\documentclass{amsart}
\usepackage{amssymb}
\usepackage{graphicx}
\usepackage{url}
 \newtheorem{theorem}{Theorem}
 \newtheorem{lemma}{Lemma}
 \newtheorem{proposition}{Proposition}
 \newtheorem{corollary}[theorem]{Corollary}
 \newtheorem{remark}[theorem]{Remark}
 \newtheorem{example}[theorem]{Example}
 \newtheorem{definition}[theorem]{Definition}
 \newtheorem{conjecture}[theorem]{Conjecture}

 \newtheorem{question}[theorem]{Question}

\newcommand{\bpr}{\begin{proof}}
\newcommand{\epr}{\end{proof}}
\newcommand{\beq}{\begin{equation}}
\newcommand{\eeq}{\end{equation}}
\newcommand{\bThm}{\begin{theorem}}
\newcommand{\eThm}{\end{theorem}}
\newcommand{\blem}{\begin{lemma}}
\newcommand{\elem}{\end{lemma}}
\newcommand{\bpro}{\begin{proposition}}
\newcommand{\epro}{\end{proposition}}
\newcommand{\bcor}{\begin{corollary}}
\newcommand{\ecor}{\end{corollary}}
\newcommand{\brem}{\begin{remark}}
\newcommand{\erem}{\end{remark}}
\newcommand{\bexa}{\begin{example}}
\newcommand{\eexa}{\end{example}}
\newcommand{\bdf}{\begin{definition}}
\newcommand{\edf}{\end{definition}}
\newcommand{\bcon}{\begin{conjecture}}
\newcommand{\econ}{\end{conjecture}}
\newcommand{\bque}{\begin{question}}
\newcommand{\eque}{\end{question}}

 \newcommand{\Z}{{\mathbb Z}}

 \newcommand{\R}{{\mathbb R}}

\newcommand{\p}{\partial}

\newcommand{\comment}[1]{}

\title{Virtual Links in Arbitrary Dimensions}
\author{Blake K. Winter}
\address{6313B W. Quaker St., Orchard Park, NY, 14127 Email: bkwinter@buffalo.edu}


\begin{document}
\thispagestyle{empty}

\begin{abstract}
We define a generalization of virtual links to arbitrary dimensions by extending the geometric definition due to Carter et al. We show that many homotopy type invariants for classical links extend to invariants of virtual links. We also define generalizations of virtual link diagrams and Gauss codes to represent virtual links, and use such diagrams to construct a combinatorial biquandle invariant for virtual $2$-links. In the case of $2$-links, we also explore generalizations of Fox-Milnor movies to the virtual case. In addition, we discuss definitions extending the notion of welded links to higher dimensions.
\end{abstract}
\keywords{Virtual knots, knot theory.}

\maketitle
%
\section{Introduction}
%

Let us begin by defining the following terms, which will be used throughout. All manifolds are taken to be smooth, unless otherwise specified. An immersed embedding, $\imath: L\hookrightarrow M$, of an $n$-manifold $L$ into an $(n+2)$-manifold $M$ will be called a \emph{$n$-link in M}. We will refer to this as simply a \emph{link} when the dimension and ambient space are implied by context. When $M=S^{n+2}$, it may also be termed a \emph{classical link}. A link for which $L$ has only one connected component will be called a \emph{$n$-knot (in $M$)}. Note that in some sources, the term $n$-knot is applied only to embeddings of the form $S^{n}\rightarrow M^{n+2}$; we use the term in the more general sense. Two links are considered equivalent if they are related by a smooth isotopy of their images. This is equivalent to requiring that their images be related by an ambient isotopy of $M$, \cite{Roseman2}. Note that this definition of links is also called the \emph{smooth category} of links. If the requirement of immersion is replaced by requiring the embedding to be piecewise-linear and locally flat, the resulting theory is called the \emph{piecewise-linear} or \emph{PL category}; if we instead require the embeddings to be merely continuous and locally flat, we obtain the \emph{topological category} or \emph{TOP}. We will always work with the smooth category unless otherwise specified.

The concept of virtual links was introduced by Kauffman, \cite{Kauff}, as a generalization of classical links. These virtual links were given geometric interpretations by Kamada, Kamada, and Carter et al., \cite{KamaKama, StableEq} and a somewhat different geometric interpretation by Kuperberg, \cite{Kuper}. Takeda, \cite{Take}, Kauffman, \cite{Kauffv}, and Schneider, \cite{Schneider}, have all introduced methods for defining virtual surface links. Our approach will be distinct from these three approaches, however, and will give a definition that applies in any dimension. Rather than using combinatorial diagrams as our starting point, we start with the geometric interpretation given by Kamada et al. as then generalize this to a definition which applies to codimension-$2$ embeddings in any dimension. Using Roseman's work on projections for links, \cite{Roseman2}, we define higher-dimensional generalizations of virtual link diagrams and Gauss codes, which allows us to study the relationship between our geometric definition and the diagrammatic, combinatorial definitions. We will also discuss some methods for generalizing welded links to higher dimensions, using the work of Satoh, \cite{SS}, as our starting point.

\section{Link Diagrams}



Every classical $1$-link can be expressed by a \emph{link diagram}, which is a combinatorial object consisting of a planar graph on a surface whose vertices are $4$-valent. At each vertex, two of the edges of the graph are marked as \emph{the overcrossing arc} and the other two are marked as the \emph{undercrossing}. These markings give information on how to resolve them in three dimensions.

Let  $\imath: L\hookrightarrow F\times [0,1]$ be a $1$-link, where $F$ is a surface (possibly with boundary). Then there are canonical projections $\pi:F\times [0,1]\rightarrow F$ and $\pi_I:F\times [0,1]\rightarrow [0,1]$. It is possible to isotope $\imath: L\hookrightarrow F\times I$ such that $\pi \imath$ is an immersion and an embedding except for a finite number of double points called \emph{crossings}. 
In order to recover $L$, at each double point, we mark which strand projects under $\pi_I$ to the smaller coordinate in $[0,1]$ at that double point; this is the undercrossing. These marks allow us to recover $L$ up to ambient isotopy. Observe that $\pi(L)$ is a $4$-valent graph. The interiors of the edges of this graph will be called \emph{semi-arcs} of the link diagram. Let $D_{-}$ be the set $\{x\in L | \pi^{-1}(\pi(x))=\{x, y\}, x \neq y, \pi_I(x)<\pi_I(y)\}$. For any connected component $A$ of $L-D_-$, $\pi(A)$ is called an \emph{arc} of the link diagram.

For diagrams of $1$-links, there exists a set of three moves, the \emph{Reidemeister moves}, which satisfy the following condition: if $L$ and $L'$ are ambient isotopic links in $F\times [0,1]$, then the diagrams of $L$ and $L'$ on $F$ differ by a finite sequence of ambient isotopies of $F$ and Reidemeister moves. Thus, questions about $1$-links may be translated into questions about equivalence classes of link diagrams, with the equivalence relation being generated by the Reidemeister moves.

\begin{figure}
		\centering
			\includegraphics[scale=0.3]{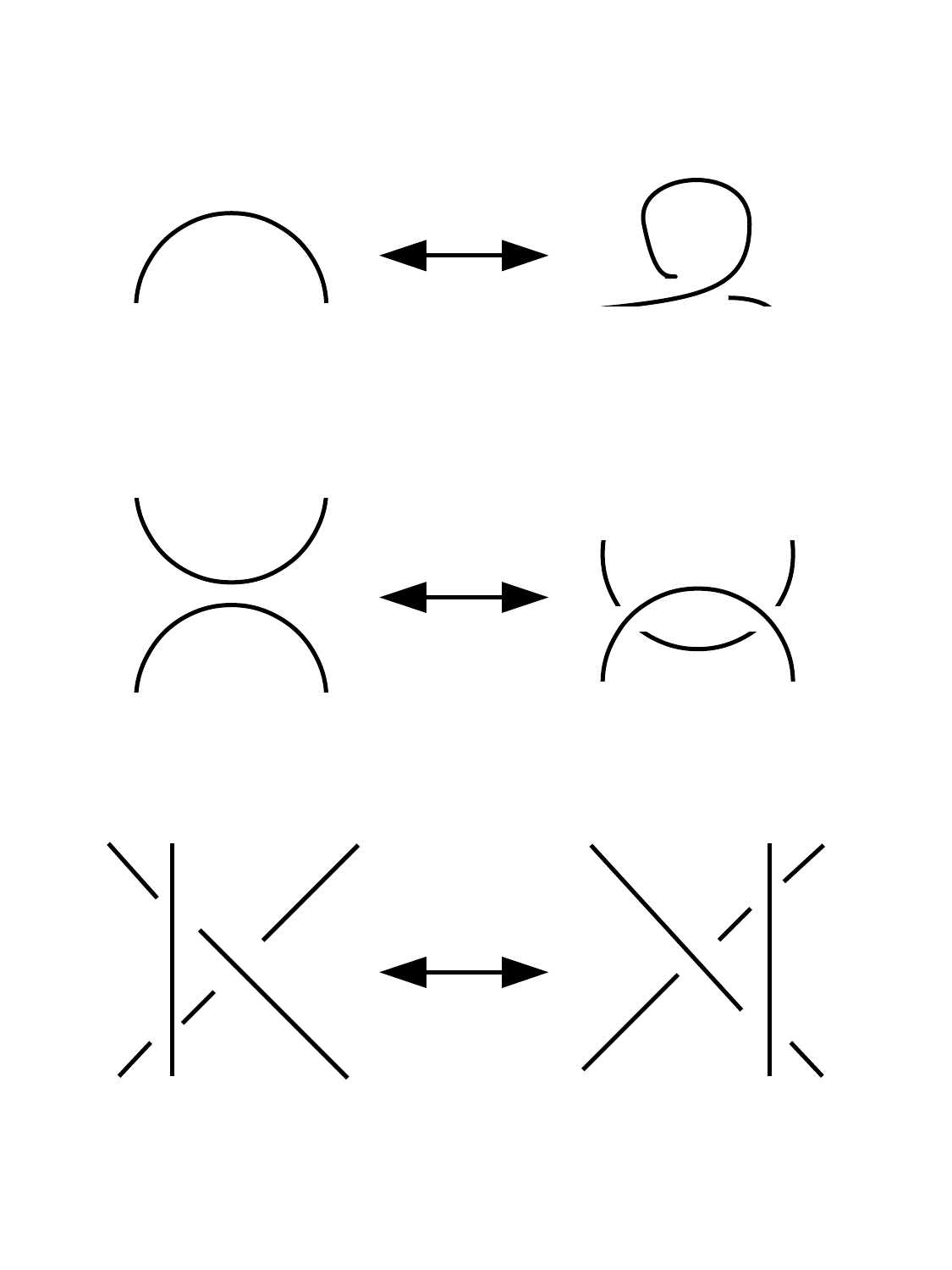}
		\caption{The three Reidemeister moves for diagrams of $1$-links.}
		\label{Reidemeister}
\end{figure}


Roseman, \cite{Roseman, Roseman1, Roseman2}, has generalized the diagram presentation of $1$-links to obtain an analogous presentation of $n$-links. In brief, it is possible to represent an arbitrary $n$-link $L$ in the manifold $F^{n+1}\times [0,1]$ by looking at the canonical projection of $L$ to $F$. Furthermore, it is possible to modify $L$ by an arbitrarily small isotopy such that the projection will be an immersion on an open dense subspace of $L$. By marking the double points of the projection with over and under information, Roseman obtains a generalization of link diagrams which is applicable to any dimension. Roseman also showed that any two link diagrams of ambient isotopic links will be related by a finite sequence of ambient isotopies of $F$ and certain local changes which leave the diagram unchanged outside a disk $D^{n+1}\subset F$. We will first give the full technical definitions necessary. Then we will examine some examples for $2$-links.

Let $L$ be an $n$-link in the manifold $F^{n+1}\times [0,1]$. If we wish to deal with links in $S^{n+2}$, we may form a diagram by isotoping the link to lie in some $\R^{n+1}\times [0,1]\subset S^{n+2}$; two links are isotopic in this $S^{n+1}\times [0,1]$ subspace iff they are isotopic in $S^{n+2}$.

The \emph{crossing set} of a link $L\subset F^{n+1}\times [0,1]$, denoted by $D^{*}$, is the closure of the set $\left\{x\in \pi(L), \left|\pi^{-1}(x)\right|\geq 2\right\}$. The \emph{double point set} $D$ is defined as $\pi^{-1}(D^{*})$. The \emph{branch set} $B$ is the subset of $L$ on which $\pi$ fails to be an immersion. The \emph{pure double point set} $D_{0}$ is defined to be the subset of $D$ consisting of those points $x\in L$ such that $\pi^{-1}(\pi(x))$ consists of exactly two points. We also define the \emph{(pure) overcrossing set} $D_{+}$ to be the set containing all $x\in D_{0}$ such that of the two $x, y\in\pi^{-1}(\pi(x))$, $\pi_I(y)<\pi_I(x)$, and the \emph{(pure) undercrossing set} $D_{-}=D_{0}-D_{+}$.

\begin{definition}\emph{(}\cite{Roseman1}\emph{)}
An immersion $\phi:L\rightarrow F$ is said to have \emph{normal crossings} if at any point of self-intersection, the pushforwards of the tangent spaces, $\phi_*:TL\rightarrow TF$, meet one another in general position.
\end{definition}

\begin{definition}

$L$ is in \emph{general position} (with respect to $\pi$) iff the following conditions hold.

\begin{enumerate}
	\item $B$ is a closed $(n-2)$ dimensional submanifold of $L$.

\item $D$ is a union of immersed and closed $(n-1)$-dimensional submanifolds of $L$ with normal crossings. The set of points where normal crossings occur will be labeled $N$ and called the \emph{self-crossing set of $D$}.

\item $B$ is a submanifold of $D$, and every $b\in B$ has an open disk neighborhood $U\subseteq D$, such that $U-B$ has two components $U_{0}$, $U_{1}$, which $\pi$ embeds into $F$ such that $\pi(U_{0})=\pi(U_{1})$.

\item $B$ meets $N$ transversely.

\item The restriction of $\pi$ to $B$ is an immersion of $B$ with normal crossings.

\item The crossing set of $\pi(B)$ is transverse to the crossing set of $\pi(L)$.

\end{enumerate}

\end{definition}

Note that this definition is taken from \cite{PR}, rather than Roseman's original version; the two definitions are equivalent.

This somewhat technical definition is important because of Theorem \ref{Rthm}, which says that we can always put an $n$-link into general position. Furthermore, Roseman has given a finite collection of moves, analogous to the Reidemeister moves, which relate any two diagrams of isotopic $n$-links, \cite{Roseman2}.

\begin{theorem}\emph{(}\cite{Roseman2}, \cite[Theorem 6.2, Lemma 7.1]{PR}\emph{)}\\
Every $n$-link in $F\times [0,1]$ can be isotoped into general position with respect to the projection $\pi :F\times [0,1]\rightarrow F$. \label{Rthm}
\end{theorem}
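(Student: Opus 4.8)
The plan is to realize general position by an arbitrarily small generic perturbation of the embedding and then to promote that perturbation to an ambient isotopy of $F\times[0,1]$.

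First I would reduce to a perturbation problem for the embedding itself. The space of embeddings of $L$ into $F\times[0,1]$ is open in the space of smooth maps (in the strong Whitney topology), and by the isotopy extension theorem any embedding sufficiently close to $\imath$ is the image of $\imath$ under an ambient isotopy; if $L$ or $F$ is noncompact one runs the usual argument over a compact exhaustion, keeping the perturbations locally finite. Writing $\imath=(\imath_F,\imath_I)$ with $\imath_F=\pi\circ\imath$ and $\imath_I=\pi_I\circ\imath$, note that perturbing only $\imath_F$ and leaving $\imath_I$ fixed still yields an embedding (for small perturbations) and changes $\pi\circ\imath$ by an essentially arbitrary small amount; so it suffices to find a map $g$ close to $\pi\circ\imath$, inside the (open, nonempty) set of maps for which $\imath'=(g,\imath_I)$ is an embedding, so that $\imath'$ realizes conditions (1)--(6). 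A useful feature of this constrained setting is that $dg_p=d\pi\circ d\imath'_p$ always has at most one-dimensional kernel, since $d\imath'_p$ is injective and $\ker d\pi$ is a line; thus the corank-$\ge 2$ singularities that would obstruct condition (1) simply never occur for maps of this form, so no dimension restriction on $n$ is needed.

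Next I would translate conditions (1)--(6) into transversality conditions on jet and multijet extensions of $g$. Condition (1), that the branch set $B$ (the singular locus of $g$) is a closed $(n-2)$-submanifold, becomes transversality of $j^{1}g\colon L\to J^{1}(L,F)$ to the Thom--Boardman stratum $\Sigma^{1}$; since matrices of size $(n+1)\times n$ of rank $\le n-1$ form a determinantal variety of codimension $2$, a generic such $g$ has $B$ a closed submanifold of dimension $n-2$. Condition (2), that the double-point set $D$ is a union of immersed closed $(n-1)$-submanifolds with normal crossings (with $N$ the locus of those crossings), becomes a multijet-transversality statement: the $k$-fold $1$-jet $j^{1}_{k}g$ should be transverse to the strata of $J^{1}_{k}(L,F)$ cutting out ``$k$ sheets through a common image point'', which for a codimension-one map meet in the expected dimension $n-(k-1)$ --- giving $D$ of dimension $n-1$, $N$ (two sheets of $D$ crossing) of dimension $n-2$, and triple and higher points of strictly lower dimension. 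Conditions (3)--(6) --- that $B\subset D$ with the two-sheet local picture described there, that $B$ meets $N$ transversely, that $\pi|_{B}$ is an immersion with normal crossings, and that the crossing set of $\pi(B)$ is transverse to that of $\pi(L)$ --- are each again transversality of a suitable (multi)jet of $g$ to a suitable stratified subset, obtained by intersecting the relevant jet strata and invoking the stratified multijet transversality theorem. The interface between $B$ and $D$ in condition (3) is best handled by working with a multijet bundle blown up along the diagonal, so that a branch point is recognized as a limit of double points.

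Finally I would assemble the pieces. Each of conditions (1)--(6) holds for $g$ in a residual set (open and dense once the compact exhaustions are fixed) of the relevant mapping space, and there are only finitely many conditions, so by the Baire category theorem their intersection is residual, hence dense, hence meets every neighbourhood of $\pi\circ\imath$ and in particular the open set of maps compatible with the fixed $\imath_{I}$; choosing such a $g$, setting $\imath'=(g,\imath_{I})$, and applying isotopy extension exhibits $\imath'$ as an ambient isotope of $\imath$ in general position. I expect the difficulty to be bookkeeping rather than ideas: since the sets $B$ and $N$ figuring in conditions (3)--(6) are not chosen independently but are determined by $g$, one must set up the Thom--Boardman and multijet strata carefully enough that all six conditions fall out of transversality to a single stratified set --- this is essentially the content of the transversality lemma cited from \cite{PR}, and getting the jet orders and strata right there is the main obstacle.
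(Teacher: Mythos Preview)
The paper does not supply its own proof of this theorem: it simply cites \cite{Roseman2} for the case $F=\R^{n+1}$ and \cite[Lemma~7.1]{PR} for the extension to general $F$, so there is no in-paper argument to compare against. Your sketch is the standard transversality-theoretic route that those references take --- perturb the $F$-component of the embedding, encode each clause of the definition of general position as transversality of an appropriate (multi)jet extension to a Thom--Boardman or diagonal-type stratum, intersect the resulting residual sets, and then invoke isotopy extension --- and is sound in outline.

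Two small cautions worth flagging before you write this up formally. First, your observation that $\ker dg_p$ is at most one-dimensional depends on $(g,\imath_I)$ already being an immersion; you use openness of embeddings to guarantee this for nearby $g$, which is fine, but be explicit that the jet-transversality theorem is being applied in the full space $C^\infty(L,F)$ and then intersected with the open set $\{g:(g,\imath_I)\text{ is an embedding}\}$, rather than in some constrained mapping space where Thom transversality is not immediately available. Second, as you yourself note, clauses (3)--(6) concern the mutual position of sets ($B$, $N$, $D$) that are themselves determined by $g$, so the correct formalization requires either a stratified multijet bundle in which these loci appear as fibres of a single stratification, or an iterated argument in which one first makes $B$ and $D$ manifolds and then perturbs relative to them; the cited Lemma~7.1 of \cite{PR} packages exactly this, and you should either reproduce that packaging or cite it rather than leave the bookkeeping implicit.
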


Note that in \cite{Roseman2}, Theorem \ref{Rthm} is proved in the case of links in $\R^{n+1}\times [0,1]$ only. However, by Lemma 7.1 in \cite{PR}, it is true for links in $F\times [0,1]$ as well.

\begin{definition}
A link diagram $d$ for an $n$-link $L\subset F\times [0,1]$, in general position with respect to $\pi$, is the projection $\pi(L)$, such that at each point of crossing set whose preimage consists of exactly two points, there are markings to determine which point in the preimage has the larger coordinate in $[0,1]$.
\end{definition}

Note that by the above theorem every $n$-link in $F\times [0,1]$ can be represented by a link diagram $D$ on $F$. It is easy to see that a link represented by $d$ is unique. 

Note that in the case $n=1$, this definition reduces to the usual definition of a link diagram. A $2$-link diagram will be a surface in a $3$-manifold $M$ with double point curves, which can meet in isolated triple points and terminate in isolated branch points. Over and under information can be given, as in the $n=1$ case, by breaking one of the surfaces in the neighborhood of a double point curve. A $2$-link diagram is for this reason also called a \emph{broken surface diagram}.

Roseman also shows, in \cite[Prop. 2.9]{Roseman2}, that, for any dimension $n$, two link diagrams for links which are ambient isotopic in $F\times [0,1]$ differ by a finite sequence of ambient isotopies of $\pi(L)\subset F$ and certain \emph{Roseman moves}. Each Roseman move changes the link diagram only within some open disk in $F\times [0,1].$ The set of Roseman moves for fixed $n$ is finite, although the number of moves increases with $n$. In the case $n=1$, the Roseman moves reduce to the standard Reidemeister moves, shown in Fig. \ref{Reidemeister}. In the case $n=2$, the Roseman moves can be given simple graphical representations in terms of broken surface diagrams. For higher $n$, Roseman gives a local model for each Roseman move, but does not give an explicit enumeration of moves. We will return to these representations in our discussion of diagrams for virtual links.

\begin{definition}
In a link diagram $d$, given by $\pi(L)$, for $L\subset F\times [0,1]$, the connected components of $\pi(L)-\pi(D)$ will be termed the sheets of the diagram.
\end{definition}

For $1$-links, a sheet is the same as a semi-arc. In addition, sometimes the term \emph{sheet} may be used to refer to the preimages in $L$ of the sheets in the diagram. When there is ambiguity, we will specify whether we are referring to a sheet in the diagram on $F$ or a sheet in $L$.

\begin{definition}
Let $L\subset F\times [0,1]$ be in general position with respect to $\pi:F\times [0,1]\rightarrow F$. The connected components of $(L-D)\cup D_+$ will be termed the \emph{faces} of the diagram. The images of these components under $\pi$ may also be called faces depending upon context.
\end{definition}

The faces of a $1$-link diagram are its arcs.

\section{Review of Virtual and Welded Links for $n=1$}

Virtual links were introduced by Kauffman, \cite{Kauff}, as a generalization of classical $1$-links. A link diagram gives rise to a combinatorial \emph{Gauss code}.

Since a link diagram is inherently a graph on a surface $F$, each link diagram gives rise to a \emph{Gauss code}. There are various ways of defining Gauss codes for links. All of them specify the set of arcs in the diagram, and, for each crossing, the two arcs which terminate at that crossing are specified, together with the order in which these crossings occur on their overcrossing arcs. Concretely, a Gauss diagram for $L=S^1\cup ...\cup S^1\xrightarrow{\imath} F\times I \xrightarrow{\pi} F$ is $L$ together with a collection of triples $(x,y,z)$, one for each vertex $v$ of the diagram $\pi\imath(L),$ where $x,y$ are preimages of $v$ under $\pi$ and $z=\pm 1$, depending on whether $y$ is over $x$ (i.e. $\pi_I(y)>\pi_I(x)$) or not.

However, not every Gauss code corresponds to a link diagram on $\R^2$. A Gauss code which corresponds to a link diagram on $S^2$ is termed \emph{realizable}. One method for defining virtual links is to consider arbitrary Gauss codes, modulo local changes corresponding to Reidemeister moves. These moves for Gauss codes are illustrated in Fig. \ref{gaussreid}. 

\begin{figure}
		\centering
			\includegraphics[scale=0.3]{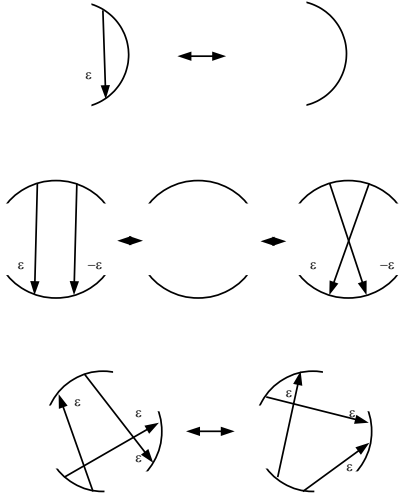}
		\caption{The effects of the three Reidemeister moves on a Gauss code. Here $\epsilon$ denotes the sign of the crossing and $-\epsilon$ denotes a crossing of the opposite sign.}
		\label{gaussreid}
\end{figure}

Such Gauss codes may also be represented using \emph{virtual link diagrams}. A virtual link diagram is a link diagram where we allow arcs to cross one another in \emph{virtual} crossings, indicated in the diagram by a crossing marked with a little
circle. All the Reidemeister moves are permitted on such diagrams, and any arc segment with only virtual crossings may be replaced by a different arc segment with only virtual crossings, provided the endpoints remain the same. This is equivalent to allowing the additional moves shown in Fig. \ref{vReidemeister}.

\begin{figure}
		\centering
			\includegraphics[scale=0.5]{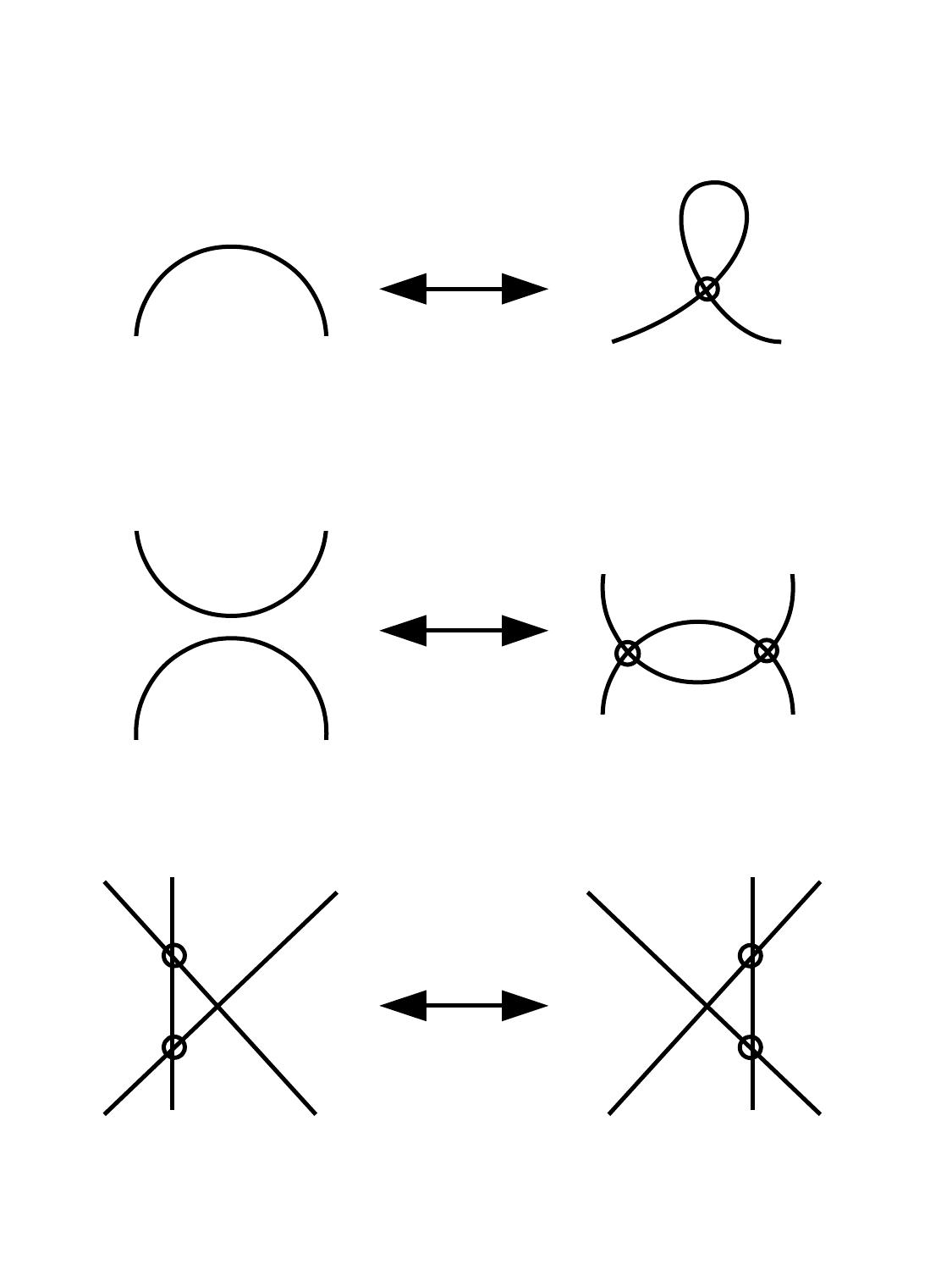}
		\caption{The additional virtual Reidemeister moves. Virtual crossings are indicated by crossings with small circles placed on them. The unmarked crossing in the third move can be either a virtual crossing or an arbitrary classical crossing.}
		\label{vReidemeister}
\end{figure}

Virtual links share many similarities with classical ones. For example, one can define virtual link groups and peripheral subgroups of virtual knot groups, cf. Sec. \ref{s-geom-def-virt}.
Furthermore, polynomial invariants of links, like the Alexander, Jones, and Homfly-pt polynomials, extend to virtual links.




Since the group and peripheral structure are invariants of virtual knots, by a result of Gorden and Luecke, \cite{GL} along with a theorem of Waldhausen, \cite{Wald}, two classical knots which are virtually equivalent must be classically equivalent as well. It can be shown, however, that not every virtual link is virtually equivalent to a classical link. We will discuss such an example in Section \ref{QaB}.

Welded links were first studied in the context of braid groups and automorphisms of quandles by Fenn, Rimanyi, and Rourke,	\cite{FRR}. For the $n=1$ case, welded links are virtual links considered modulo the so-called ``forbidden move'' (as it is a forbidden move for virtual links), shown in Fig. \ref{forbidden}. There is a second possible forbidden move for virtual links, in which the two classical crossings in Fig. \ref{forbidden} are switched. This second forbidden move, however, is not allowed for welded links either. The link group, and the peripheral structure, can be extended to invariants of welded links. In particular the link group is computed from a welded link diagram using the same algorithm as for virtual or classical link diagrams. Consequently, just as in the virtual case, any two classical knots which are welded equivalent must be classically equivalent as well. There are examples of welded knots which are not welded equivalent to classical links, however. For example, there are welded knots whose fundamental groups are not infinite cyclic, but whose longitudes are trivial. Examples of such welded knots are easily constructed using a consequence of the work of Satoh, \cite{SS}. Satoh has shown that every ribbon embedding of a torus in $S^4$ can be represented by a welded knot, whose fundamental group and peripheral subgroup are isomorphic to the fundamental group and peripheral subgroup of the torus. Given any nontrivial ribbon embedding of $S^2$ into $S^4$, we may take the connected sum of this surface knot with a torus forming the boundary of a genus-$1$ handlebody in $S^4$. The resulting knotted torus will be ribbon and have a trivial longitude. It follows from Satoh's correspondence between welded knots and ribbon embeddings of tori in $S^4$ that a welded knot which represents this knotted torus will be nontrivial, but will have a trivial longitude. Diagrams of such welded knots will also provide examples of virtual knots which are not virtually equivalent to classical knots, since they have nontrivial group but trivial longitude.

\begin{figure}
		\centering
			\includegraphics[scale=0.5]{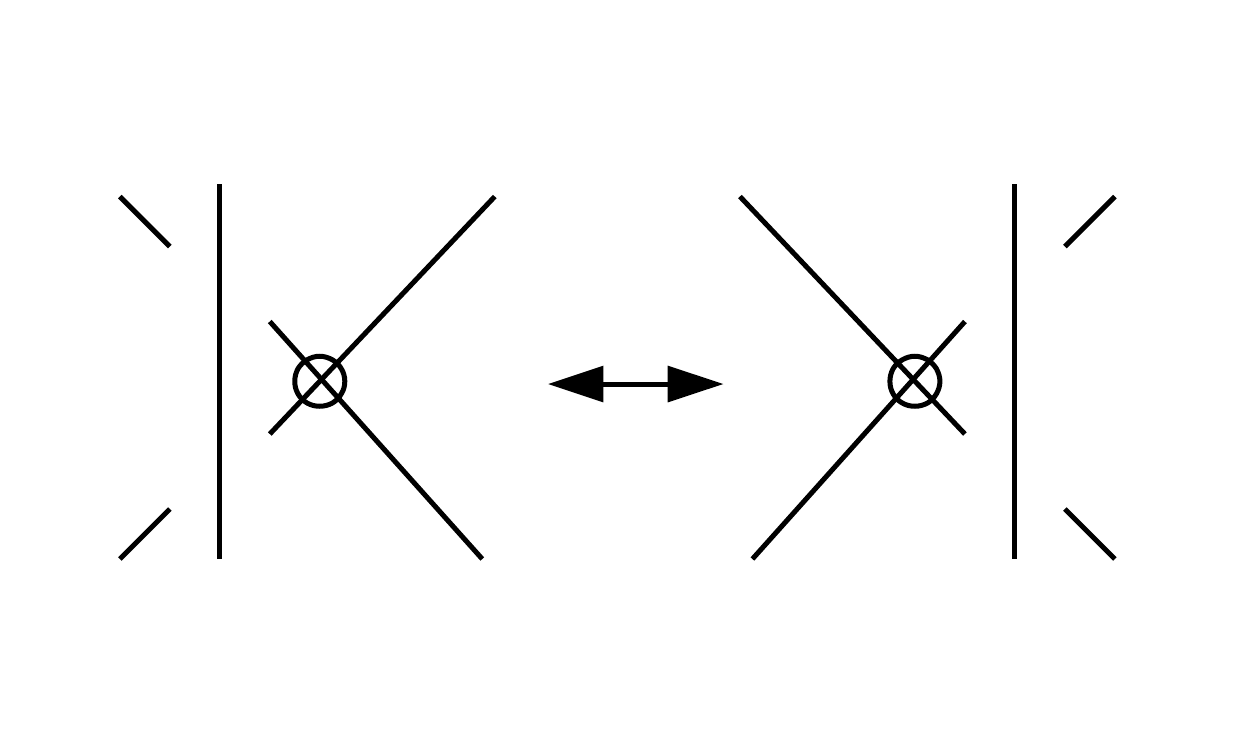}
		\caption{The ``forbidden move,'' which is not allowed for virtual $1$-links but is permitted for welded $1$-links.}
		\label{forbidden}
\end{figure}

As we will see, the fact that every welded $1$-link diagram is also a virtual $1$-link diagram is unique to the case $n=1$; for larger $n$, the geometrically-motivated definition of a welded link, which we will give, does not reduce to the notion of a virtual link with an additional move permitted.

\section{Quandles and Biquandles}\label{QaB}

Quandles and biquandles are nonassociative algebraic structures which provide invariants for classical and virtual links. As we will show, they also provide invariants for our generalization of virtual links to higher dimensions. Since we will be referring to these structures repeatedly, we list their definitions here for reference.

A \emph{quandle}, \cite{DJ}, (also called a \emph{distributive groupoid} in \cite{Mat}) $(Q, *)$ consists of a set $Q$ and a binary operation $*$ on $Q$ such that for all $a, b, c \in Q$, the following equalities hold.

\begin{enumerate}
	\item $a*a=a$.

\item There is a unique $x\in Q$ such that $x*b=a$.

\item $(a*b)*c=(a*c)*(b*c)$.

\end{enumerate}
When the operation $*$ is implied, we may denote the quandle by $Q$, suppressing the operation.

\begin{example}
For any group $G$, $(G,*)$ is a quandle, for $a*b=a^{b},$ (i.e. $bab^{-1}$). We call $(G,*)$ the group quandle of the group $G.$
\end{example}

It is easily checked that this defines a forgetful functor from the category of quandles to the category of groups.

In general, it is common to denote the $*$ operation in any quandle $Q$ by using the conjugation notation, that is, for any quandle we may define $a^{b}=a*b$. Analogously, we will denote the element $x$ stipulated by property (2) by $a^{\bar{b}}$, i.e. $(a^b)^{\bar{b}}=a$. By convention, we interpret $a^{bc}=(a^b)^c$. In Section \ref{geoqu} we review Joyce's geometric definition of the fundamental quandle of the complement of any $n$-link.

While a quandle can be thought as a set with two binary operations, $a^b$ and $a^{\bar b},$ a \emph{(strong) biquandle}, \cite{BQ1, biq, Carrell}, is a set $B$ together with four binary operations, often denoted using exponent notation as $a^{b}$, $a_{b}$, $a^{\bar{b}}$, $a_{\bar{b}}$. As for quandles, these operations are assumed to be associated from left to right unless otherwise specified with parentheses, for example, $a^{bc}=(a^b)^c$. These operations are required to satisfy the following properties:

\begin{enumerate}
\item For any fixed $b$, the functions sending $b$ to $a^{b}$, $a_{b}$, $a^{\bar{b}}$, $a_{\bar{b}}$ are bijective functions on $B$ with an argument $a$ (this implies all four operations have right inverses).

\item For any $a, b, c \in B$, $c=a_{c}$ iff $a=c^{a}$, and $b=a^{\bar{b}}$ iff $a=b_{\bar{a}}$.

\item The following equalities hold for all $a, b, c \in B$:
$a=a^{b\overline{b_{a}}}$,
$b=b_{a\overline{a^{b}}}$,
$a=a^{\overline{b}b_{\overline{a}}}$,
$b=b_{\overline{a}a^{\overline{b}}}$,
$a^{bc}=a^{c_{b}b^{c}}$,
$c_{ba}=c_{a^{b}b_{a}}$,
$(b_{a})^{c_{a^{b}}}=(b^{c})_{a^{c_{b}}}$,
$(b_{\overline{a}})^{\overline{c_{\overline{a^{\overline{b}}}}}}=(b^{\overline{c}})_{\overline{a^{\overline{c_{\overline{b}}}}}}$,
$a^{\overline{bc}}=a^{\overline{c_{\overline{b}}b^{\overline{c}}}}$,
$c_{\overline{ba}}=c_{\overline{a^{\overline{b}}b_{\overline{a}}}}$.
\end{enumerate}



Every quandle $(Q, *)$ can be made into a biquandle by defining the biquandle operations to be $a^b=a*b$, $a^{\overline{b}}=a*^{-1}b$, $a_b=a_{\overline{b}}=a$. Another family of examples may be constructed by taking $B$ to be a module over $\Z [s, t, s^{-1},t^{-1}]$. Then we define biquandle operations on $B$ to be $a^b=ta+(1-st)b$, $a^{\overline{b}}=t^{-1}a+(1-s^{-1}t^{-1})b$, $a_b=sa$, and $a_{\overline{b}}=s^{-1}a$. Then $B$ is a biquandle called an \emph{Alexander biquandle}, \cite{NelLam}.

Although the axioms for a biquandle appear somewhat abstract, they are all motivated by considering labelings of semi-arcs in diagrams for virtual links or labelings of sheets of broken surface diagrams for surfaces embedded in $S^{4}$, and then requiring that the resulting algebraic structure must be invariant under the Reidemeister or $2$-dimensional Roseman moves.

We may also define \emph{weak} biquandles by dropping the first axiom (i.e. the axiom that each biquandle operation has a right inverse). However, for our purposes, we will always use the term ``biquandle'' to indicate a strong biquandle, unless otherwise noted.

There are also other conventions for denoting the biquandle operations; we have followed Carrell's notation, \cite{Carrell}, since that source discusses applications of biquandles to knotted surfaces. Biquandles can also be defined as sets $Q$ with two right-invertible operations which satisfy the following identities (see \cite{Stan} for a discussion of these): $a_{(bc)}=a_{c^{b}b_c}$, $a^{(bc)}=a^{c_{b}b^c}$, $(a_b)^{c_{b^a}}=(a^c)_{b^{c_a}}$, $(a_{\overline{a}})^{\overline{a_{\overline{a}}}}=a$.

Biquandles have been found to be useful for distinguishing virtual $1$-knots that are indistinguishable by quandles. For example, the Kishino knot, Fig. \ref{KI}, is a virtual knot whose quandle is isomorphic to the quandle of the unknot. However, it can be distinguished from the unknot using the biquandle, \cite{Kauff2, BF}.

\begin{figure}
		\centering
			\includegraphics[scale=0.5]{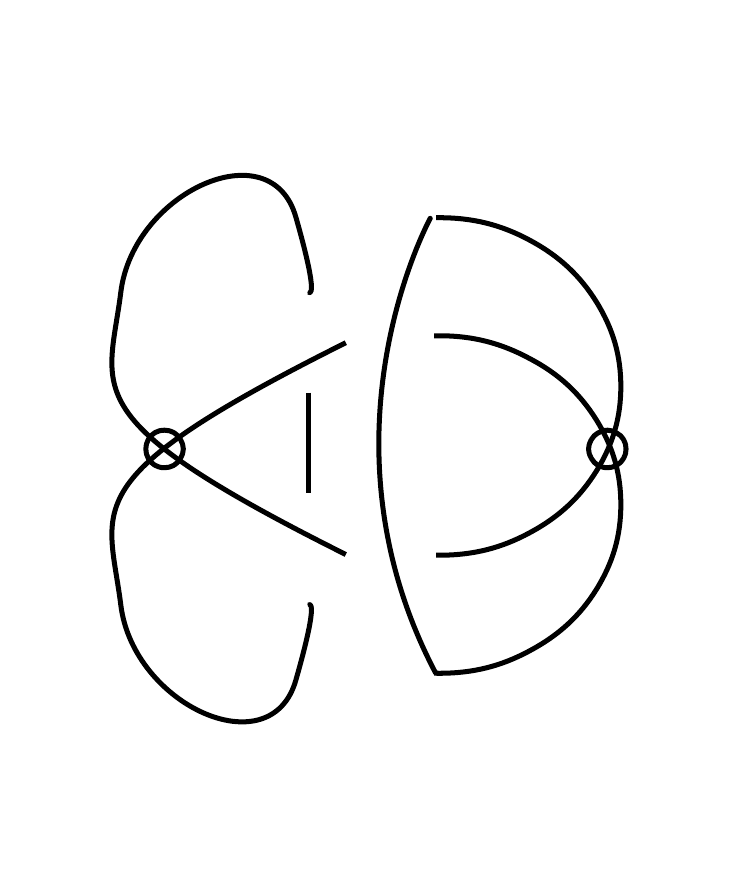}
		\caption{A virtual knot diagram of the Kishino knot.}
		\label{KI}
\end{figure}
%
\section{The Link Quandle and Group in Arbitrary Dimensions}
\label{geoqu}
%

Quandles as knot invariants were introduced in \cite{DJ, Mat}.
In this section we discuss the geometric definition of the fundamental quandle of an $n$-knot, originally given by Joyce in \cite{DJ}. In that paper, it was shown that the quandle of a classical knot is determined by its knot group together with the specification of a meridian and the peripheral subgroup. We will show that this is true for arbitrary (orientable) knots in spheres, as well as for virtual $n$-knots.


Let $L$ be a link in $M$, with $L$ and $M$ assumed to be orientable, and let $N(L)$ be an open tubular neighborhood of $L$. Then $M-N(L)$ is a manifold with boundary containing $\partial \overline{N(L)}$. The fundamental group of this space is called the \emph{link group}. We will follow the convention that the product of two homotopy classes of paths goes from right to left, i.e. $\gamma' \gamma$ is the homotopy class of a path which follows $\gamma$ and then $\gamma'$.
We call the image $P$ of the homomorphism induced
by the injection $\partial \overline{N(L)}\rightarrow M-N(L)$ the \emph{peripheral} subgroup of $\pi_1(M-L).$ Note that $P$ is defined only up to conjugation;
any of its conjugates are also peripheral subgroups.

A meridian $m$ of $L$ is the boundary of the disk fibre of the tubular neighborhood of $L$ considered as a disk bundle over $L$. Note that the orientations of $M$ and of $L$ determine an orientation of $m$. A meridian is defined uniquely up to conjugation.

\begin{figure}[htbp]
\begin{centering}
\includegraphics[scale=.8]{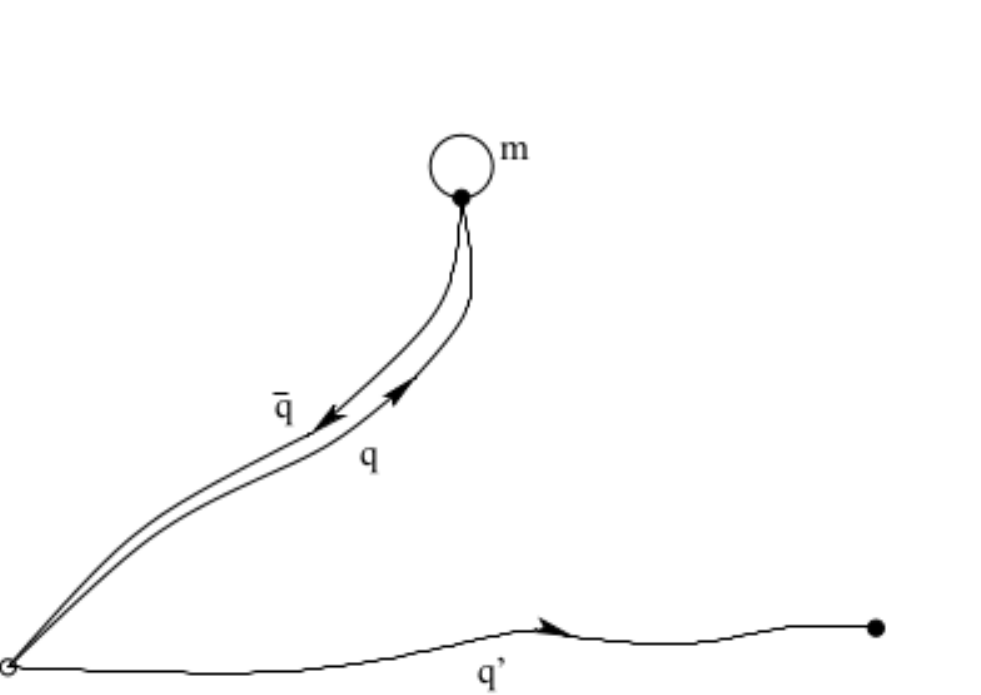}
\caption{The quandle operation for the fundamental quandle of a link $L$ in any space $M$. The
open circle indicates the basepoint of $M$, while the black circles represent points on the boundary of a tubular neighborhood of $L$.}
\label{qop}
\end{centering}
\end{figure}

The definition of the quandle, and of the fundamental group of a link complement, requires concatenation of paths. We will use the following convention for path concatenation: given two paths $\gamma, \gamma'$, we will write $\gamma' \gamma$ to indicate the path that follows $\gamma$ first, and then follows $\gamma'$. This convention makes it easier to discuss the action of the link group on the quandle.

We define the quandle $Q(M,L)$ associated to the pair $(M,L)$ and a chosen base point $b\in M-N(L)$ as follows. The elements of $Q(M,L)$ are homotopy classes of paths in $M-N(L)$ that start at $b$ and end in $\partial \overline{N(L)}$, where homotopies are required to preserve these two conditions. The quandle operation, $[q]^{[q']}$, between two elements $[q], [q']\in Q(M,L)$ is defined as the homotopy class of $q\overline{q'}mq'$, where the overbar indicates that the path is followed in reverse and $m$ is the meridian of $L$ passing through the endpoint of $q$, cf.  \ref{qop}. It is a straightforward exercise to show that this binary operation is well-defined and it satisfies the definition of a quandle operation. We call $Q(M,L)$ the \emph{link quandle} of $L$ (or the knot quandle if $L$ is a knot).

Note that there is a right action of $\pi_{1}(M-N(L))$ on $Q(M,L)$: $[q]g$ is the equivalence class including the homotopy class of the path $qg$.

\section{Quandles From Groups}
Given a group $G$, a subgroup $P$, and an element $m$ in the center, $Z(P),$ of $P$, Joyce defines a quandle operation on the set of right cosets of $P$, $P\backslash G$, as follows: $Pg^{Ph}=P(gh^{-1}mh)$. This operation is well-defined, because if $h'\in Ph$, then $h'=ph$ for some $p\in P$ and
\begin{equation}
  h'^{-1}mh'=h^{-1}p^{-1}mph=h^{-1}mh,
\end{equation}
since $m\in Z(P)$.
We will denote a quandle constructed in this way
by $(P\backslash G,m)$, or simply $P\backslash G$ when there is a canonical choice for $m$, for example,
when $m$ is a meridian and $P$ a peripheral subgroup of a knot.

\section{Group Actions on Quandles}
The results in this section are due to Joyce, \cite{DJ}.


For any link $L$ in $M$, $\pi _{1}(M-L)$ acts on $Q(M,L)$ by setting $[q]g$ to be the homotopy class of the path $qg$.

Consider now a knot $K\subset M$ and a path $m_Q$ connecting a fixed base point $b\in M-N(K)$ with $\p N(K).$  (Clearly, $m_Q\in Q(M,K).$)
Note that $m_Q$ defines a peripheral subgroup $P$ of $\pi_1(M-N(K),b).$
Its elements are of the form $\overline{m_Q}\alpha m_Q,$ where $\alpha$ is any loop in $\p N(K)$ based at the endpoint of $m_Q.$ In particular, $m_Q$ determines a meridian $m\in P.$


\begin{lemma}\label{G-act-on-Q}
For any knot $K\subset M$,\\
(1) the $\pi_{1}(M-K)$ action on $Q(M,K)$ is transitive, i.e. every element of $Q(M,K)$ is equal to $m_{Q}g$ for some $g\in \pi_{1}(M-K)$.\\
(2) $P$ is the stabilizer of the $\pi_{1}(M-K)$ action on $m_Q\in Q(M,K),$
i.e. $m_Qg=m_Q$ iff $g\in P.$
\end{lemma}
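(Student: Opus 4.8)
The plan is to work directly with the geometric description of $Q(M,K)$ as homotopy classes of paths from the basepoint $b$ to $\partial\overline{N(K)}$, and of the action $[q]g$ as concatenation of a loop $g$ at $b$ with the path $q$. For part~(1), let $[q]\in Q(M,K)$ be arbitrary. Both $q$ and $m_Q$ are paths from $b$ into $\partial\overline{N(K)}$; the natural candidate for $g$ is the loop $\overline{q}\,\delta\, m_Q$, where $\delta$ is a path \emph{inside} $\partial\overline{N(K)}$ joining the endpoint of $m_Q$ to the endpoint of $q$. The point is that $\partial\overline{N(K)}$ is path-connected (it is a circle bundle over the connected manifold $K$, $K$ being a knot, hence connected), so such a $\delta$ exists. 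Then $m_Q g$ is represented by $q\,\overline{\delta}\,\overline{m_Q}\,m_Q \simeq q\,\overline{\delta}$, and since $\overline\delta$ lies in $\partial\overline{N(K)}$, pushing the endpoint back along $\overline\delta$ through the boundary is precisely a homotopy of the allowed type (endpoint free to move in $\partial\overline{N(K)}$, startpoint fixed at $b$). Hence $m_Q g = [q]$, proving transitivity. I would spell out the concatenation convention ($\gamma'\gamma$ = follow $\gamma$ then $\gamma'$) carefully here, since the order matters for the cancellation $\overline{m_Q}m_Q\simeq \mathrm{const}$.

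For part~(2), recall $P$ consists of the classes $\overline{m_Q}\,\alpha\, m_Q$ with $\alpha$ a loop in $\partial\overline{N(K)}$ based at the endpoint of $m_Q$. If $g=\overline{m_Q}\,\alpha\, m_Q\in P$, then $m_Q g$ is represented by $m_Q\,\overline{m_Q}\,\alpha\, m_Q \simeq \alpha\, m_Q$; since $\alpha$ is a loop in $\partial\overline{N(K)}$, we may homotope the path $\alpha m_Q$ back to $m_Q$ by sliding its terminal portion, with the endpoint moving within $\partial\overline{N(K)}$ — again an allowed homotopy. So $m_Q g = m_Q$, i.e. $P$ is contained in the stabilizer. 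Conversely, suppose $m_Q g = m_Q$ for a loop $g$ at $b$. Then $m_Q g \simeq m_Q$ by a homotopy fixing the start at $b$ and keeping the end in $\partial\overline{N(K)}$; tracking the endpoint during that homotopy yields a path $\beta$ in $\partial\overline{N(K)}$ from the endpoint of $m_Q$ to itself, i.e. a loop, such that $m_Q g \simeq \beta m_Q$ \emph{rel endpoints}. Composing with $\overline{m_Q}$ on the left gives $g \simeq \overline{m_Q}\,\beta\, m_Q$ as loops at $b$, which is exactly an element of $P$. Hence the stabilizer is contained in $P$, and the two are equal.

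The main obstacle I anticipate is the bookkeeping in the converse direction of (2): turning an ambient homotopy "$m_Q g\simeq m_Q$ with endpoint free in $\partial\overline{N(K)}$" into a \emph{rel-endpoint} homotopy "$m_Q g\simeq \beta m_Q$" for a genuine loop $\beta$ in the boundary. This is a standard path-space/fibration-type argument (the evaluation-at-endpoint map on the relevant path space restricts to $\partial\overline{N(K)}$), but it needs to be stated cleanly — essentially, the trace of the free endpoint \emph{is} the loop $\beta$, and concatenating with its reverse converts the free homotopy into a based one. Everything else is routine manipulation with the concatenation convention plus the connectivity of $\partial\overline{N(K)}$, which holds because $K$ is a knot (single component).
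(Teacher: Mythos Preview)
Your argument is correct and matches the paper's almost line for line: for (1) the paper simply chooses a representative of $q$ whose endpoint on $\partial\overline{N(K)}$ coincides with that of $m_Q$ (your path $\delta$ is just the explicit version of this), and for (2) both you and the paper track the free endpoint of the homotopy $m_Qg\simeq m_Q$ to produce a loop $\alpha$ (your $\beta$) in $\partial\overline{N(K)}$ and conclude $g=\overline{m_Q}\,\alpha\,m_Q\in P$. One small bookkeeping slip worth fixing, precisely of the kind you anticipated: with the right-to-left convention, the loop you want in (1) is $g=\overline{m_Q}\,\overline{\delta}\,q$ (so that $m_Qg = m_Q\,\overline{m_Q}\,\overline{\delta}\,q \simeq \overline{\delta}\,q \simeq q$), not its inverse $\overline{q}\,\delta\,m_Q$.
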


\bpr (1) Choose representatives of $q$ with an endpoint coinciding endpoints on $\partial N(K)$. Then $q=m_Qg,$ for $g\in \pi_{1}(M-N(K))$ given by
the path $m_{Q}\overline{m_{Q}}q$.\\
(2) Since every $g\in P$ is of the form $\overline{m_Q}\alpha m_Q$,
$m_Qg=m_Q\overline{m_Q}\alpha m_Q=\alpha m_Q$ which can be homotoped along $\alpha$ to $m_Q$.
Conversely, suppose that $m_Qg=m_Q.$ The homotopy transforming $m_Qg$ into $m_Q$ moves the endpoint of $m_Qg$ along a closed loop $\alpha$ in $\p N(K).$
That means that $m_Qg$ and $\alpha m_Q$ are homotopic in $M-N(K)$ with their endpoints fixed. Consequently, $g=\overline m_Q \alpha m_Q,$ i.e. $g\in P.$
\epr

Note that Lemma \ref{G-act-on-Q}(2) implies that the stabilizer of $m_{Q}g$ is $g^{-1}Pg$.


Consider the peripheral subgroup $P$ and the meridian $m$ determined by a path $m_Q$ from the base point $b$ in $M-N(K)$ to $\p N(K)$, as above. Then the map
$\Psi:Q(M, K)\rightarrow (P\backslash \pi_{1}(M-K,b),m)$ sending $m_{Q}g$ to $Pg$ is well defined by Lemma \ref{G-act-on-Q}(2).

\begin{theorem}\label{main}
$\Psi:Q(M, K)\rightarrow (P\backslash \pi_{1}(M-K,b),m)$  is an isomorphism of quandles. Consequently, $Q(M,K)$ is determined by $\pi_{1}(M-K)$ with its peripheral structure.
\end{theorem}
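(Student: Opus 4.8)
The plan is to show that $\Psi$ is a well-defined quandle homomorphism, construct an explicit inverse, and conclude bijectivity. First I would verify that $\Psi$ is well-defined: if $m_Q g = m_Q g'$ in $Q(M,K)$, then $m_Q(g'g^{-1}) = m_Q$, so by Lemma \ref{G-act-on-Q}(2) we have $g'g^{-1}\in P$, hence $Pg = Pg'$. Conversely, surjectivity of $\Psi$ is immediate since every coset $Pg$ is hit by $m_Q g$, and Lemma \ref{G-act-on-Q}(1) guarantees every element of $Q(M,K)$ has the form $m_Q g$, so $\Psi$ is defined on all of $Q(M,K)$. Injectivity is the reverse of the well-definedness argument: $Pg = Pg'$ means $g'g^{-1}\in P$, so $m_Q(g'g^{-1}) = m_Q$ by Lemma \ref{G-act-on-Q}(2), giving $m_Q g = m_Q g'$. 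Thus $\Psi$ is a bijection of sets once we know it respects the quandle operations.

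The substantive step is checking that $\Psi$ is a quandle homomorphism, i.e. that $\Psi([q]^{[q']}) = \Psi([q])^{\Psi([q'])}$ for all $[q],[q']$. Writing $[q] = m_Q g$ and $[q'] = m_Q h$, the right-hand side is $Pg \,{}^{Ph} = P(gh^{-1}mh)$ by the definition of the quandle on $P\backslash\pi_1(M-K)$ from Section ``Quandles From Groups.'' For the left-hand side I need to identify the element of $\pi_1(M-K)$ that carries $m_Q$ to the path $q\overline{q'}m'q'$, where $m'$ is the meridian through the endpoint of $q$. Using the path description from the proof of Lemma \ref{G-act-on-Q}(1), this group element is $\overline{m_Q}\,q\,\overline{q'}\,m'\,q'$; substituting $q \simeq m_Q g$ and $q' \simeq m_Q h$ as paths (up to endpoint-fixing homotopy) and using that the meridian through the endpoint of $m_Q g$ is the conjugate $g^{-1}mg$ of the meridian $m$ determined by $m_Q$ (this conjugation is exactly the content of the remark following Lemma \ref{G-act-on-Q}), the element becomes $\overline{m_Q}\,(m_Q g)\,\overline{(m_Q h)}\,(\text{meridian})\,(m_Q h)$. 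Careful bookkeeping with the right-to-left concatenation convention collapses the $m_Q\overline{m_Q}$ pairs and yields $gh^{-1}mh$, matching the right-hand side. So $\Psi([q]^{[q']}) = P(gh^{-1}mh) = \Psi([q])^{\Psi([q'])}$.

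The main obstacle I anticipate is the path-algebra bookkeeping in the homomorphism step: one must be scrupulous about basepoints, about which meridian appears (it is the meridian through the endpoint of $q$, not through $b$, and these differ by conjugation), and about the ordering convention $\gamma'\gamma = $ ``follow $\gamma$ then $\gamma'$.'' A sign or order error here is easy to make and would produce $g^{-1}h m \overline{h^{-1}} $-type garbage instead of the clean conjugation formula. Once the homomorphism property is established, the final sentence of the theorem follows formally: the construction $(P\backslash\pi_1(M-K,b),m)$ depends only on the group $\pi_1(M-K)$, the conjugacy class of the peripheral subgroup $P$, and the choice of meridian $m\in Z(P)$ — i.e. on the peripheral structure — and we have exhibited a quandle isomorphism from $Q(M,K)$ to it, so $Q(M,K)$ is recovered from that data. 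I would also remark that changing the basepoint or the path $m_Q$ changes $P$ and $m$ only by conjugation, so the isomorphism class of the target quandle is genuinely an invariant, consistent with $Q(M,K)$ being independent of the basepoint up to isomorphism.
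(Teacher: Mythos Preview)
Your proposal is correct and takes essentially the same route as the paper: well-definedness and bijectivity from Lemma~\ref{G-act-on-Q}, followed by a direct path computation showing $(m_Qg)^{m_Qh}=m_Q\,gh^{-1}mh$ and hence $\Psi((m_Qg)^{m_Qh})=P(gh^{-1}mh)=(Pg)^{(Ph)}$. One simplification you obscure: since every path $m_Qg$ ends at the \emph{same} boundary point as $m_Q$ (the loop $g$ is traversed first and is based at $b$), the meridian loop in the quandle formula is literally the one at the endpoint of $m_Q$, and $\overline{m_Q}(\text{meridian loop})m_Q$ is already $m\in\pi_1$ with no conjugation needed---your detour through $g^{-1}mg$ is unnecessary and slightly misleading, though it does not affect the final formula.
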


\bpr

$\Psi$ is $1$-$1$ by Lemma \ref{G-act-on-Q}(2), as well as onto, by Lemma \ref{G-act-on-Q}(1). To show that $\Psi$ is a quandle homomorphism, observe that
\begin{equation}
  (m_{Q}g)^{m_{Q}h}=m_{Q}g\overline{h}\overline{m_{Q}}mm_{Q}h=m_{Q}g\overline{h}mh.
\end{equation}
It follows that
\begin{equation}
  \Psi((m_{Q}g)^{m_{Q}h})=\Psi(m_{Q}g\overline{h}mh)=Pg\overline{h}mh=Pg^{Ph}=\Psi(m_Qg)^{\Psi(m_Qh)}.
\end{equation}
Therefore, $\Psi$ is a bijective quandle morphism.
\epr

\section{Remarks}\label{JR}
Joyce has also proved a theorem for classical knots
stating that the triple $(\pi_{1}(M-K), P, m)$ can be reconstructed from $Q(M,K)$. We generalize his result here.

Let $L$ be an $n$-knot in $F\times [0,1]$ in general position with respect to $\pi: F\times [0,1]\to F.$ Denote its diagram by $d.$
Let $G(d)$ be a group with the following presentation:
its generators are the faces of $d$. Let $D_0=\left\{x\in \pi(L), |\pi^{-1}(x)|=2\right\}$ be the pure double point set. Then for each connected component $\gamma$ of $D_0$ consider
the relation $x=yzy^{-1},$ where $x, y, z$ are the three faces meeting at $\gamma$ with $y$ being the overcrossing face, and with the normal vector to $y$ pointing toward $x$, as illustrated in Fig. \ref{Wirtr}.

\begin{figure}[htbp]
\begin{centering}
\includegraphics[scale=0.5]{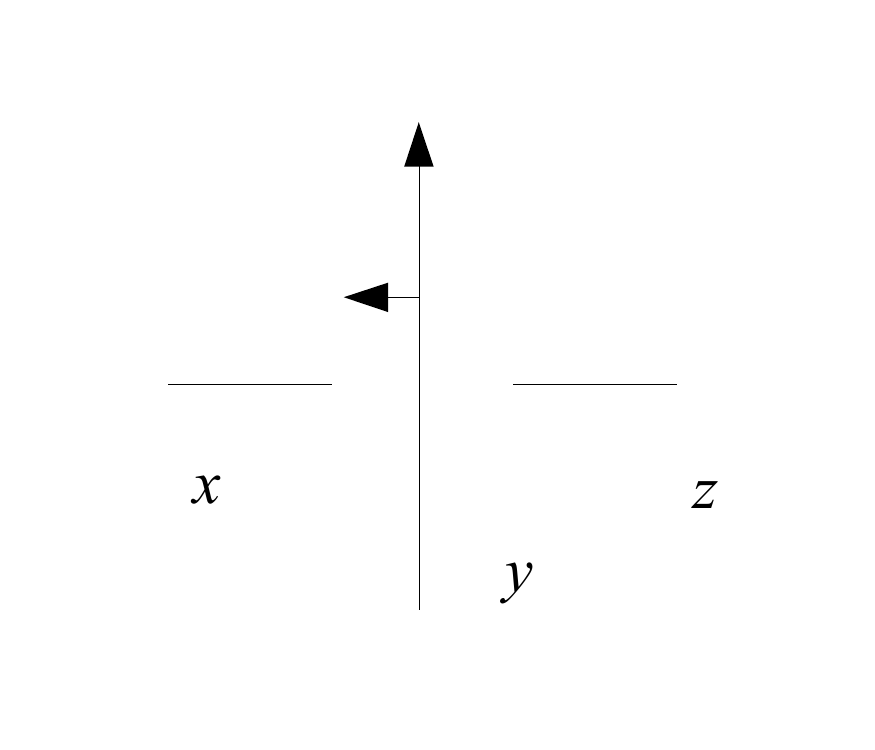}
\caption{A double point curve as shown yields relations $x=z^y$ on the group or quandle.}
\label{Wirtr}
\end{centering}
\end{figure}

Similarly we associate with $d$ a quandle $Q(d)$. Its generators are faces of $\pi(L).$ The relations between faces are $x=z^y,$ where $x,y,z$ are as above. Note that, in general, this quandle is not isomorphic to the group quandle of the knot group of $L$. For example, the knot group and its group quandle do not distinguish the square knot from the granny knot, but the knot quandles for these two knots are distinct.

\begin{theorem}\emph{(}\cite[Theorem 2]{KamaWirt}\emph{)}\label{KW}
For any knot $F^{n+1}$ simply connected, and $K\subset F^{n+1}\times I$ in general position with respect to $\pi$, with the knot diagram $d$, $G(d)$ and $Q(d)$ are isomorphic to the knot group and knot quandle of $K$.
\end{theorem}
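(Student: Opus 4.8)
The plan is to reduce the statement to a Wirtinger-type presentation argument combined with the geometric definition of the knot group and knot quandle from Sections \ref{geoqu}--\ref{JR}. First I would recall that since $F^{n+1}$ is simply connected, the complement $M - N(K)$ (where $M = F^{n+1}\times I$) is built, up to homotopy, from the diagram data: the faces of $d$ correspond to the sheets of $L$ away from the undercrossing set, and a loop based at a point ``above'' the diagram that pierces the sheet over a face represents a meridian. Concretely, I would set up a handle decomposition of $M - N(K)$ dual to the broken-surface diagram $d$: one 0-handle (the basepoint region, which is connected and simply connected because $F^{n+1}$ is simply connected and we may pull the basepoint off to the $I$-direction top), one 1-handle per face of $d$ (giving a meridian generator), and one 2-handle per connected component $\gamma$ of the pure double point set $D_0$, attached along the commutator-type word that records that when one passes under the overcrossing face $y$, the meridian $x$ on one side of the undercrossing sheet equals the conjugate $y z y^{-1}$ of the meridian $z$ on the other side, exactly as in Fig.~\ref{Wirtr}.

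The key steps, in order, are: (1) put $K$ in general position with respect to $\pi$ so the diagram $d$ makes sense and $D_0$ is a union of immersed $(n-1)$-submanifolds with the branch and triple-point sets as described in the general-position definition; (2) verify that triple points (the self-crossing set $N$) and the branch set $B$ contribute \emph{no new generators and no new relations} beyond those already implied by the face relations along $D_0$ — for triple points this is the usual ``Reidemeister-III / tetrahedron'' consistency check showing the two ways of reading the relations around a triple point agree, and for branch points it is the observation that the two local sheets $U_0, U_1$ with $\pi(U_0)=\pi(U_1)$ force the adjacent faces to carry the same meridian so the relation there is trivial; (3) use van Kampen on the handle decomposition to conclude $\pi_1(M-K) \cong G(d)$; (4) run the parallel argument at the level of quandles, using the geometric description of $Q(M,K)$ from Section \ref{geoqu} (homotopy classes of paths from $b$ to $\partial\overline{N(K)}$, with operation $[q]^{[q']} = [q\,\overline{q'}\,m\,q']$): a path may be pushed generically onto the diagram, each face it ends on gives a generator, sliding the endpoint across a double-point curve $\gamma$ yields precisely $x = z^y$, and Theorem~\ref{main} together with Lemma~\ref{G-act-on-Q} identifies this combinatorially-presented quandle with $Q(M,K) \cong (P\backslash \pi_1(M-K), m)$; (5) check the presented quandle $Q(d)$ satisfies Joyce's axioms and that the natural map $Q(d)\to Q(M,K)$ is a bijection by comparing with the group presentation via the forgetful-type correspondence.

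The main obstacle I expect is step (2): ensuring that the higher-dimensional strata — triple points and, especially, branch points — genuinely impose no extra constraints. In the classical $n=1$ case there are no branch points and triple points do not occur, so the Wirtinger presentation is immediate; here one must carefully analyze a neighborhood of a generic point of $N$ (where three sheets of $\pi(L)$ meet, $\pi(B)$ transverse to everything) and of $B$ itself. The cleanest route is to cite the local models for the Roseman moves: since $G(d)$ and $Q(d)$ are manifestly invariant under each Roseman move (one checks the relations before and after agree, which is exactly the content of the biquandle/quandle axioms being ``motivated by the Roseman moves'' as remarked after the biquandle definition), and since by Theorem~\ref{Rthm} and \cite[Prop.~2.9]{Roseman2} any diagram is connected to any other by Roseman moves and ambient isotopies of $F$, it suffices to verify the isomorphism on one convenient diagram — and then the branch-point and triple-point contributions can be arranged, by an isotopy, to be ``visible'' in a standard local form where the triviality of their relations is transparent. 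Alternatively, and more self-containedly, I would invoke \cite[Theorem 2]{KamaWirt} directly as the excerpt does, in which case the ``proof'' is simply the observation that the hypotheses there ($F^{n+1}$ simply connected, $K$ in general position) are met and that the group/quandle we have defined from $d$ coincide verbatim with theirs; the only real work is translating their conventions (orientation of the normal vector, direction of conjugation) into ours, which is the routine bookkeeping I would not grind through here.
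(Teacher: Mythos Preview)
Your proposal is correct and follows essentially the same approach as the paper: the paper's proof of Theorem~\ref{KW} is simply the one-line remark that it is ``proved by an iterative application of the Van Kampen theorem'' (citing \cite{KamaWirt} for groups and \cite[Theorem 13.1]{DJ} for the quandle Van Kampen theorem), and the more detailed sketch you give---faces as meridian generators, one relation per component of $D_0$, and checking that triple points and branch points impose no further relations---is exactly the argument spelled out later in the paper for the virtual case (the proof of $G(d)\cong\pi_1(h(F)-L)$). Your alternative route via Roseman-move invariance is unnecessary here but harmless.
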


This theorem is proved by an iterative application of the Van Kampen theorem. Note that in \cite{KamaWirt} only groups are considered, but the proof for quandles is analogous. The Van Kampen theorem holds for quandles, by \cite[Theorem 13.1]{DJ}.

\begin{theorem}\label{hid}
For any $n$-knot $K$ in $D^{n+2}$, the triple $(\pi_1(D^{n+2}-K), P, m)$ is uniquely determined (up to an isomorphism) by the knot quandle $Q(D^{n+2}, K)$.
\end{theorem}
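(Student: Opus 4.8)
The plan is to reconstruct the triple $(\pi_1(D^{n+2}-K), P, m)$ from $Q(D^{n+2},K)$ by mimicking Joyce's argument for classical knots, using the algebraic structure of the quandle together with the distinguished role played by the ambient group action. First I would recall the key structural fact from the previous sections: by Theorem \ref{main}, $Q = Q(D^{n+2},K)$ is isomorphic as a quandle to $(P\backslash G, m)$ where $G = \pi_1(D^{n+2}-K,b)$, $P$ is a peripheral subgroup, and $m\in Z(P)$ is the meridian; moreover by Lemma \ref{G-act-on-Q} the group $G$ acts on $Q$ transitively with $P$ the stabilizer of the basepoint element $m_Q$. So the task reduces to: given only the quandle $Q$ (as an abstract algebraic object), recover a group $G$ acting on it in the right way, a transitive point-stabilizer $P$, and a central element $m$.

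The natural candidate is the \emph{adjoint group} (or inner automorphism group) construction: each element $a\in Q$ gives the inner automorphism $x\mapsto x^a$, and one forms the group $\mathrm{Adj}(Q)$ generated by symbols $e_a$, $a\in Q$, subject to the relations forcing $e_a$ to act as $x\mapsto x^a$ and the relation $e_{a^b} = e_b^{-1}e_a e_b$ coming from the third quandle axiom. The key steps would then be: (1) show $\mathrm{Adj}(Q)$ is isomorphic to $G$ (or to the relevant quotient) when $Q$ is the knot quandle of an $n$-knot in $D^{n+2}$ — this is where I would invoke that $D^{n+2}-K$ is simply connected at infinity / that $G$ is normally generated by a single meridian, so that $G$ is exactly the group generated by meridians with the conjugation relations, matching the presentation of $\mathrm{Adj}(Q)$; here Theorem \ref{KW} and the Wirtinger-type presentation $G(d)$ of the knot group from a diagram $d$ do the work, since $G(d)$ is by construction generated by the faces (meridians) with precisely the relations $x = z^y$ that also define $Q(d)$ as a quandle, so $\mathrm{Adj}(Q(d)) \cong G(d) \cong \pi_1(D^{n+2}-K)$. (2) Recover $P$ as the stabilizer of a chosen basepoint element $q_0\in Q$ under the $\mathrm{Adj}(Q)$-action; by Lemma \ref{G-act-on-Q}(2) this is an honest peripheral subgroup, and the coset space is $Q$ itself, consistent with Theorem \ref{main}. (3) Recover the meridian $m$ as the element $e_{q_0}\in\mathrm{Adj}(Q)$ corresponding to the chosen basepoint; one checks it lies in $Z(P)$ because $m$ acts trivially on $q_0$ (it fixes $q_0$ since $q_0^{q_0}=q_0$) and, more to the point, because every element of $P$ is built from loops on $\partial N(K)$ which commute with the meridian fibre direction — concretely, $e_{q_0}$ commutes with $e_{q_0g}^{\pm 1}$ for $g\in P$ using the relation $e_{a^b}=e_b^{-1}e_ae_b$ and $q_0 g = q_0$. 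Finally I would note well-definedness up to isomorphism: a different choice of basepoint element $q_0$ is related to the given one by an element of $\mathrm{Adj}(Q)$ (transitivity), which conjugates $P$ and $m$ accordingly, so the triple is canonical up to the expected conjugation ambiguity.

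The main obstacle I anticipate is step (1): proving that the abstractly defined adjoint group $\mathrm{Adj}(Q)$ really equals $\pi_1(D^{n+2}-K)$ and not some larger group. For classical knots this rests on the Wirtinger presentation, where the knot group is generated by meridians; in the $n$-knot case in $D^{n+2}$ one must know that $\pi_1(D^{n+2}-K)$ is generated by meridians (equivalently, normally generated by one meridian, with no extra relations beyond the conjugation relations). This is exactly what the diagrammatic presentation $G(d)$ provides via Theorem \ref{KW}, since $G(d)$ has generators = faces = meridional elements and relations = the conjugation relations $x=z^y$, which are precisely the defining relations of $\mathrm{Adj}(Q(d))$; so the argument goes through once one matches the presentation of $\mathrm{Adj}(Q)$ against $G(d)$. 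The rest (centrality of $m$, identification of $P$ as a stabilizer, conjugation ambiguity) is then routine bookkeeping paralleling Lemma \ref{G-act-on-Q} and the argument of Theorem \ref{main}.
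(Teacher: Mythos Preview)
Your proposal is correct and follows essentially the same route as the paper: the paper's $Adconj(Q)$ is exactly your adjoint group $\mathrm{Adj}(Q)$, and both arguments identify it with $\pi_1(D^{n+2}-K)$ by matching its presentation against the Wirtinger presentation $G(d)$ via Theorem~\ref{KW}, then recover $m$ as the image of a chosen quandle element and $P$ as its stabilizer under the induced action. Your treatment is slightly more careful in spelling out the centrality check for $m$ and the conjugation ambiguity, but the core strategy is identical.
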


\bpr
For any quandle $Q$, we may define the group $Adconj(Q)$ to be the group generated by the elements of $Q$, whose relations are exactly the quandle relations, with the quandle operation interpreted as conjugation in the group. By Theorem \ref{KW}, $Q(D^{n+2}, K)$ has a presentation whose generators correspond to the faces of a knot diagram of $K$, and whose relations are the Wirtinger relations corresponding to each meeting of faces in a double point set. It is then easy to check that $Adconj(Q(D^{n+2}, K))$ has a presentation as a group whose generators correspond to the faces of a knot diagram of $K$, with relations being exactly the Wirtinger relations from each double point component. It follows that $Adconj(Q)$ is canonically isomorphic to $\pi_{1}(D^{n+2}-K)$ (up to a choice of a basepoint). There is also a canonical map from the elements of $Q$ to the elements of $Adconj(Q)$, sending a word in the generators of $Q$ to the corresponding word in the generators of $Adconj(Q)$, with the quandle operation again interpreted as group conjugation.
For any $q\in Q$ denote the corresponding element in $Adconj(Q)$ by $\hat{q}$.
Then $m\in Adconj(Q(D^{n+2}, K))$ can be assumed to be any of the generators corresponding to a face in the knot diagram. In general,
$Adconj(Q(D^{n+2}, K))$ acts on $Q(D^{n+2}, K)$ by $q\hat{q_{0}}...\hat{q_{n}}=(...(q^{q_{0}})^{q_{1}}...)^{q_{n}}$. To see that this action is well-defined, note that if an element
of $Adconj(Q(D^{n+2}, K))$ has two presentations, $\hat{q_{0}}...\hat{q_{n}}$ and $\hat{p_{0}}...\hat{p_{m}}$,
this implies that those products of Wirtinger generators are homotopic rel basepoints, and
this homotopy passes to a homotopy of $(...(q^{q_{0}})^{q_{1}}...)^{q_{n}}$ to $(...(q^{p_{0}})^{p_{1}}...)^{p_{m}}$.
It is then straightforward to check that this action of $Adconj(Q(D^{n+2}, K))$ is just the geometrically defined action of $\pi_{1}(D^{n+2}-K)$ on $Q(D^{n+2}, K)$.
Then $P$ will be the stabilizer of $m\in Q(D^{n+2}, K)$ under this action, and so we have constructed
$(\pi_1(D^{n+2}-K), P, m)$. Note that $m$ is unique (up to conjugation) and $P$ is determined by $m$. Hence, this triple is unique up to an isomorphism.
\epr

Quandle invariants are particularly simple for knotted $n$-spheres, $n\geq 2$, since their peripheral groups are cyclic and, hence $m$, is unique. (The choice between $m$ and $m^{-1}$ is determined by the orientations of $M$ and of $K$). It follows that for classical knots, the quandle does not contain much more information than the fundamental group. On the other hand, when $\pi_1(\p N(K))$ is a more complicated subgroup of $\pi_1(M-N(K))$, the knot quandle may be able to capture more information than $\pi_1(M-N(K))$.

Eisermann, \cite{ME}, has in fact shown that the quandle cocycle invariants of classical knots are a specialization of certain colorings of their fundamental
groups. His construction makes use of the full peripheral structure of the classical knot
(that is, the longitude and the meridian),
and thus does not generalize immediately to higher dimensions. However, in light
of our result here, we pose the question of whether a similar construction might
not be possible in higher dimensions.

%
\section{Geometric Definition of Virtual Links in Any Dimension}
\label{s-geom-def-virt}
%

Kauffman defined virtual links as a combinatorial generalizations of classical link diagrams or, alternatively, as the Gauss codes of classical links diagrams. However, there is a geometric definition of virtual links as well. We will follow essentially the treatment given in \cite{KamaKama} and \cite{StableEq} for this approach. Note that while Carter et al. work with link diagrams on surfaces, we work with links in thickened surfaces; By Theorem \ref{Rthm} and the remarks below it, these approaches are equivalent. As we will see, the definition of \cite{KamaKama} and \cite{StableEq} can be extended to higher dimensions in a purely geometric manner, without reference to any combinatorial constructions.

Let $V_{n}$ be the set of pairs $(F\times [0,1], L)$ where $F$ is a compact $(n+1)$-manifold (with possibly non-empty boundary), $F\times [0,1]$ is given the structure of a trivial $[0,1]$-bundle over $F$, and where $L$ is an $n$-manifold embedded in the interior of $F\times [0,1]$ such that $L$ meets every component of $F\times [0,1]$. The $[0,1]$-bundle structure on $F\times [0,1]$ will always be implied to be the canonical bundle coming from projection $\pi:F\times [0,1]\rightarrow F$ when we refer to elements of $V_n$ in this way. For convenience we will either simply write $(F\times [0,1], L)$, with the bundle structure implied, or, if using the notation $(M, L)\in V_n$, we will assume $M$ is identified with some trivial bundle $F\times [0,1]$, with the bundle structure coming from projection onto the first factor of the Cartesian product.
(Our theory will not be affected by the fact that $F\times [0,1]$ may have corners.) 
Define a relation $\sim$ on $V_{n}$ by the condition $(F_{1}\times [0,1], L_{1})\sim(F_{2}\times [0,1], L_{2})$ iff there exists an embedding $f:F_1\rightarrow F_2$ such that $f\times id_{[0,1]}(L_1)=L_2$. We define an equivalence relation $\cong$ on $V_n$, $(F_{1}\times [0,1], L_{1})\cong (F_{2}\times [0,1], L_{2})$, to be the equivalence relation generated by $\sim$ together with smooth isotopy of the link. When $(F_{1}\times [0,1], L_{1}) \cong (F_{2}\times [0,1], L_{2})$, we say they are \emph{virtually equivalent}.


Consider first the case $n=1$. Note that any link diagram in a surface $F$ defines a Gauss code and, consequently, a virtual link. 
Since this construction is invariant under Reidemeister moves, it descends to a map $f: V_1\to$ \{Kauffman's virtual links\}. 
Furthermore, since $f(F_1\times [0,1],L_1)=f(F_2\times [0,1],L_2)$ for $(F_1\times [0,1],L_1)\cong (F_2\times [0,1],L_2),$
it factors to $f: V_1/\cong\ \to \{\text{virtual links}\}.$

\begin{definition}
An element $(F\times [0,1], L)$ of $V_{n}$, with $L$ in general position with respect to $\pi$, is an \emph{abstract $n$-link} iff $\pi(L)$ is a deformation retract of $F\times [0,1]$.
\end{definition}

We now wish to define an inverse map to $f$. Given a virtual link diagram $C$, we may construct an abstract link $(F\times [0,1],L)\in V_{1}$ using the following construction. Let $F\times [0,1]'$ be a neighborhood of the graph $C$ in $S^{2}$. Replace the virtual crossings by changing the thickened ``+'' shape in a neighborhood of each virtual crossing into two strips with an arc along each of them, as shown in Fig. \ref{arch}. The result is a $2$-manifold $F$ with a link diagram on it, which therefore defines a link in $F\times [0,1]$. Let us denote its equivalence class in $V_1/\cong$ by $g(C).$ If two virtual link diagrams $C$ and $C'$ are equivalent under the virtual Reidemeister moves, then it is easily checked that $g(C)\cong g(C')$. We may therefore regard the map $g$ as a map from equivalence classes of virtual link diagrams to equivalence classes of virtual $1$-links.

\begin{figure}
		\centering
			\includegraphics[scale=0.5]{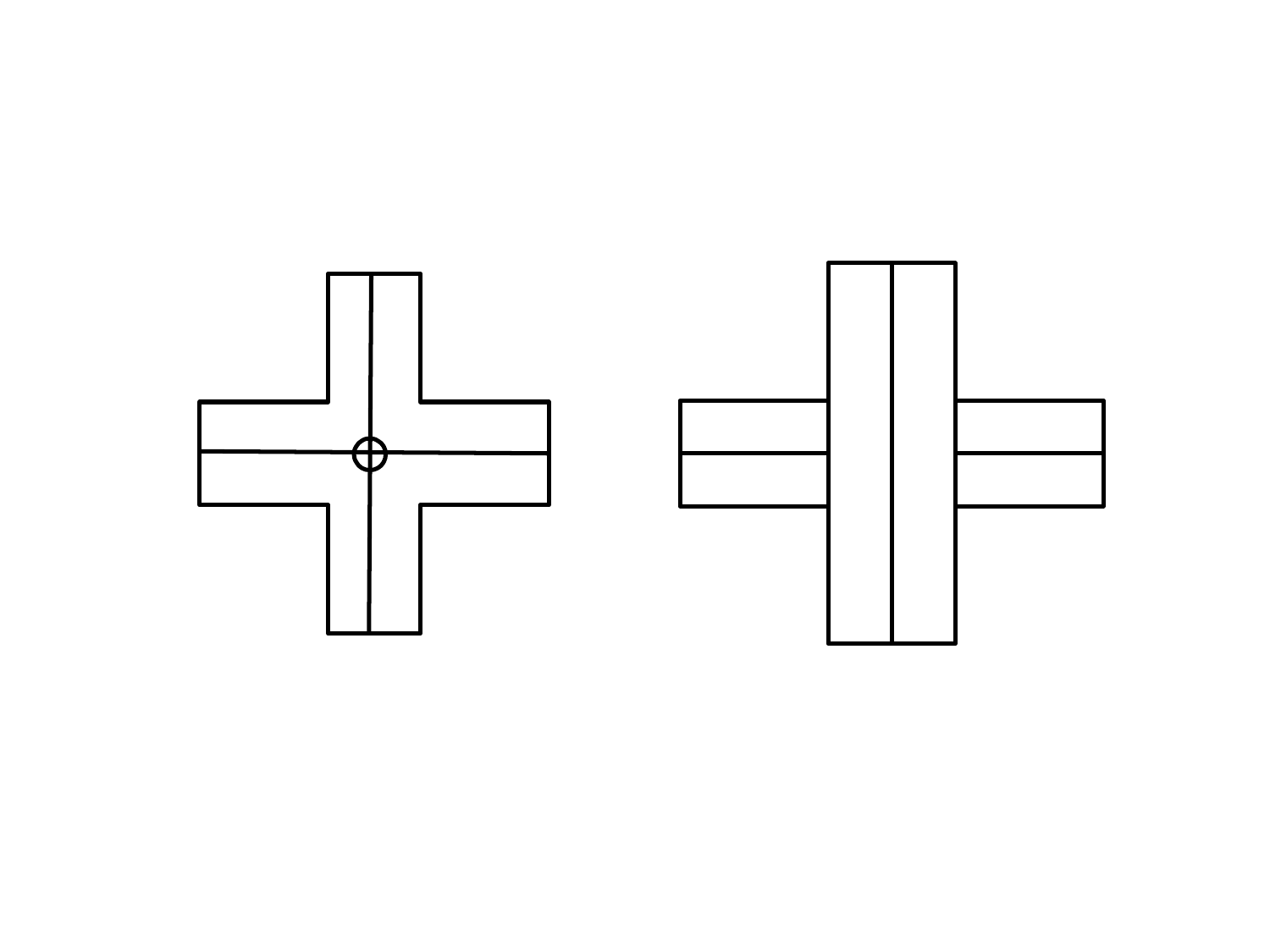}
		\caption{At each virtual crossing in $F\times [0,1]'$, we modify the disk into two disks with an arc crossing each of them.}
		\label{arch}
\end{figure}

The relation between $V_{1}/\cong$ and Kauffman's virtual link is shown by the following theorem, which is proved by a fairly easy verification.

\begin{theorem}\emph{(}\cite{KamaKama, StableEq}\emph{)}
$f$ and $g$ are bijective inverses between the set of virtual link diagrams modulo the virtual Reidemeister moves and $V_{1}/\cong$.
\end{theorem}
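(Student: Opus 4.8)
The plan is to show that $f$ and $g$ are mutually inverse by checking the two composites $f\circ g$ and $g\circ f$ are the identity on the respective sets. Throughout, the main tool is the already-established fact that a link diagram on a surface $F$ determines, and is determined up to Reidemeister moves by, its Gauss code, together with the observation that passing to a thickened surface and back does not change the underlying combinatorics.

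First I would check $f\circ g = \mathrm{id}$ on the set of virtual link diagrams modulo virtual Reidemeister moves. Given a virtual link diagram $C$, the surface $F$ built in the construction of $g(C)$ is a regular neighborhood of the graph $C$ in $S^2$, modified at the virtual crossings by replacing each thickened ``$+$'' with two disjoint strips (Fig.~\ref{arch}). The key point is that the classical crossings of $C$ survive unchanged in the link diagram on $F\times[0,1]$, while the virtual crossings have been ``resolved away'' into the topology of $F$ (the two strips are disjoint, so the arcs passing through them no longer cross). Reading off the Gauss code of this link diagram on $F$ therefore recovers exactly the Gauss code of $C$: the arcs are the same, the classical crossings are the same with the same signs and the same cyclic orders along the overcrossing arcs. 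Hence $f(g(C))$ is the virtual link of $C$, i.e. $C$ itself modulo virtual Reidemeister moves. One must also verify that $g$ is well defined on equivalence classes, which the excerpt already asserts is an easy check (each virtual Reidemeister move on $C$ induces an isotopy of the link together with a stabilization/destabilization of $F$, hence leaves $g(C)$ unchanged in $V_1/\cong$).

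Next I would check $g\circ f = \mathrm{id}$ on $V_1/\cong$. Start with $(F\times[0,1],L)\in V_1$; isotope $L$ into general position so that $\pi\imath(L)$ is a link diagram $d$ on $F$. Then $f(F\times[0,1],L)$ is the virtual link diagram $C_d$ whose Gauss code is that of $d$; realize $C_d$ as an actual diagram in $S^2$, introducing virtual crossings wherever the planar realization of the Gauss code forces arcs to cross. Applying $g$ to $C_d$ produces a surface $F'$ (a neighborhood of $C_d$ in $S^2$, with strips inserted at the virtual crossings) carrying a link diagram $d'$ with the same Gauss code as $d$. The task is to show $(F'\times[0,1],L')\cong(F\times[0,1],L)$. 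For this I would argue that two thickened-surface links with the same Gauss code are related by stabilizations: both $(F\times[0,1],L)$ and $(F'\times[0,1],L')$ can be destabilized (compressing away handles of $F$ disjoint from the link diagram) to a minimal-genus representative, and any two such minimal representatives of the same Gauss code are isotopic — this is precisely the stable-equivalence statement of Kamada--Kamada and Carter et al.\ that underlies the whole setup. Equivalently, one notes $g(C_d)$ is by construction a neighborhood of the graph $\pi\imath(L)$ (with the virtual-crossing modification), hence an \emph{abstract} link in the sense of the definition above, and any element of $V_1$ is $\cong$-equivalent to the abstract link obtained by taking a regular neighborhood of its own diagram — so both sides equal the same abstract link, up to $\cong$.

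The main obstacle is this last step: controlling $\cong$-equivalence, i.e.\ proving that same Gauss code implies virtual equivalence in $V_1/\cong$. The subtlety is that a given Gauss code has many planar (virtual) realizations and many thickened-surface realizations of varying genus, and one must show they all collapse to a single class — this is exactly where the Kamada--Kamada / Carter--et al.\ stable-equivalence theorem does the real work, and I would invoke it rather than reprove it. The remaining verifications (that $g$ respects virtual Reidemeister moves, that the Gauss code is genuinely preserved by the strip construction, that general position can be achieved by Theorem~\ref{Rthm}) are the ``fairly easy verification'' the excerpt alludes to, and I would treat them as routine diagram-chasing.
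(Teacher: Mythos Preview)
The paper does not supply its own proof of this theorem: it simply cites \cite{KamaKama, StableEq} and asserts the result follows by ``a fairly easy verification.'' Your proposal therefore contains more detail than the paper itself, and the $f\circ g$ direction is handled correctly.

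For $g\circ f$, however, there is a circularity in your first line of argument. You write that ``any two such minimal representatives of the same Gauss code are isotopic --- this is precisely the stable-equivalence statement of Kamada--Kamada and Carter et al.'' and propose to invoke it rather than reprove it. But the theorem you are being asked to prove \emph{is} the Kamada--Kamada / Carter--Kamada--Saito theorem (note the citation in the statement), so this is assuming the conclusion.

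Your second line of argument, via abstract links, is the right route and is in fact how the cited references proceed --- but your phrasing is imprecise. You say ``$g(C_d)$ is by construction a neighborhood of the graph $\pi\imath(L)$,'' which is not literally true: $\pi\imath(L)$ lives in $F$, while $g(C_d)$ is built from a diagram in $S^2$. What \emph{is} true is that both $g(C_d)$ and the regular neighborhood of $\pi(L)$ in $F$ are abstract links whose underlying ribbon-graph data (the $4$-valent graph together with cyclic edge-orders at each vertex) coincide, since both encode the same Gauss code and the cyclic order at a classical crossing is forced. A ribbon neighborhood of a graph is determined up to homeomorphism by this data, so the two abstract links are isomorphic as pairs, hence $\cong$-equivalent. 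With this correction the argument is complete and matches the spirit of the original references.
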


Therefore, $V_{1}/\cong$ gives a completely geometric definition for Kauffman's virtual links. It has the particular advantage that it does not explicitly rely upon diagrams or Gauss codes for its definition, which makes it possible to generalize to higher dimensions without first needing to establish a higher-dimensional analog for diagrams or Gauss codes.

This equivalence of $V_{1}/\!\!\cong$ to Kauffman's virtual links makes it geometrically natural to simply define virtual $n$-links as elements of $V_{n}/\cong$.

\begin{definition}
A \emph{virtual $n$-link} is an element of $V_{n}/\cong$. We denote the equivalence class of $(F\times [0,1],L)$ by $[F\times [0,1],L].$
\end{definition}

Elements $[F\times [0,1],L]\in V_{n}/\cong$ such that $L$ has exactly one component will be termed \emph{virtual $n$-knots}. The set of virtual $n$-knots will be denoted by $VK_{n}$. Furthermore, it may easily be seen that for $(F\times [0,1], L), (F'\times [0,1], L')\in V_n$, $(F\times [0,1], L)\cong (F'\times [0,1], L')$ only if $L$ and $L'$ are diffeomorphic.

\begin{definition}
We will denote the set of virtual $n$-links $[F\times [0,1], L]$ with a fixed $L$ by $V(L)$.
\end{definition}

Our definition of virtual links may easily be generalized to virtual tangles. Let $VT_n$ consist of pairs $(F\times [0,1], L)$, for some $(n+1)$-manifold $F$, and $L$ is an $n$-manifold properly embedded in $F\times (0,1)$. Note that this implies that $\partial L$ is embedded in $\partial F\times (0,1)$. Let $(F_{1}\times [0,1], L_{1})\sim(F_{2}\times [0,1], L_{2})$ iff there exists an embedding $f:F_{1}\rightarrow F_{2}$ such that $(f\times id_{[0,1]})(L_{1})=L_2$. We define $\cong$ to be the equivalence relation generated by $\sim$ and by smooth isotopy of $L$, keeping the boundary of $L$ embedded in $\partial F\times (0,1)$.

\begin{definition}
A \emph{virtual $n$-tangle} is an element of $VT_{n}/\cong$.
\end{definition}

\begin{definition}
A \emph{realizable $n$-link} is a virtual $n$-link whose equivalence class in $V_{n}$ includes a pair $(S^{n+1}\times [0,1], L)$.
\end{definition}

Equivalently, a realizable $n$-link is a virtual $n$-link which is virtually equivalent to some pair $(D^{n+1}\times [0,1], L)$. We will use either characterization interchangeably.

It is known that not every element of $V_{1}$ is realizable. For higher $n$, the $L$ in $(F\times [0,1], L)$ may have the diffeomorphism type of a manifold that does not embed into $S^{n+1}\times [0,1]$, in which case this pair cannot be realizable. Some simple examples of this may be found in odd-dimensional real projective spaces; for example, $\mathbb{R}P^5$ cannot be embedded in $\mathbb{R}^7$, \cite{Davis}.

\begin{question}
For a fixed number $n$, are there non-realizable elements $(F\times [0,1], L)$ of $V_{n}$ where $L$ is a manifold that embeds in $S^{n+1}\times [0,1]$?
\end{question}

%

\section{Homotopy Invariants for Virtual $n$-Links}
\label{s-homotopy-inv}
%

In this section we define some invariants of  virtual $n$-links. If these invariants are extensions of classical invariants, i.e. they agree with some classical invariant of $n$-links whenever $(F\times [0,1], L)$ is realizable, then they also can be used to study the relationship between classical $n$-links and virtual $n$-links. We will begin with the following

\begin{remark}\label{abstractknot}
Any element $(F\times [0,1], L)\in V_{n}$ is virtually equivalent to an abstract $n$-link $(F'\times [0,1], L)$ where $F'$ is a tubular neighborhood of  $\pi(L)$ in $F$, and $\pi$ is the canonical projection $\pi:F\times [0,1]\rightarrow F$.
\end{remark}


For a pair $(F\times [0,1], L)\in V_{n}$ or $(F\times [0,1], L)\in VT_n$, let $h(F)$ denote the space $(F\times [0,1])/r$, where $r$ is the relation  $(x, 1) r (x', 1)$ for all $x, x'\in F.$
Note that this is equivalent to gluing a cone over $F$ to $F\times\{ 1 \}$.
We can then use the pair of spaces $(h(F), L)$ to study virtual links, although this pair is not an element of $V_n$.

\begin{theorem}
The homotopy type of $h(F)-L$ is invariant under virtual equivalence.\label{hinv}
\end{theorem}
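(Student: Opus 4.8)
The plan is to show that virtual equivalence is generated by two kinds of moves — the relation $\sim$ (stabilization/destabilization by an embedding $f:F_1\to F_2$) and smooth isotopy of $L$ inside a fixed $F\times[0,1]$ — and then to check that $h(F)-L$ is homotopy invariant under each. The second case is essentially immediate: if $L$ and $L'$ are smoothly isotopic in $F\times[0,1]$, the isotopy is (the restriction of) an ambient isotopy of $F\times[0,1]$, and since the construction $h(F)$ depends only on $F$, this ambient isotopy descends to a homeomorphism of $h(F)$ carrying $L$ to $L'$; hence $(h(F),L)\cong(h(F),L')$ as pairs, and in particular $h(F)-L\simeq h(F)-L'$. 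So the real content is the first case.

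For the $\sim$ move, suppose $f:F_1\to F_2$ is an embedding with $(f\times\mathrm{id})(L_1)=L_2$. I first claim it suffices to treat the case where $f$ is the inclusion of $F_1$ as a codimension-$0$ submanifold of $F_2$ (up to the homeomorphism induced by $f$ onto its image, which again plays well with $h$). So write $F_1\subseteq F_2$, with $L=L_1=L_2$ sitting in $F_1\times[0,1]\subseteq F_2\times[0,1]$. There is a natural map $h(F_1)\to h(F_2)$ induced by the inclusion $F_1\times[0,1]\hookrightarrow F_2\times[0,1]$ (it respects the cone-collapsing relation $r$), and it restricts to the identity on $L$, giving a map of pairs $(h(F_1),L)\to(h(F_2),L)$ and hence a map $h(F_1)-L\to h(F_2)-L$. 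The goal is to show this map is a homotopy equivalence. Here I would use Remark~\ref{abstractknot}: I may first replace both sides by abstract links, i.e. assume $F_1$ is a tubular neighborhood of $\pi(L)$ in $F_2$, so that $\pi(L)$ is a deformation retract of $F_1\times[0,1]$. The point is that the complement $h(F_2)-L$ deformation retracts onto $h(F_1)-L$: the region $h(F_2)$ deformation retracts onto $h(F_1)$ by first retracting $F_2\times[0,1]$ onto $F_1\times[0,1]$ union a collar-compatible piece and then collapsing into the cone, and this retraction can be arranged to fix a neighborhood of $L$ pointwise (since $L$ lies in the interior of $F_1\times[0,1]$), so it restricts to a deformation retraction of the complements.

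The cleanest way to package the retraction is: $h(F_2)$ is $F_2\times[0,1]$ with $F_2\times\{1\}$ coned off; since $F_1$ is a tubular neighborhood of $\pi(L)$, the pair $(F_2, F_1)$ admits a deformation retraction of $F_2$ onto $F_1$ relative to a neighborhood of $\pi(L)$ only if $\pi(L)$ is a deformation retract of $F_2$ — which is not true in general — so instead I should retract directly in $h$: the cone point can be used to absorb all of $F_2\setminus F_1$. Concretely, $h(F_2)-L$ deformation retracts onto $h(F_1)-L$ by pushing everything in $(F_2\setminus \mathrm{int}\,F_1)\times[0,1]$ up toward the cone vertex and everything in $F_1\times[0,1]$ that is outside a chosen tubular neighborhood of $L$ also up toward the vertex as needed, using that $h(F)$ is contractible (it is a cone on $F\times\{1\}$ with $F\times[0,1]$ attached, hence contractible) to give enough room; the subtlety is doing this while keeping $L$ out of the way, which is fine since $L\subset\mathrm{int}(F_1\times(0,1))$ is disjoint from both the part being pushed and the cone vertex.

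The main obstacle I expect is precisely this last retraction argument: making rigorous that $h(F_2)-L$ deformation retracts onto $h(F_1)-L$ when $F_1$ is only a tubular neighborhood of $\pi(L)$, and handling the general $\sim$ move (where $f(F_1)$ need not be all of some nice submanifold) by first passing through the abstract-link reduction of Remark~\ref{abstractknot}. One must be careful that the reduction to abstract links is itself a $\sim$-move (it is, by the Remark), and that the composite of homotopy equivalences along a zig-zag of $\sim$-moves and isotopies is what is needed — since $\cong$ is the equivalence relation \emph{generated} by these, a general virtual equivalence is a finite chain of such moves, and invariance along each link of the chain gives invariance overall. Once the deformation-retraction claim is nailed down, the rest is bookkeeping.
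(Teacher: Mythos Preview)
Your approach is correct and matches the paper's: reduce to the two generating moves, handle isotopy trivially, and for $F_1\subset F_2$ exhibit a deformation retraction $h(F_2)-L\to h(F_1)-L$. However, you make the retraction step harder than it is. The detour through abstract links (Remark~\ref{abstractknot}) is unnecessary: the retraction works directly for \emph{any} codimension-$0$ inclusion $F_1\subset F_2$ with $L\subset \mathrm{int}(F_1)\times(0,1)$. The paper simply takes $H$ to be the identity on $h(F_1)$ and, on $(F_2\setminus F_1)\times[0,1]$, a straight-line push onto $(F_2\setminus F_1)\times\{1\}\cup \partial F_1\times[0,1]$ (see Fig.~\ref{vpush}); since $(F_2\setminus F_1)\times\{1\}$ is identified with the cone point, the image lands in $h(F_1)$, and since $L$ is disjoint from the region being moved, $H$ restricts to the complements. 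What you flagged as the ``main obstacle'' is in fact a one-line explicit retraction, and no special hypothesis on $F_1$ (such as being a tubular neighborhood of $\pi(L)$) is needed.
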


\bpr It is immediate that isotopy of $L$ in $F\times [0,1]$ will not change the homotopy type of $h(F)-L$.
Let $F'\subset F$. We wish to show that $h(F)-L$ has the same homotopy type as $h(F')-L$. We will define a deformation retraction $H: h(F)-L\to h(F')-L$
defined as follows: $H$ is the identity on $h(F')$ and it is given by a projection
of $(F-F')\times [0,1]$ onto $(F-F')\times \{1\}\cup \p F'\times I$ as depicted in Fig. \ref{vpush}. Hence, $H$ maps $h(F)-L$ to $h(F')-L.$ It is easy to see that this map is homotopic to the identity and, hence, as a deformation retraction, it is a homotopy equivalence between $h(F)-L$ and $h(F')-L.$
\epr

\begin{figure}
		\centering
			\includegraphics[scale=0.5]{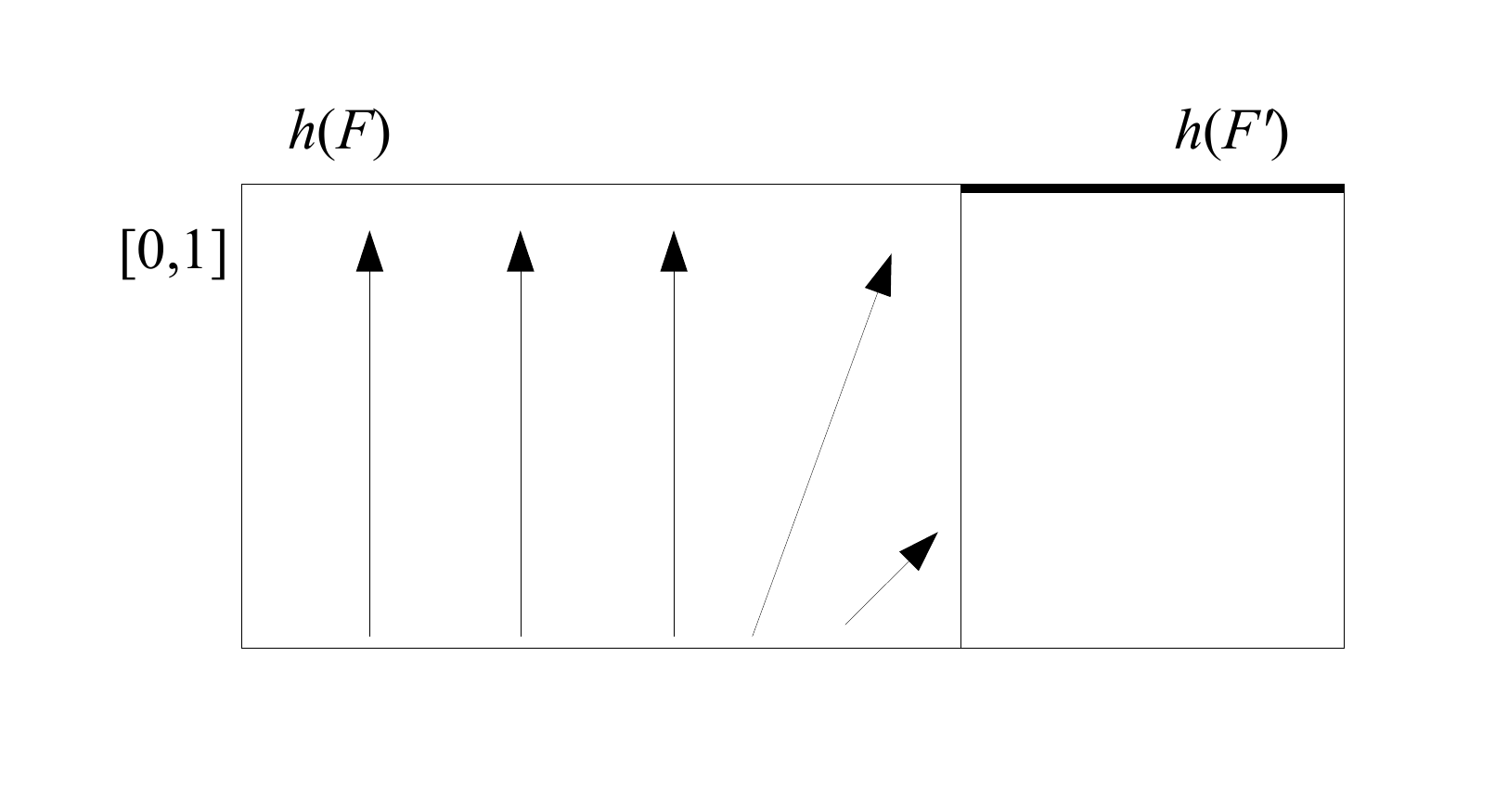}
		\caption{The map $r$ projects points that lie outside a neighborhood of $F'$ straight up, while points in this neighborhood are pushed up and toward the $\partial F'\times[0,1]$.}
		\label{vpush}
\end{figure}

\begin{remark}
For $L\subset D^{n+1}\times [0,1]$, $h(D^{n+1})-L$ is homotopy equivalent to $(D^{n+1}\times [0,1])-L$.
\end{remark}

It follows that any homotopy invariants of links in $D^{n+2}$ extend to invariants of virtual links.

\begin{remark}
We may also consider $F\times [0,1]/r'-L$, where $r'$ is the relation  $(x, 0) r (x', 0)$ for all $x, x'\in F.$ This space is homotopy equivalent to $h(D^{n+1})-L$, for links in $D^{n+2}$, but in general it is not clear whether this will be the case.
\end{remark}

\begin{definition}
(1) We define the link group of a virtual link $[F\times [0,1], L]$ to be $\pi_1(h(F)-L)$.\\
(2) The image of $\pi_1(\p N(L))$ in $\pi_1(h(F)-N(L))$ is called the peripheral subgroup of the virtual link group.\\
(3) For a knot $K\subset F\times [0,1]$, consider a base point $b\in h(F)-K$ which lies in the boundary of $N(K).$ That base point defines the meridian $m\in \pi_1(h(F)-K)$ given by the boundary of a disk in $N(K)$ normal to $K$, passing through $b$. Note that a meridian $m$ is defined up to conjugation in $\pi_1(h(F)-K)$ if a base point is not specified).\\
(4) An oriented closed curve on $l\subset \p N(K)$ passing though $b$ such that the algebraic intersection number between $\pi(l)$ and $\pi(K)$ vanishes in $F$ is called a longitude.
\end{definition}

Note that for virtual $1$-knots a longitude as above is unique and that $m,l$ commute. Furthermore, $m,l$ are uniquely defined up to mutual conjugation by an element of $\pi_1(h(F)-K)$, if the base point is not specified. In other words, $m,l$
define a peripheral structure on $\pi_1(h(F)-K)$.

Observe, however, that higher-dimensional knots may have many longitudes (even if $b$ is specified), which are not conjugate, depending on the topology of the knotted space.




\begin{theorem}
Suppose $(S^{2}\times [0,1], K)\cong (S^{2}\times [0,1], K')$ as elements of $VK_{1}$. Then $K$ and $K'$ are ambient isotopic in $S^{2}\times [0,1]$. \label{ckvk}
\end{theorem}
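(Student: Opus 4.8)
The plan is to show that for classical $1$-knots (i.e.\ elements of $V_1$ of the form $(S^2\times[0,1],K)$), being virtually equivalent forces being ambient isotopic in $S^2\times[0,1]$, so that the natural map from classical knots in the thickened sphere to $VK_1$ is injective. The strategy mirrors the standard argument that classical knots inject into virtual knots: one extracts a complete classical invariant of $K\subset S^2\times[0,1]$ that (a) is visibly a virtual-equivalence invariant, and (b) is strong enough to recover $K$ up to ambient isotopy. The candidate is the triple consisting of the link group $\pi_1(h(S^2)-K)$ together with its peripheral structure (meridian $m$ and longitude $l$), as set up in the Definition preceding the theorem. By Theorem~\ref{hinv} (and the Remark below it, identifying $h(D^{n+1})-L$ with $(D^{n+1}\times[0,1])-L$ for links in $D^{n+2}$), this peripheral structure is a virtual-equivalence invariant, and for a classical knot $(S^2\times[0,1],K)$ it agrees with the usual knot group and peripheral structure of $K$ regarded as a knot in $S^3$ (capping off $S^2\times[0,1]$ with two $3$-balls, equivalently passing to $h(S^2)$).

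First I would make precise the identification: a knot $K\subset S^2\times[0,1]$ is the same datum as a knot in $S^3=h(S^2)\cup(\text{cone on }S^2\times\{0\})$, up to isotopy, and two such knots are ambient isotopic in $S^2\times[0,1]$ iff they are ambient isotopic in $S^3$ — this is exactly the parenthetical remark made early in the paper that a link is isotopic in $\R^{n+1}\times[0,1]\subset S^{n+2}$ iff it is isotopic in $S^{n+2}$, applied with $n=1$. Next I would invoke the virtual invariance: since $(S^2\times[0,1],K)\cong(S^2\times[0,1],K')$, Theorem~\ref{hinv} and the accompanying remarks give an isomorphism $\pi_1(h(S^2)-K)\to\pi_1(h(S^2)-K')$ carrying meridian to meridian and longitude to longitude, i.e.\ an isomorphism of the classical knot groups of $K$ and $K'$ in $S^3$ respecting peripheral structure. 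Finally I would appeal to the classical rigidity result already cited in the paper — the combination of Waldhausen's theorem (\cite{Wald}) on sufficiently large irreducible $3$-manifolds and the Gordon--Luecke theorem (\cite{GL}) that knots in $S^3$ are determined by their complements — to conclude that the peripheral-structure-preserving group isomorphism is induced by a homeomorphism of knot complements, hence $K$ and $K'$ are ambient isotopic in $S^3$, hence in $S^2\times[0,1]$.

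The one genuine subtlety — and the step I expect to be the main obstacle — is checking that the homotopy-type invariant of Theorem~\ref{hinv} really does carry the full peripheral structure across the virtual equivalence, not merely the abstract isomorphism type of the fundamental group. The equivalence $\cong$ on $V_1$ is generated by two moves: isotopy of $K$ (harmless, visibly preserving everything), and inclusions $f\colon F_1\hookrightarrow F_2$ with $(f\times\mathrm{id})(K_1)=K_2$. For the latter one must verify that the deformation retraction $H\colon h(F_2)-K_2\to h(F_1)-K_1$ built in the proof of Theorem~\ref{hinv} restricts to a map of boundary tori $\partial N(K_2)\to\partial N(K_1)$ inducing the identity on $\pi_1$, so that it sends meridian to meridian and (because the retraction is the identity near $K$ and only reshuffles the complementary region) sends a longitude — characterized by vanishing algebraic intersection with $\pi(K)$ — to a longitude. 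Since $N(K)$ can be taken to lie over a tubular neighborhood of $\pi(K)$ inside $F_1$, the retraction $H$ is literally the identity on a neighborhood of $N(K_2)=N(K_1)$, so this is in fact immediate once stated carefully; the work is purely in spelling out that $H$ can be chosen to fix $N(K)$ pointwise. With that in hand, the rest is assembly of quoted results, and I would keep the written proof correspondingly short: identify $S^2\times[0,1]$-isotopy with $S^3$-isotopy, cite Theorem~\ref{hinv} for invariance of the peripheral structure, and cite \cite{Wald} together with \cite{GL} to finish.
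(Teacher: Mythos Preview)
Your proposal is correct and follows essentially the same route as the paper: invoke the virtual invariance of the knot group together with its peripheral structure (via Theorem~\ref{hinv} and the definitions following it), identify knots in $S^2\times[0,1]$ with knots in $S^3$, and then cite Waldhausen and Gordon--Luecke to conclude. The paper's proof is a single terse paragraph asserting exactly these three points; your extra care about why the deformation retraction preserves meridian and longitude is a welcome elaboration of a step the paper leaves implicit.
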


\bpr The knot group, meridian, and longitude of $K\subset h(F)$ are preserved under virtual equivalences. Since by a theorem of Waldhausen, \cite{Wald}, and a result of Gordon and Luecke, \cite{GL}, these form a complete invariant for knots in $S^{3}$, and since knots in $S^{3}$ are equivalent to knots in $S^{2}\times [0,1]$, the theorem follows.
\epr

Joyce's geometric definition of the fundamental quandle for a codimension-$2$ link, \cite{DJ}, recalled in Section \ref{geoqu}, applies to links in spaces $h(F)$ despite the fact that $h(F)$ is not a manifold.  
In Joyce's construction, we consider paths $\gamma$ from a basepoint $x_0\in F\times [0,1]-L$ to $\partial N(L)$, up to homotopy through such paths. Note that $\partial N(K)\cong K\times S^1$, a circle bundle over $L$.  Given two such paths, $\gamma$ and $\gamma'$, we define $\gamma^{\gamma'}$ to be the path $\gamma\circ\overline{\gamma'}\circ m\circ \gamma'$, where $m$ is the path circling the fiber of $\partial N(L)$ at the terminal point of $\gamma$, as illustrated in Fig. \ref{qop}. The proof that this defines a quandle, and the theorems relating the quandle of $L\subset h(F)$ to the fundamental group of $h(F)-L$ and its peripheral structure when $L$ is an $n$-knot, follow without change from those given in Section \ref{geoqu}.

\begin{definition}
We define the link quandle of a virtual link $[F\times [0,1], L]$ to be the quandle of $L\subset h(F)$, $Q(h(F), L)$.
\end{definition}

\section{Computation of the Knot Group and Quandle}

Although we have given a purely geometric definition of the group and the quandle of virtual $n$-links, by using the pair of spaces $(h(F), L)$, it is often convenient to compute these invariants by using link diagrams on $F$. 
We will give an algorithm for doing so, and show that the result does not depend upon performing an isotopy of $L$ or on changing $(F\times [0,1],L)$ by virtual equivalence. The algorithm given here and its independence upon isotopy is discussed in \cite{PR}, however, in that paper, changes to $(F\times [0,1],L)$ by virtual equivalence are not considered.

Let $[F\times [0,1], L]\in V_n/\cong$. We may take a representative $(F\times [0,1], L)$ for this equivalence class such that $L$ projects to a link diagram $d$ on $F\times \{0\}$. Recall that in Section \ref{JR} we constructed the group $G(d)$ and the quandle $Q(d)$ from the faces of diagram $d$ of $L$ and from its double point set.

\begin{theorem}
$G(d)\cong \pi_{1}(h(F) - L)$ and $Q(d)\cong Q(h(F), L)$ for a virtual link $(F\times [0,1], L)$, with $L$ in general position with respect to $\pi$, represented by a diagram $d\subset F.$
\end{theorem}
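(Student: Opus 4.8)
The plan is to reduce the statement to Theorem~\ref{KW} by means of Remark~\ref{abstractknot} and Theorem~\ref{hinv}. First I would observe that, by Remark~\ref{abstractknot}, the representative $(F\times[0,1],L)$ is virtually equivalent to the abstract link $(F'\times[0,1],L)$ where $F'$ is a regular (tubular) neighborhood of $\pi(L)$ in $F$; since $F'$ deformation retracts onto the diagram graph $\pi(L)$ which is simply connected only when $\pi(L)$ is a tree, this is not yet enough, so instead I would pass to the pair $(h(F),L)$ and use Theorem~\ref{hinv}: the homotopy type of $h(F)-L$, and hence $\pi_1(h(F)-L)$, is unchanged if we replace $F$ by any $F'\subset F$ containing $\pi(L)$. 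The key point is that in the quotient $h(F)$, the whole top $F\times\{1\}$ is crushed to a point, so $h(F)$ is simply connected regardless of the topology of $F$; more is true — I would show $h(F')$, for $F'$ a neighborhood of the diagram, is a simply connected space that plays the role of the ``ambient ball'' $D^{n+2}$ in the diagram-based Wirtinger computation.

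The main work is then to run the Van Kampen argument of \cite{KamaWirt} (Theorem~\ref{KW}) in the space $h(F')-L$ rather than in $D^{n+2}-K$. Concretely: cover $h(F')-L$ by an open set $U_0$ which is a neighborhood of the crushed cone point together with the ``upper'' half $F'\times(\tfrac12,1]$ minus $L$ (this is contractible, since it deformation retracts to the cone point), and by open sets $U_x$, one for each face of the diagram $d$, consisting of a collar of that face pushed slightly below the diagram level. Each $U_x$ is contractible and contributes one generator; the intersections $U_0\cap U_x$ are connected (they meet in the upper region), and the triple intersections occurring along a component $\gamma$ of the pure double point set $D_0$ produce exactly the Wirtinger relation $x=yzy^{-1}$ recorded in Section~\ref{JR}, by the same local model that \cite{KamaWirt} uses. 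Iterating Van Kampen over the faces yields $\pi_1(h(F')-L)\cong G(d)$; combining with Theorem~\ref{hinv} gives $\pi_1(h(F)-L)\cong G(d)$. For the quandle statement one repeats the identical covering argument using the Van Kampen theorem for quandles, \cite[Theorem 13.1]{DJ}, with the basepoint taken on $\partial N(L)$ and the generators being the paths to the faces; this is the content of the ``proof for quandles is analogous'' remark following Theorem~\ref{KW}, and it identifies $Q(h(F),L)$ with $Q(d)$.

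Finally I would verify independence of the choices: independence of the isotopy of $L$ within $F\times[0,1]$ is immediate since isotopic links have ambient-isotopic diagrams related by Roseman moves, and $G(d)$, $Q(d)$ are Roseman-move invariant (this is exactly the discussion in \cite{PR}); independence under virtual equivalence $(F\times[0,1],L)\cong(F'\times[0,1],L')$ follows because such an equivalence is generated by isotopy of $L$ and by the inclusions $F_1\hookrightarrow F_2$, and for an inclusion the induced map on diagrams changes $\pi(L)$ only outside a neighborhood of itself, which by Theorem~\ref{hinv} does not affect $\pi_1(h(F)-L)$ or $Q(h(F),L)$, while $G(d)$ and $Q(d)$ depend only on the faces and double-point set of $d$, which are intrinsic to a neighborhood of $\pi(L)$.

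The step I expect to be the main obstacle is the bookkeeping in the Van Kampen induction — specifically, checking that the open cover of $h(F')-L$ by face-collars plus the crushed upper region has all the required connectivity of double and triple intersections, in arbitrary dimension $n$, and that the local relation read off along each component $\gamma$ of $D_0$ is precisely the stated Wirtinger relation with the correct handedness determined by the coorientation of the overcrossing face. This is routine in spirit (it mirrors \cite{KamaWirt}), but the non-manifold nature of $h(F')$ near the cone point and the presence of branch points and triple points of the diagram require one to check that those lower-dimensional strata of $D^*$ do not introduce extra relations — which is handled exactly as in the classical higher-dimensional Wirtinger presentation, since branch and triple loci have codimension $\geq 2$ in $L$ and so do not disconnect the relevant intersections.
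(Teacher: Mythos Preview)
Your approach is essentially the paper's: both run the Van Kampen/Wirtinger argument of \cite{KamaWirt} directly in $h(F)-L$, the key point being that the cone over $F\times\{1\}$ supplies the simple-connectivity that Theorem~\ref{KW} would otherwise demand of $F$. The paper's decomposition is marginally more direct --- after isotoping $L$ so that the faces lie in $F\times[1/2,1)$ and a neighborhood of the double-point set lies in $F\times(0,1/2)$, it takes the single upper piece $h(F,L)_+=h(F)\setminus(F\times[0,1/2])\setminus L$ (a contractible space with the faces removed, so $\pi_1$ is free on the faces) and then glues in the double-point neighborhoods to obtain the Wirtinger relations --- so your preliminary reduction to $F'$ via Theorem~\ref{hinv} and the closing independence checks are unnecessary, and your description of the per-face sets $U_x$ as simultaneously contractible and ``contributing one generator'' should be reconciled with this picture, but the strategy is the same.
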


\begin{proof}
The proof follows from the Van Kampen theorem for link groups and quandles; see \cite{KamaWirt, PR} for the details. We will summarize the proof for the link group; the proof for quandles is almost identical.
Let $(F\times [0,1], L)\in V_n$ with $L$ in general position with respect to $\pi:F\times [0,1]\rightarrow F$.
Let $N$ be an open tubular neighborhood of $D$ in $L$ (where $D$ is the double point set; recall that $B\subset D'$). Then $\pi|_{L-N}$ is an embedding into $F$. 
By an isotopy of $L$ which keeps $\pi(L)$ fixed, we can move $(L-N)$ into $F\times [1/2, 1)$, while moving the interior of $N$ into $F\times (0, 1/2)$. Note that each component of $L-N$ naturally corresponds to a face in the diagram of $L$ on $M$.
Let $h(F, L)_+=h(F)-(F\times [0, 1/2])-(L\cap F\times (1/2, 1]))$. This is a contractible space with the faces of $L$ removed from it.
Then $\pi_1(h(F, L)_+)$ is a free group on the faces in the diagram of $L$.
To complete the computation of the link group, we follow the method of \cite{KamaWirt}. For each component of the set of pure double points $D_0$, we must glue on a neighborhood of $N$. Such a neighborhood is shown in \cite{KamaWirt} to have a trivialization which is determined by the four sheets meeting at this double point set. Because $F$ and $L$ are orientable, these sheets can be co-oriented in the diagram of $L$ on $F$. Using the Van Kampen theorem inductively on each component, it is shown in \cite{KamaWirt} that each component of the pure double point set in the diagram contributes a pair of relations between the generators of $\pi_1(h(F, L)_+)$ with the form $w=y, x=yzy^{-1}$, where $w$ and $y$ are the sheets of the overcrossing face, and the normal vector to $y$ pointing toward $x$. Furthermore, no additional relations are introduced from the points in the double point set that are not pure double points.
Thus, $\pi_1(h(F)-L)$ has a presentation whose generators are the faces of a diagram of $L$ on $F$, and with one relation for each meeting of three faces in a double point set as described in the definition of the group $G(\pi(L))$.
\end{proof}

\begin{remark}\label{Hcyclic}
Note that for a link diagram $d$ of $L\subset F\times [0,1]$, all generators in $G(d)$ corresponding to faces of the same connected component of $L$ are conjugate. From that, it is easy to see that the abelianization of $G(d)$ is $\Z^n$, where $n$ is the number of connected components of $L.$ Consequently, $H_1(F\times [0,1]-L)=\Z^n.$
\end{remark}


The computations of $\pi_1(h(F)-L)$ and of $Q(h(F),L)$ via diagrams suggest that many invariants defined for classical $n$-links by referencing their sheets and double point sets and proving invariance under Roseman moves can be extended to invariants of virtual $n$-links. As an example, we can extend the biquandle invariant of $2$-links to virtual $2$-links. The biquandle invariant for $2$-links is defined in \cite{Carrell}, and shown to be an invariant. Given a $2$-link $L$ in $F\times [0,1]$, Carrell constructed a biquandle $B(F\times [0,1], L)$ which is generated by the sheets (rather than the faces, as in the case of the quandle or group) of $L$'s diagram. For each double point curve in the diagram separating sheets $a$ and $d$, as shown in Fig. \ref{biquand}, he introduced the following relations $c=b_{a}, d=a^{b}$. The resulting biquandle was shown to be well-defined and independent of the Roseman moves in \cite{Carrell}. Given a pair $(F\times [0,1], L)\in V_{2}$, we can define $B(F\times [0,1], L)$ in an identical manner.

\begin{figure}[htbp]
\begin{centering}
\includegraphics[scale=0.3]{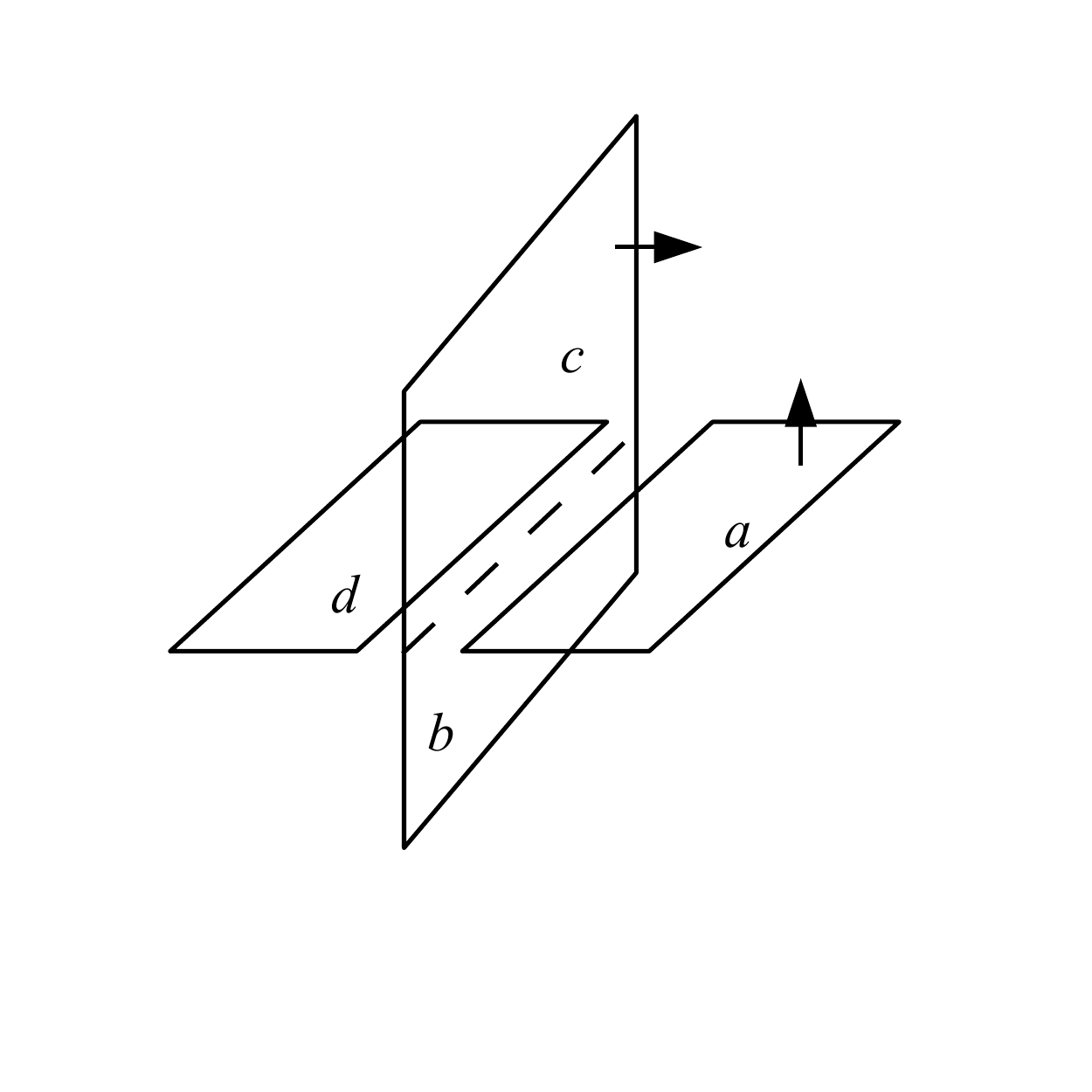}
\caption{A double point curve as shown yields relations $c=b_{a}, d=a^{b}$ on the biquandle $B(F\times [0,1], L)$}
\label{biquand}
\end{centering}
\end{figure}

\begin{theorem}
The biquandle $B(F\times [0,1], L)$ is invariant under virtual equivalence.
\end{theorem}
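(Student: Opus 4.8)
The statement asserts that Carrell's biquandle $B(F\times[0,1],L)$, defined combinatorially from a broken surface diagram $d$ of $L$ on $F$, depends only on the virtual equivalence class $[F\times[0,1],L]\in V_2/\!\cong$. The equivalence relation $\cong$ on $V_2$ is generated by two kinds of moves: (i) smooth isotopy of $L$ inside a fixed $F\times[0,1]$, and (ii) the relation $\sim$, i.e. replacing $(F_1\times[0,1],L_1)$ by $(F_2\times[0,1],L_2)$ via an embedding $f:F_1\hookrightarrow F_2$ with $(f\times\mathrm{id})(L_1)=L_2$. So I would prove invariance under each of these two moves separately.

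For invariance under isotopy of $L$: Carrell's theorem (quoted from \cite{Carrell}) already gives that $B(F\times[0,1],L)$ is unchanged when the diagram $d$ is altered by a Roseman move or an ambient isotopy of $F$; and by Roseman's theorem (Prop.~2.9 of \cite{Roseman2}, cited above in the excerpt) any two diagrams of ambient-isotopic $2$-links in $F\times[0,1]$ differ by a finite sequence of such moves. Since isotopy of the image of $L$ is equivalent to ambient isotopy of $F\times[0,1]$ (as noted in the Introduction, following \cite{Roseman2}), this handles case (i) — but one must first arrange that $L$ is in general position with respect to $\pi$, which is possible by Theorem~\ref{Rthm}, and note that any two general-position representatives in the isotopy class are connected through Roseman moves. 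So case (i) reduces cleanly to results already in hand.

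For invariance under $\sim$: suppose $f:F_1\hookrightarrow F_2$ with $(f\times\mathrm{id})(L_1)=L_2$. Put $L_1$ in general position with respect to $\pi_1:F_1\times[0,1]\to F_1$ and obtain a diagram $d_1$ on $F_1$; then $f(d_1)$ is literally a diagram for $L_2$ on $F_2$, in general position with respect to $\pi_2$, because $f$ commutes with the two projections (the bundle structures are the canonical product ones, and $f\times\mathrm{id}$ respects them). The crucial point is that $f$ is a diffeomorphism onto its image, hence induces a bijection between the sheets, double point curves, triple points, and branch points of $d_1$ and those of $f(d_1)$, preserving all the incidence data (which sheet is over/under at each double curve, co-orientations, etc.). Therefore the combinatorial data from which Carrell's presentation of $B$ is read off — generators indexed by sheets, relations $c=b_a,\ d=a^b$ at each double point curve — is carried bijectively from $d_1$ to $f(d_1)$, so the two presentations are identical and $B(F_1\times[0,1],L_1)\cong B(F_2\times[0,1],L_2)$. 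I would remark that $f$ need not be onto $F_2$: extra components of $F_2\setminus f(F_1)$ carry parts of $L_2$, but since $\sim$ forces $(f\times\mathrm{id})(L_1)=L_2$ as sets, there are no such extra parts of $L_2$, only possibly extra topology in $F_2$ away from $\pi_2(L_2)$, which contributes nothing to the sheet/double-curve data.

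**Main obstacle.** The genuinely delicate step is case (i): verifying that Carrell's invariance proof, which in \cite{Carrell} is stated for broken surface diagrams in $\R^3$ (or $S^4$), goes through verbatim for diagrams on an arbitrary thickened surface $F\times[0,1]$. This is the same subtlety flagged after Theorem~\ref{Rthm} and Theorem~\ref{KW} in the excerpt — one needs the Roseman-move generation theorem in the thickened-surface setting (available via Lemma~7.1 of \cite{PR}) and needs to check that each Roseman move is supported in a disk $D^3\subset F$, so that Carrell's local biquandle computations are unaffected by the global topology of $F$. The other steps are essentially bookkeeping: matching up the combinatorial structures of diagrams under the embedding $f$, which is routine once one observes $f$ is a diffeomorphism onto its image intertwining the projections.
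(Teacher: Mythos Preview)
Your proposal is correct and follows essentially the same two-step approach as the paper: invariance under isotopy via Carrell's Roseman-move verification, and invariance under the embedding relation $\sim$ by observing that the biquandle presentation depends only on the sheets and double point curves, which are carried bijectively by $f$. You are in fact more careful than the paper in flagging the need for Roseman's move-generation theorem to hold over arbitrary $F$ (handled via \cite{PR}) and the locality of Carrell's computations; the paper's proof simply asserts both steps without addressing this subtlety.
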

\bpr The proof requires two steps. First, the biquandle must be shown to be invariant under Roseman moves on the diagram of $L$, which involves checking the effect of each Roseman move on the biquandle. This is verified in \cite{Carrell}. Then we must show that it does not change under virtual equivalences that change $M$. This, however, is immediate from the definition, which depends only on the sheets, faces, and double point curves. \epr

Biquandles thus define invariants of both virtual surface links and classical surface links.

\begin{question}
Is it possible to give a purely geometric definition of $B(F\times [0,1], L)$ which does not use link diagrams?
\end{question}

\begin{question}
For $n\geq 3$, it is possible to define a biquandle using sheets and crossing sets in a diagram to define generators and relations in a similar manner. Is this biquandle well-defined and invariant for $n$-links?
\end{question}

The group, quandle, and biquandle invariants can be difficult to apply in practice due to the difficulty involved in determining whether two groups, quandles, or biquandles are isomorphic. For this reason in practice it is desirable to use other invariants which may be more easily compared. One such invariant, defined for any $n$-link and which we can extend to virtual $n$-links, is the notion of a virtual link $(F\times [0,1], L)$ being colorable by a finite quandle $R$, meaning that there is a surjective homomorphism $Q(h(F), L)\rightarrow R$. Quandle colorings can be used to construct other invariants, such as the quandle cocycle invariants, \cite{PR}, (see \cite{CKS} for additional discussion of cocycle invariants in the case of $2$-links). These coloring and cocycle invariants have the advantage of being more easily compared.

\begin{theorem}
The quandle cocycle invariants defined for $n$-links in \cite{PR} are invariants for virtual $n$-links.
\end{theorem}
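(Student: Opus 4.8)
The plan is to read off the quandle cocycle invariant from \cite{PR} as a state sum that depends only on two pieces of data attached to a diagram $d$ of $L$ on $F$: the set of quandle colorings of $d$, and a family of locally-defined Boltzmann weights attached to the top-dimensional multiple points of $d$. I would then show that each of these is unaffected by the two moves generating the relation $\cong$, namely a smooth isotopy of $L$ in $F\times[0,1]$ and a stabilization along an embedding $f:F_1\hookrightarrow F_2$ with $(f\times \mathrm{id}_{[0,1]})(L_1)=L_2$.

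First I would recall the construction. Fix a finite quandle $R$ and a quandle $(n{+}1)$-cocycle $\theta$. An $R$-coloring of $d$ is an assignment of elements of $R$ to the faces of $d$ respecting the Wirtinger relation $x=z^y$ at each pure double-point component, i.e.\ exactly an element of $\operatorname{Hom}(Q(d),R)$; to each colored top-dimensional multiple point one associates $\theta$ evaluated on the colors of the surrounding faces, raised to a power $\pm1$ fixed by the co-orientations; the invariant is $\sum_{\mathrm{colorings}}\prod_{\mathrm{mult.\ pts}}\theta(\cdots)^{\pm 1}$. By Theorem \ref{Rthm} every class in $V_n/\!\cong$ admits a representative with $L$ in general position with respect to $\pi$, so the state sum is defined for any virtual $n$-link, and by the diagram computation of $Q(h(F),L)$ recalled above (using $Q(d)\cong Q(h(F),L)$) the coloring set $\operatorname{Hom}(Q(d),R)$ is already a virtual invariant.

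Next I would handle the two moves. For an isotopy of $L$: by \cite[Prop.\ 2.9]{Roseman2} it changes $d$ by ambient isotopies of $\pi(L)$ in $F$ and by Roseman moves, and \cite{PR} establishes that the state sum is unchanged under every Roseman move (this is precisely the classical cocycle-invariance argument), while invariance under ambient isotopy of $F$ is immediate. For a stabilization $f:F_1\hookrightarrow F_2$: the map $f$ carries $\pi(L_1)$ together with its branch, double-point, triple-point, $\dots$ strata, its faces and sheets, their co-orientations, and the signs of the top-dimensional multiple points, homeomorphically onto the corresponding data for $\pi(L_2)$. Since the colorings and all Boltzmann weights are computed solely from these data, all of which live in an arbitrarily small neighborhood of $\pi(L)$ (cf.\ Remark \ref{abstractknot}), the two state sums agree term by term. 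As $\cong$ is generated by these two operations, the invariant descends to $V_n/\!\cong$.

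The main obstacle I anticipate is confirming that the local contributions are genuinely intrinsic, i.e.\ that the sign at each top multiple point and the combinatorial pattern of faces around it depend only on a neighborhood of $\pi(L)$ in $F$ and not on any global feature of $F$ (a homology class, say) that could shift under stabilization. In the orientable category the relevant co-orientations of sheets are pinned down locally by the orientations of $F$ and $L$, so this should go through, but it is the step to verify carefully. A minor secondary point is to ensure \cite{PR}'s Roseman-move invariance is stated for all $n$, or else to supply the routine local check for any move it does not list explicitly.
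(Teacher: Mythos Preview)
Your proposal is correct and follows essentially the same approach as the paper: both argue that the cocycle state sum is built purely from the local diagrammatic data (faces/sheets, double-point strata, co-orientations) of $\pi(L)\subset F$, so that Roseman-move invariance is inherited from \cite{PR} and stabilization invariance is immediate because the ambient $F$ plays no role. The paper's proof is only a two-sentence sketch of exactly this, whereas you spell out the two generating moves of $\cong$ separately and flag the one point worth checking (that the sign conventions at top multiple points are pinned down locally by the orientations of $F$ and $L$); that extra care is welcome but does not change the argument.
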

\bpr The definitions of quandle cocycle invariants are constructed by considering the sheets and double points of a link diagram and showing the result is independent of Roseman moves. The ambient space for the diagram is not relevant to the definition. Therefore, the quandle cocycles are invariant under changes to the diagram by Roseman moves and invariant under virtual equivalence. \epr



\begin{theorem}
Suppose $K\subset F\times [0,1]$ is a ribbon $n$-link, $n\geq 2$, with ribbon solid $R\subset F\times [0,1]$. Then $K$ is virtually equivalent to a ribbon link in $D^{n+1}\times [0,1]$ bounding a ribbon disk with the same ribbon presentation.
\end{theorem}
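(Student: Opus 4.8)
The plan is to exploit the fact that a ribbon $n$-link is, by definition, the boundary of a ribbon solid $R$ — an immersed $(n+1)$-handlebody whose only singularities are ribbon singularities, i.e. transverse intersections of an $(n+1)$-disk with the interior of another along an $n$-disk. The key point is that the entire ribbon presentation is supported in a regular neighborhood of a finite $1$-complex (the spine of $R$, together with the cores of the ribbon-singularity disks), and such a neighborhood, sitting inside $F\times[0,1]$, embeds in a standard ball $D^{n+2}=D^{n+1}\times[0,1]$ because $1$-complexes and their regular neighborhoods in codimension $\geq 2$ are unknotted. First I would take, by Remark~\ref{abstractknot}, a representative in which $F$ is a tubular neighborhood of $\pi(L)$; shrinking further, I would pass to a representative supported in a neighborhood $N$ of the spine of the ribbon solid $R$ (which we may take to contain $K$ and all the ribbon singularities of $R$). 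Since $N$ deformation retracts to a graph $\Gamma$, and $F'\times[0,1]$ can be taken to be a regular neighborhood of $\Gamma$, we get a virtual equivalence $(F\times[0,1],K)\cong(N, K)$ via the relation $\sim$ restricted to the inclusion $N\hookrightarrow F\times[0,1]$.

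Next I would observe that $N$, being a regular neighborhood of a finite graph $\Gamma$ inside a (connected component of a) thickened manifold $F\times[0,1]$, is diffeomorphic to a regular neighborhood of a graph in $\R^{n+1}\times[0,1]$: any two embeddings of a finite graph into a connected $(n+2)$-manifold with the same dimension count are ambiently isotopic after allowing the graph to be moved, because a graph is $1$-dimensional and $n+2\geq 4$, so a general position argument makes any such embedding standard (every edge and vertex can be pushed into a single coordinate chart, which we take to lie in $\R^{n+1}\times(0,1)\subset D^{n+1}\times[0,1]$). Under this identification $N\subset D^{n+1}\times[0,1]$, the ribbon solid $R$, which was built from $\Gamma$ by thickening, carries over to a ribbon solid $R'\subset D^{n+1}\times[0,1]$ with literally the same combinatorial (ribbon) presentation — the same fusion $1$-handles attached to the same trivial pieces in the same pattern — and its boundary $K'=\partial R'$ is a ribbon $n$-link in $D^{n+1}\times[0,1]$ bounding the corresponding ribbon disk (the trace of the ribbon presentation). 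Thus $(N,K)\cong(D^{n+1}\times[0,1], K')$, and by transitivity of $\cong$, $K$ is virtually equivalent to $K'$.

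The main obstacle will be making the middle step rigorous: carefully verifying that the regular neighborhood $N$ of the spine $\Gamma$ really can be chosen to contain $K$ and \emph{all} the ribbon singularities of $R$ (so that the abstract structure of $R$ as a ribbon solid is faithfully captured by the piece $N\cap R$), and that the diffeomorphism $N\cong N'\subset D^{n+1}\times[0,1]$ can be taken to respect the $[0,1]$-bundle structure up to isotopy — i.e. that after the move we again have a genuine element of $V_n$ with the implied product bundle. Here one uses that $\Gamma$ can be isotoped in $F\times[0,1]$ to be transverse to the fibers except at finitely many points, and that a ribbon solid can always be presented so that its spine has this form; the ribbon singularities, being transverse $n$-disk intersections, are local and are carried along unchanged. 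Once this is set up, the ``same ribbon presentation'' claim is essentially a tautology, since a ribbon presentation is by definition this handle-theoretic data on a neighborhood of a graph, and we have only re-embedded that graph.
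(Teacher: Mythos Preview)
Your overall strategy---reduce to a regular neighborhood of the $1$-dimensional spine $\Gamma$ of the ribbon solid and then use the fact that graphs are unknotted in high codimension---is exactly the idea the paper uses. However, there is a genuine gap in how you execute it, and it is precisely the obstacle you flag at the end but do not resolve.

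The relation $\sim$ generating virtual equivalence is defined only via embeddings $f:F_1\to F_2$ of the \emph{base} $(n+1)$-manifolds, with the ambient map being $f\times id_{[0,1]}$. When you take $N$ to be a regular neighborhood of $\Gamma$ inside $F\times[0,1]$, this $N$ is an $(n+2)$-dimensional handlebody which is not, in any evident way, of the form $F'\times[0,1]$ for some $F'\subset F$. So the inclusion $N\hookrightarrow F\times[0,1]$ does not witness a $\sim$-relation, and your claimed equivalence $(F\times[0,1],K)\cong(N,K)$ is not justified. The same issue recurs on the other side: even if $N$ is abstractly diffeomorphic to a regular neighborhood of a graph in $D^{n+1}\times[0,1]$, you need that diffeomorphism to match the product $[0,1]$-bundle structures, and you have not arranged this.

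The paper sidesteps this entirely by working downstairs in $F$ rather than in $F\times[0,1]$. One puts $\Gamma$ in general position with respect to $\pi:F\times[0,1]\to F$, so that $\pi(\Gamma)\subset F$ is a finite graph; after shrinking $R$ toward $\Gamma$ one takes $N$ to be a small closed neighborhood of $\pi(R)$ \emph{in $F$}. Then $(F\times[0,1],K)\sim(N\times[0,1],K)$ is immediate from the definition, because $N\subset F$ is an honest embedding of base manifolds. Now $N$ is an orientable $(n+1)$-dimensional handlebody (regular neighborhood of a graph in an orientable manifold), and any such handlebody embeds in $D^{n+1}$ once $n+1\geq 3$, i.e.\ $n\geq 2$; this embedding of $N$ into $D^{n+1}$ again gives a $\sim$-relation directly. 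The ribbon presentation is carried along automatically since everything happens inside $N\times[0,1]$. So the fix is simple: project the spine to $F$ first and take your neighborhood there, rather than in the total space.
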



\bpr Let $\Gamma$ be a core of the ribbon solid $R$ -- that is a $1$-dimensional CW-complex inside $R$ to which $R$ retracts to.
We can assume that $\Gamma$ is in a general position with respect to the canonical projection $\pi:F\times [0,1]\to F$. Then $\pi(\Gamma)$ is a finite $1$-dim CW-complex and, by ``shrinking'' $R$ towards $\Gamma$ if necessary, we can assume that $\pi(\Gamma)$ is a deformation retract of $\pi(R).$

Furthermore, for a sufficiently small closed neighborhood $N$ of $\pi(R)$ in $F$, that neighborhood deformation retracts to $\pi(\Gamma)$ as well.
Note that $(F\times [0,1],K)$ is equivalent to $(N\times [0,1],K).$ Since $F$ is orientable, $N$ is an orientable $n+1$ dimensional handle body. That handlebody embeds into $D^{n+1}$ for $n\geq 2$. (Note that $n\geq 2$ may be necessary, since $\pi(\Gamma)$ does not have to be a planar graph.
\epr

For virtual knots $K\subset F\times [0,1]$, $H_1(h(F)-K)=\Z$ by Remark \ref{Hcyclic}. It follows that the action of the deck transformations on the homology groups of the universal abelian cover $\widetilde{h(F)-K}$ will make them into $\Z [t, t^{-1}]$ modules. As such, we can construct Alexander-type invariants for virtual $n$-knots.

Theorem \ref{ckvk} is an important result for the application of virtual $1$-links to the study classical links, since it shows that two classical knots in $S^2\times [0,1]$ which are virtually equivalent must be classically isotopic as well. It is thus natural to ask whether an analogous result holds for higher dimensions. As Theorem \ref{hinv} shows, many homotopy invariants of $n$-links in $(n+2)$-balls will extend to invariants of virtual $n$-links. However, for $n\geq 2$, the homeomorphism type of a knot complement does not determine the knot up to isotopy. Examples of inequivalent knotted spheres with homeomorphic complements are given in \cite{Gor}. See \cite[Section 3.1.3]{CKS} for further discussion of this problem.

This means that the homotopy type of $h(F)-K$, while it is a powerful enough invariant to allow many homotopy invariants of classical links to be extended to virtual links, will not allow a complete invariant of classical $n$-links to be extended to virtual $n$-links, even taking the peripheral structure into account. In fact, in the case of tori in a $4$-ball, we can give an example of two virtually equivalent knots which are not isotopic in the $4$-ball. For this example, we must define the \emph{spun torus} for a knot in $\R^{3}$. This is a torus in $\R^{4}$ (or equivalently in $D^{4}$ or $S^{4}$) constructed as follows. Let $\R^{3}_{+}=\{ (x, y, z)\in \R^{3} | x \geq 0\}$ with $K\subset \R^3$ a knot, and isotope $K$ so that $K\subset \R^{3}_{+}$. We can consider an open book decomposition of $\R^{4}$ given by $\R^{3}\times S^{1}/(0, y, z, t)\sim (0, y, z, t')$. Then $K\times S^{1}\subset \R^{3}\times S^{1}/(0, y, z, t)\sim (0, y, z, t')$ is a torus embedded in $\R^{4}$. We will denote this torus as $Spun(K)$.

A similar construction yields the \emph{$n$-turned spun torus} corresponding to $K$. This is defined in the same way except that instead of taking $K\times S^{1}$ directly, we instead rotate $K$ by $n$ complete turns as we circle the open book decomposition, \cite{CKS}. The result of this will be denoted $Spt_{n}(K)$. Note that $Spun(K)=Spt_{0}(K)$. In \cite{Boy}, Boyle defines a construction which shows that for $T$ the trefoil knot, $Spt_{1}(T)$ is not a ribbon knot, using invariants derived from the second homology of the complement of the torus in $S^{4}$ (cf. also the discussion in \cite{CKS}). On the other hand, $Spun(T)$ is ribbon (and in fact, the spun torus for any classical knot is ribbon, \cite{CKS, SS}). Therefore, in particular $Spt_{1}(T)$ and $Spun(T)$ are not isotopic in $S^{4}$ and thus cannot be isotopic in $D^{3}\times [0,1]$. Recall that knots in $S^{4}$, $\R^{4}$, and $D^{4}$ are isotopic in one of these spaces iff they are isotopic in all of them.


\begin{theorem}
Let $K$ be a $1$-knot in $D^{3}$. Then for any $m, n$, $Spt_m(K)$ and $Spt_{n}(K)$, considered as knots in $D^{3}\times [0,1]$, are virtually equivalent.
\end{theorem}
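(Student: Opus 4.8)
The plan is to push $Spt_n(K)$ into a thickened $3$-torus, observe that ``adding one turn'' there is realized by a Dehn twist of the torus factor, and use that such a Dehn twist, crossed with the identity on $[0,1]$, is one of the moves generating $\cong$. Concretely, in the open book of $\R^4$ with pages $\R^3_+$ and binding $\R^2$, place $K$ in a small round ball $B$ in the interior of a page, disjoint from a fixed line $A$ of that page, with $A$ far from the binding relative to the size of $B$; let $\rho^\phi$ denote rotation of the page by angle $\phi$ about $A$, so that $Spt_n(K)=\{(\theta,\rho^{n\theta}(\mathbf k)):\theta\in S^1,\ \mathbf k\in K\}$. Revolving $B$ about $A$ sweeps out a solid torus $V\cong S^1_\psi\times D^2_v$ in the interior of the page, disjoint from the binding, on which $\rho^\phi$ acts by $\psi\mapsto\psi+\phi$; hence $Spt_n(K)\subset S^1_\theta\times V\cong T^2\times D^2$, and in the resulting coordinates $(\theta,\psi,v)$ we have $Spt_n(K)=\{(\theta,\psi_0+n\theta,v_0):\theta\in S^1,\ (\psi_0,v_0)\in K\}$, an embedded torus. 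Writing $D^2=[-1,1]\times[0,1]$ and $F=T^2\times[-1,1]$, this exhibits $(F\times[0,1],Spt_n(K))$ as an element of $V_2$.

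\textbf{The key step.} Let $\tau\colon T^2\to T^2$ be the Dehn twist $\tau(\theta,\psi)=(\theta,\psi-\theta)$ and set $f=\tau\times\mathrm{id}_{[-1,1]}\colon F\to F$, a diffeomorphism (in particular an embedding). Then $f\times\mathrm{id}_{[0,1]}$ sends $(\theta,\psi,v)\mapsto(\theta,\psi-\theta,v)$ and therefore carries $Spt_n(K)$ precisely onto $Spt_{n-1}(K)$. By the definition of $\sim$, the existence of such an $f$ gives $(F\times[0,1],Spt_n(K))\sim(F\times[0,1],Spt_{n-1}(K))$; iterating $f$ and $f^{-1}$ yields $(F\times[0,1],Spt_m(K))\cong(F\times[0,1],Spt_n(K))$ for all $m,n$.

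\textbf{Returning to $D^3\times[0,1]$.} It remains to check $(D^3\times[0,1],Spt_n(K))\cong(F\times[0,1],Spt_n(K))$. Here $D^3\times[0,1]\cong D^4$, the torus $Spt_n(K)$ lies in a $4$-ball, and $T^2\times D^2$ is a tubular neighborhood of the standard torus ($=Spun$ of an unknot) in $S^4$; moreover $F=T^2\times[-1,1]$ embeds standardly in $\R^3\subset D^3$. Using uniqueness of tubular neighborhoods, one may choose the identification $D^4\cong D^3\times[0,1]$ so that it restricts on $T^2\times D^2$ to the product embedding $(F\hookrightarrow D^3)\times\mathrm{id}_{[0,1]}$; this exhibits $(F\times[0,1],Spt_n(K))\sim(D^3\times[0,1],Spt_n(K))$. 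Chaining the three equivalences shows $Spt_m(K)$ and $Spt_n(K)$ are virtually equivalent.

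I expect the real content to lie in the key step: after the knot has been moved into $S^1_\theta\times V=T^2\times D^2$, one extra full turn is undone by a Dehn twist of the $T^2$-factor, and crucially this Dehn twist has the form $f\times\mathrm{id}_{[0,1]}$, hence is an admissible move even though it acts nontrivially on $\pi_1$ of the ambient surface. (This is consistent with the preceding discussion: $n$ cannot be a virtual invariant, although $Spt_n(K)$ does depend on $n$ up to ambient isotopy in $D^4$.) The only delicate point in the last step is matching the two $[0,1]$-bundle structures, which is bookkeeping once one invokes the standardness of unknotted thickened tori in the $4$-ball.
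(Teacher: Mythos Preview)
Your argument is correct and follows the same strategy as the paper: shrink to a smaller $3$-manifold $F$ with $Spt_n(K)\subset F\times[0,1]$, then exhibit a self-diffeomorphism of $F$ that, crossed with $\mathrm{id}_{[0,1]}$, carries $Spt_n(K)$ to $Spt_{n-1}(K)$. The paper takes $F=D^2\times S^1$ (a solid torus, with $S^1=S^1_\theta$ the spinning circle and $D^2$ the disk in the page on which $\rho$ acts) and uses the solid-torus twist $(v,\theta)\mapsto(\rho^{-\theta}(v),\theta)$; your $F=T^2\times[-1,1]$ sits inside this solid torus as $(\text{annulus in }D^2)\times S^1_\theta$, and your Dehn twist is exactly the restriction of the paper's twist to that subspace. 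So the key step is the same idea, just carried out on a slightly smaller $F$.

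One comment on your last step: the appeal to uniqueness of tubular neighborhoods is a bit loose, since there is a framing issue in matching the two $[0,1]$-directions on $T^2\times D^2$. A cleaner fix is to choose the rotation axis $A$ in the page parallel to one Euclidean coordinate axis of $\R^4$, say $x_4$; then a direct computation gives $S^1_\theta\times V=(\text{thickened torus in the }x_1x_2x_3\text{-hyperplane})\times[a,b]_{x_4}$, which exhibits $F\hookrightarrow D^3$ with the correct $[0,1]$-direction and yields the $\sim$-relation immediately. The paper is equally terse here, simply asserting the containment in $D^2\times S^1\times[0,1]$; implicitly it is using the same choice of axis.
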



\bpr
By construction, both $Spt_m(K)$ and $Spt_n(K)$ are contained inside inside a $D^2\times S^1\times [0,1]$ subspace. It follows that $Spt_m(K)$ is virtually equivalent to $(D^2\times S^1\times [0,1], Spt_m(K))$. Likewise, $Spt_n(K)\cong (D^2\times S^1\times [0,1], Spt_n(K))$. It is then straightforward to construct a map $f:D^2\times S^1\rightarrow D^2\times S^1$ such that $f\times id_{[0,1]}$ carries $Spt_m(K)$ to $Spt_n(K)$, by letting $f$ twist the solid torus $D^2\times S^1$ by $n-m$ turns. Then $f$ induces a virtual equivalence between $(D^2\times S^1\times [0,1], Spt_m(K))$ and $(D^2\times S^1\times [0,1], Spt_n(K))$.\epr

It is in fact reasonable to expect that the $Spt_n$ construction will provide many examples of classical $2$-links which are classically distinct but virtually equivalent. Such an example will be produced any time that $Spt_m(K)$ and $Spt_{n}(K)$ are not classically isotopic. However, classifying $Spt_n(K)$ as a classical surface knot for general $K$ and $n$ is an open problem.

\begin{corollary}
There exists a ribbon $2$-knot (specifically the knotted torus $Spun(T)$) in $D^{3}\times [0,1]$ which is virtually equivalent to a non-ribbon knot, $Spt_1(T)$.
\end{corollary}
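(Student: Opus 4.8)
The plan is to combine the immediately preceding theorem with the facts about spun and turned-spun tori recalled in the text just above it. First I would invoke that theorem with $K=T$ the trefoil knot, $m=0$, and $n=1$: it yields at once that $Spt_0(T)$ and $Spt_1(T)$, regarded as knots in $D^3\times[0,1]$, are virtually equivalent. Since $Spun(T)=Spt_0(T)$ by definition, this says precisely that the knotted torus $Spun(T)$ is virtually equivalent to $Spt_1(T)$.

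Next I would recall the two classical facts already cited. On one hand, $Spun(T)$ is ribbon — indeed the spun torus of any classical $1$-knot is ribbon, by \cite{CKS, SS}; in particular $Spun(T)$ bounds a ribbon solid in $D^3\times[0,1]$. On the other hand, $Spt_1(T)$ is not ribbon, by Boyle's computation \cite{Boy} using invariants derived from the second homology of the complement of the torus in $S^4$. Since the ribbon property of a codimension-$2$ knot is an isotopy invariant, and since, as noted in the discussion preceding the theorem, knots in $D^3\times[0,1]$ may be regarded interchangeably with knots in $S^4$, $\R^4$, and $D^4$, this makes $Spun(T)$ and $Spt_1(T)$ genuinely distinct (they are not even classically isotopic), so the pair witnesses the claimed phenomenon: a ribbon $2$-knot virtually equivalent to a non-ribbon one.

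I do not expect a real obstacle here, as the statement is a direct corollary of the previous theorem; the only point requiring any care is the bookkeeping that the notion of ``ribbon'' used for $Spt_1(T)$ in Boyle's setting (a torus in $S^4$) agrees with the notion applicable to the same torus viewed as a knot in $D^3\times[0,1]$, and this is exactly the equivalence of the smooth categories of knots in $S^4$, $\R^4$, $D^4$, and $D^3\times[0,1]$ recalled in the text. No input beyond the cited results and the preceding theorem is needed.
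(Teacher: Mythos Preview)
Your proposal is correct and is exactly the intended argument: the paper states this as an immediate corollary of the preceding theorem (applied with $K=T$, $m=0$, $n=1$) combined with the facts, recalled just before, that $Spun(T)$ is ribbon while Boyle showed $Spt_1(T)$ is not. No further proof is given in the paper, and your write-up faithfully fills in these steps.
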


On the other hand, the above examples only involve knotted tori, and their construction makes use of embedding a solid torus into $S^4$ with a ``twist'' that is not detected by virtual knots. It is, therefore, still an open question whether there are examples of knotted spheres in $D^{3}\times [0,1]$ which are virtually equivalent but not classically isotopic.

\begin{conjecture}
For $n\geq 2$ there are knotted $n$-spheres in $D^{n+1}\times [0,1]$ which are virtually equivalent but not classically isotopic.
\end{conjecture}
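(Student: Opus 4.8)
The plan is to produce the required spheres by imitating the mechanism behind the $Spt_n$ examples: exhibit a pair of knotted $n$-spheres that differ by a ``twist'' of an ambient thickened surface which becomes trivial once that surface is re-embedded, but which is not undone by any classical isotopy. Since every virtual equivalence preserves the homotopy type of $h(F)-K$ (Theorem \ref{hinv}), and hence the link group, quandle, peripheral structure, and the Alexander-type invariants of Section \ref{s-homotopy-inv}, the two spheres we want must have homotopy equivalent complements --- and, for their realizable representatives, homeomorphic complements. For $n\ge 2$ this is not an obstruction: as already discussed, the complement of a knotted $n$-sphere, even with its peripheral structure, does not determine the knot up to isotopy (\cite{Gor}). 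So the first step is to fix a non-simply-connected compact orientable $(n+1)$-manifold $F$ that embeds in $S^{n+1}$ (for instance a handlebody, as in the ribbon theorem above), a knotted $n$-sphere $K\subset F\times[0,1]$, and a self-diffeomorphism $g$ of $F$ such that, after including $F$ standardly in $D^{n+1}$ via some $\iota\colon F\hookrightarrow D^{n+1}$, the knots $K$ and $K':=(g\times\mathrm{id}_{[0,1]})(K)$ are not ambient isotopic in $D^{n+1}\times[0,1]$.

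Granting such data, the virtual equivalence is then automatic. The diffeomorphism $g\times\mathrm{id}$ realizes the generating relation $\sim$, so $(F\times[0,1],K)\cong(F\times[0,1],K')$; composing with $\iota\colon F\hookrightarrow D^{n+1}$ (which exists for $n\ge 2$ since $F$ is an orientable handlebody) gives $(D^{n+1}\times[0,1],(\iota\times\mathrm{id})K)\cong(F\times[0,1],K)\cong(F\times[0,1],K')\cong(D^{n+1}\times[0,1],(\iota\times\mathrm{id})K')$. Hence the two resulting classical knots in $D^{n+1}\times[0,1]$ are virtually equivalent, and the entire content of the conjecture lies in arranging that they are \emph{not} classically isotopic.

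Before hunting for $F$, $K$, $g$ one should note that the most obvious analog of the $Spt_n$ construction fails: twist-spinning a knotted arc $\kappa\subset B^3$ to a family of $2$-spheres $\tau_m\kappa$ does not help, because twist-spins are fibered and $\pi_1(S^4\setminus\tau_m\kappa)$ genuinely varies with $m$ (for example $\tau_1\kappa$ is unknotted while $\tau_0\kappa$ is the spun knot), so Theorem \ref{hinv} already forbids $\tau_m\kappa\cong\tau_n\kappa$. The moral is that the twist $g$ must be chosen \emph{invisible} to the homotopy type of the complement: for $n=2$ the natural candidates are Gluck-type twists along $2$-spheres, and more generally re-gluings $C\cup_{\phi\circ h}(S^n\times D^2)$ of a fixed complement $C$ along a self-homeomorphism $h$ of $S^n\times S^1=\partial N(K)$ that does not extend over $S^n\times D^2$; Gordon's examples of knotted spheres with homeomorphic complements but distinct isotopy type are of exactly this shape. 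So the concrete plan is: (1) take such a pair $K_1,K_2\subset S^{n+2}$; (2) reinterpret the re-gluing datum $h$ as a self-diffeomorphism of a suitable thickened $(n+1)$-manifold $F\times[0,1]$ --- built from a collar of $\partial N(K_i)$ together with enough of $C$ and a ``page'' --- into which both $K_1$ and $K_2$ can be pushed compatibly; (3) read off the virtual equivalence from the resulting chain in $V_n$ exactly as in the second paragraph.

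\textbf{The main obstacle} is step (2): reconciling the abstract re-gluing combinatorics of Gordon's construction with the very rigid moves permitted in $V_n$ --- embeddings $f\colon F_1\to F_2$ carrying $L_1$ onto $L_2$, together with isotopy. It is by no means automatic that two elements of $V_n$ with the same link complement are $\cong$-equivalent, and the delicate point is to show that the re-gluing $h$ can be realized by a twist of an ambient $(n+1)$-manifold that \emph{still embeds in} $S^{n+1}$; that is precisely the step at which $h$ could refuse to be trivialized. What the argument ultimately turns on is a classification of those self-homeomorphisms $h$ of $S^n\times S^1$ that (a) yield a knotted sphere with the same complement and the same $h(F)-K$ homotopy type, yet (b) become isotopically trivial after a single stabilization of the ambient surface; I would expect the relevant $h$'s to be those arising from $\pi_1(\mathrm{Diff}(S^n))$ or from spinning a lower-dimensional diffeomorphism, and exhibiting one such $h$ that genuinely changes the isotopy type of $K$ for a well-chosen complement $C$ is the crux on which the conjecture stands or falls.
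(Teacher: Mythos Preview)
This statement is a \emph{conjecture} in the paper, not a theorem: the paper offers no proof, only the remark that Gordon's infinite family of knotted $2$-spheres with homeomorphic complements ``may be good candidates for proving this conjecture.'' There is therefore no proof to compare your proposal against.

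Your proposal is likewise not a proof, and you are candid about this: you lay out a three-step plan and then correctly flag step (2) --- realizing a Gordon-type re-gluing $h$ of $\partial N(K)$ as a self-diffeomorphism of some thickened $(n+1)$-handlebody $F\times[0,1]$ that still embeds in $D^{n+1}$ --- as the unresolved crux. That is an honest assessment; nothing in the paper supplies this step either. Your strategic analysis is in fact more detailed than the paper's: you correctly rule out the naive analog (twist-spinning an arc) by observing that $\pi_1$ genuinely varies with the twist parameter there, so Theorem~\ref{hinv} already obstructs virtual equivalence; and you correctly isolate the Gluck-type twists as the natural candidates for $h$. This is exactly the sort of reasoning the paper gestures at but does not develop.

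The genuine gap, which you have already named, is that it is unclear whether any Gordon pair $(K_1,K_2)$ can be arranged to sit inside a common $F\times[0,1]$ with $F\hookrightarrow D^{n+1}$ in such a way that the re-gluing becomes an honest $\sim$-move. Gordon's construction glues along $\partial N(K)\cong S^n\times S^1$, which is an $(n+1)$-manifold but not one that bounds in $S^{n+1}$; promoting this to a twist of a handlebody that does embed is not automatic and may require additional input (e.g.\ a fibration or open-book structure on the complement, as in the torus case). Until that bridge is built, the conjecture remains open, in the paper and in your write-up alike.
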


As stated previously, the homeomorphism type of a knot complement does not necessarily determine the knot for $2$-knots. This is true even for $2$-spheres: there is an infinite family of pairs of knotted $2$-spheres such that the spheres in each pair are non-isotopic but have homeomorphic complements, \cite{Gor}. Such knots may be good candidates for proving this conjecture.

\section{A Method for Constructing Virtual Links}\label{spinning}
Let $N$ be a fixed $n$-manifold. Then we can define a map $\Phi_N:V_m\rightarrow V_{m+n}$. Given a pair $(F\times [0,1], L)\in V_m$, $L$ is embedded in $F\times [0,1]$ by some map $f$. Then we define $\Phi_N(F\times [0,1], L)$ to be the pair $(F\times N\times [0,1], L\times N)$, where $L\times N$ is embedded into $F\times N\times [0,1]$ by the map $f\times id_N\times id_{[0,1]}$. It is easy to see that this map descends to a map $V_m/\cong\rightarrow V_{m+n}$.

\begin{conjecture}\label{VK_m+n-conj}
For any $N$, the map $\Phi_N:V_m\rightarrow V_{m+n}$ has the property that $\Phi(F\times [0,1], L)\cong \Phi(F'\times [0,1], L')$ only if $(F\times [0,1], L)$ and $(F'\times [0,1], L')$ are virtually equivalent up to mirror images and orientation reversal of $L$.
\end{conjecture}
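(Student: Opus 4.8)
The plan is to establish the conjecture by showing that all of the standard invariants of $(F\times[0,1],L)$ introduced above are carried canonically by $\Phi_N(F\times[0,1],L)$, and then arguing that this data pins down $(F\times[0,1],L)$ up to the stated ambiguities. First I would normalise: by Remark \ref{abstractknot} every class in $V_{m+n}/\!\cong$ has an abstract representative, and $\Phi_N$ preserves abstractness, since, writing $\pi'=\pi\times\mathrm{id}_N$, if $\pi(L)$ is a deformation retract of $F\times[0,1]$ then $\pi'(L\times N)=\pi(L)\times N$ is a deformation retract of $F\times N\times[0,1]=(F\times[0,1])\times N$. So one may take $F$ to be a regular neighbourhood of the diagram $d=\pi(L)$, in which case the diagram of the product link is the product diagram $d\times N\subset F\times N$, with the over/under markings of $d$ copied along $N$.

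The heart of the matter is the computation of the complement of the product. One has $(F\times N\times[0,1])-(L\times N)=\bigl((F\times[0,1])-L\bigr)\times N$, so, writing $Y=(F\times[0,1])-L$, the space $h(F\times N)-(L\times N)$ is obtained from $Y\times N$ by collapsing the top face $(F\times\{1\})\times N$ to a point. Van Kampen then gives $\pi_1\bigl(h(F\times N)-L\times N\bigr)\cong\bigl(\pi_1(Y)\times\pi_1(N)\bigr)/\langle\langle\,\mathrm{im}\,\pi_1(F)\times\pi_1(N)\,\rangle\rangle\cong\pi_1(Y)/\langle\langle\,\mathrm{im}\,\pi_1(F)\,\rangle\rangle=\pi_1(h(F)-L)$: the $N$-directions are killed by the cone, and the virtual link group is unchanged by $\Phi_N$. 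The same bookkeeping, applied to $\partial\overline{N(L\times N)}=\partial\overline{N(L)}\times N$, identifies the peripheral subgroup and meridian of $L\times N$ with those of $(F\times[0,1],L)$; hence by Theorem \ref{main} the link quandle is preserved, as is the biquandle in the cases where it is defined (the product broken-surface diagram $d\times N$ yields exactly the biquandle relations of $d$), and so are the $\Z[t,t^{-1}]$-module Alexander invariants, via Remark \ref{Hcyclic}. Finally, $L\times N$ — and hence $L$, up to the usual non-cancellation ambiguities — is remembered as the diffeomorphism type of the abstract manifold underlying the product link. Thus, if $\Phi_N(F_1\times[0,1],L_1)\cong\Phi_N(F_2\times[0,1],L_2)$, then $(F_1\times[0,1],L_1)$ and $(F_2\times[0,1],L_2)$ agree in all of these invariants.

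The remaining step — and the genuine obstacle — is to upgrade "agrees in all of these invariants" to "virtually equivalent up to mirror image and orientation reversal of $L$". This is a faithfulness question and is known to be delicate: virtual $1$-knots are not determined by their group, peripheral structure and quandle (witness the Kishino knot, Fig.\ \ref{KI}), and for $m\ge 2$ even classical $2$-knots are not determined by their complements \cite{Gor}, so the invariant-theoretic route alone cannot close the gap. I would instead look for a direct geometric argument: given a virtual equivalence between $(F_1\times N\times[0,1],L_1\times N)$ and $(F_2\times N\times[0,1],L_2\times N)$, use the trivial fibration of each product complement over $N$ (with fibre a link complement) to split the equivalence, one elementary move ($\sim$ or isotopy) at a time, into an equivalence of the $N$-directions and an equivalence of the $L$-directions. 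The difficulty is that the intermediate pairs $(G\times[0,1],K)$ along such an equivalence are arbitrary codimension-$2$ configurations carrying no product-over-$N$ structure, so one must maintain a usable trace of the $N$-factor along the whole chain; this is the point at which I expect extra hypotheses to be needed — for instance $N$ aspherical, or $L$ admitting no orientation-reversing self-diffeomorphism. The mirror-image and orientation-reversal caveat in the statement is exactly the residual ambiguity in recovering an orientation of $F$ and of $L$ (equivalently, the meridian of $L$) from the oriented product $L\times N$.
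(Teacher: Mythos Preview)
The statement you are attempting to prove is labeled in the paper as a \emph{Conjecture}, not a theorem: the paper offers no proof, not even a sketch, and in fact the surrounding section (Section~\ref{spinning}) consists of exactly the definition of $\Phi_N$ followed by this conjecture and nothing else. So there is no ``paper's own proof'' to compare your proposal against; you are trying to settle an open problem.

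Your write-up is not a proof and you are aware of this: you correctly isolate the decisive gap in your final paragraph. The invariant computations in your second paragraph are essentially right --- $\Phi_N$ does preserve the link group, peripheral data, quandle, and (where defined) biquandle, for the reasons you give --- but, as you yourself note, these invariants are already known \emph{not} to separate virtual links even for $m=1$ (Kishino) and not to separate classical $2$-knots (Gordon, \cite{Gor}). So the invariant-theoretic half of your argument, while correct, cannot by itself yield the conjecture, and you say as much. The geometric ``splitting'' strategy you outline at the end is a reasonable heuristic, but the obstacle you identify --- that intermediate pairs along a chain of $\sim$-moves and isotopies need not carry any product-over-$N$ structure --- is exactly the reason this is stated as a conjecture rather than a theorem. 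Nothing in the paper suggests how to overcome it, and your proposal does not either.

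In short: your proposal is an honest and largely accurate assessment of what is known and where the difficulty lies, but it is not a proof, and the paper does not claim to have one.
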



\section{Nontrivial Virtual $2$-Knots With Infinite Cyclic Knot Group}

We will now give an example of a virtual $2$-knot which is nontrivial but has the same group and quandle as the trivial knot. Such examples are known for $1$-knots, and our constructions will be based on one of them. Let $KI$ denote the Kishino knot, shown in Fig. \ref{KI}. This is a virtual knot which is distinguished from the unknot by its biquandle, \cite{Kauff2, BF}. On the other hand, as a welded knot it is trivial, as it may be unknotted using the forbidden move. Since the knot quandle and group are invariants for welded knots, this shows that the quandle and group of the Kishino knot are isomorphic to the quandle and group of the unknot.
We now wish to construct different elements of $V_2/\cong$ with the same quandle, group, and biquandle as those of the Kishino knot.
We will use a generalization of the spinning construction for classical knots, \cite{CKS}. Let $(F\times [0,1], K)$ be a virtual $1$-knot. Then we will define $S_T(F\times [0,1], K)$ to be the pair $(F\times S^1\times [0,1], K\times S^1)$. $S_T(F\times [0,1], K)$ is clearly a virtually knotted torus. The map $S_T$ is a special case of the construction defined in Section \ref{spinning}.

\begin{theorem}
$S_T$ preserves the group, quandle, and biquandle, in the sense that the knot group, quandle, and biquandle of $S_T(K)$ will be isomorphic to the group, quandle, and biquandle of $K$.
\end{theorem}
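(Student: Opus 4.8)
The plan is to analyze the geometry of $S_T(F\times[0,1],K) = (F\times S^1\times[0,1], K\times S^1)$ and relate its invariants directly to those of $(F\times[0,1],K)$ by constructing explicit homotopy equivalences. First I would observe that the relevant ambient space for computing invariants of $S_T(F\times[0,1],K)$ is $h(F\times S^1)-(K\times S^1)$. The key geometric point is that $h(F\times S^1) = (F\times S^1\times[0,1])/r$, where $r$ collapses $F\times S^1\times\{1\}$ to a point. I claim there is a deformation retraction of $h(F\times S^1)-(K\times S^1)$ related to $h(F)-K$ crossed with $S^1$, but collapsed appropriately at the cone point. More precisely, the cone point of $h(F\times S^1)$ can be viewed as the image of $\{1\}$; away from it, a point is $(x,\theta,t)$ with $x\in F$, $\theta\in S^1$, $t\in[0,1)$, and it lies in the complement iff $(x,t)\notin$ (the image of $K$ under $\pi_I$-coordinates). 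So $h(F\times S^1)-(K\times S^1)$ fibers over $S^1$ away from the cone point with fiber $(F\times[0,1]-K)$, but the cone point identifies all fibers at $t=1$. This is precisely the mapping-torus-type construction whose total space is homotopy equivalent to $h(F)-K$: collapsing $F\times S^1\times\{1\}$ is the same as first collapsing $F\times\{1\}$ in each fiber (giving $h(F)-K$ in each slice) and then identifying the resulting cone points across all $\theta\in S^1$, which since they are all the same point yields exactly $h(F)-K$ with $S^1$'s worth of extra structure that retracts away.

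The cleanest approach is to exhibit an explicit homotopy equivalence $h(F\times S^1)-(K\times S^1)\simeq h(F)-K$. I would define a map by "projecting out" the $S^1$ coordinate: send $(x,\theta,t)\mapsto (x,t)$ for $t<1$, and send the cone point to the cone point; since all of $F\times S^1\times\{1\}$ is collapsed to a single point in $h(F\times S^1)$, this is well-defined and continuous, and it is a deformation retraction because one can push the $\theta$-coordinate to a basepoint $\theta_0$ as $t$ increases, using the cone structure — exactly analogous to the map $H$ in the proof of Theorem \ref{hinv} and the figure \ref{vpush}. This shows $\pi_1(h(F\times S^1)-(K\times S^1))\cong\pi_1(h(F)-K)$, giving the group statement, and moreover carries meridians to meridians and respects the peripheral structure (the boundary torus bundle $\partial N(K\times S^1) = \partial N(K)\times S^1$ maps to $\partial N(K)$). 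For the quandle, by the geometric definition (paths from basepoint to $\partial N$, modulo homotopy, with the meridian conjugation operation), this same homotopy equivalence induces a bijection on path-homotopy classes that commutes with the meridian operation, hence is a quandle isomorphism $Q(h(F\times S^1),K\times S^1)\cong Q(h(F),K)$.

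For the biquandle, I would argue diagrammatically using the computation via link diagrams. Starting from a link diagram $d$ of $K$ on $F$, the product $d\times S^1$ gives a broken-surface diagram of $K\times S^1$ on $F\times S^1$: each semi-arc of $d$ becomes an annular sheet, and each classical crossing of $d$ becomes a double-point curve ($\cong S^1$) with exactly the same over/under data, no triple points and no branch points arising. Hence the generators of $B(F\times S^1\times[0,1], K\times S^1)$ are in bijection with the semi-arcs of $d$ (i.e. the generators for the $1$-dimensional biquandle of $K$), and the relations $c=b_a$, $d=a^b$ at each double-point circle are precisely the biquandle relations one reads off at the corresponding crossing of $d$. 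So the two biquandle presentations literally coincide. The main obstacle I expect is the careful verification of the deformation-retraction claim near the cone point — making sure the collapse of $F\times S^1\times\{1\}$ interacts correctly with the deleted set $K\times S^1$ and that the retraction stays within the complement (this is where one uses that $K\subset\mathrm{int}(F\times[0,1])$, so near $t=1$ there is nothing deleted) — together with checking that the induced map on the boundary tubular neighborhoods is compatible, which is needed to transfer the peripheral structure and hence the full quandle (as opposed to just the group). Everything else is routine once the right homotopy equivalence is in hand.
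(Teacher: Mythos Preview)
Your biquandle argument is correct and is exactly what the paper does: crossing $d$ with $S^1$ gives a broken surface diagram with the same sheets and one double-point circle per crossing, yielding the identical presentation. In fact the paper uses this same diagrammatic argument for all three invariants at once, since $G(d)$ and $Q(d)$ are also computed from faces and double-point components (Theorem~\ref{KW} and the discussion in Section~\ref{JR}). Your diagrammatic paragraph already proves the whole theorem.

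Your geometric argument for the group and quandle, however, contains a genuine error: the claimed homotopy equivalence $h(F\times S^1)-(K\times S^1)\simeq h(F)-K$ is false. Take $F=D^2$ and $K$ an unknotted circle. Both $h(F)$ and $h(F\times S^1)$ are contractible cones, so the long exact sequence of the pair gives $H_{k}(\text{complement})\cong H_{k+1}(\text{pair})\cong H_{k-1}(K)$ in the first case and $H_{k-1}(K\times S^1)$ in the second (Thom isomorphism for the normal disk bundle). Hence $H_2(h(F)-K)\cong H_1(S^1)=\Z$ while $H_2\bigl(h(F\times S^1)-(K\times S^1)\bigr)\cong H_1(T^2)=\Z^2$, so no homotopy equivalence exists. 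Concretely, your proposed deformation ``push $\theta$ to $\theta_0$ using the cone structure'' must move the $t$-coordinate toward $1$, and the straight-line path $\{x\}\times\{\theta\}\times[t,1]$ can meet $K\times S^1$ whenever some point of $K$ sits above $(x,t)$; for an unknot at height $t=\tfrac12$ this happens for every $(x,\theta,t)$ with $x\in\pi(K)$ and $t<\tfrac12$. What \emph{is} true is that the projection $p$ induces an isomorphism on $\pi_1$ (a van Kampen argument: collapsing $A\times S^1$ kills both the $S^1$ factor and $\mathrm{im}\,\pi_1(A)$, leaving $\pi_1(Y)/\langle\!\langle\mathrm{im}\,\pi_1(A)\rangle\!\rangle$), but you asserted much more than that, and your quandle argument as written relies on the stronger false claim. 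The clean fix is simply to run your biquandle paragraph for the group and quandle presentations as well, which is exactly the paper's proof.
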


\bpr All three of these invariants can be computed from the faces and sheets together with the double point sets of a diagram of a knot. The construction yields the same presentations for a $1$-knot $K$ as for $S_T(K)$, since this has the same faces and sheets and each crossing in the diagram of $K$ yields a single double point curve at which the same faces and sheets meet with the same resulting relations. \epr

\begin{corollary}
$S_T(KI)$ is a virtual $2$-knot of a torus with the knot group and knot quandle of the unknot, but nontrivial biquandle.\label{2Kish}
\end{corollary}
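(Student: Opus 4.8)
The plan is to assemble Corollary \ref{2Kish} directly from the two results that immediately precede it, namely the theorem that $S_T$ preserves the group, quandle, and biquandle, together with the facts recalled about the Kishino knot $KI$ at the start of this section. First I would invoke the theorem on $S_T$ to conclude that the knot group, quandle, and biquandle of $S_T(KI)$ are isomorphic to the corresponding invariants of $KI$. Since $KI$ is welded-trivial (it may be unknotted via the forbidden move), and since the group and quandle are welded invariants, the group and quandle of $KI$ coincide with those of the unknot; hence the group and quandle of $S_T(KI)$ are those of the unknot (i.e., infinite cyclic group and the trivial quandle). On the other hand, the biquandle of $KI$ is nontrivial --- this is precisely what distinguishes $KI$ from the unknot in \cite{Kauff2, BF} --- so by the $S_T$-theorem the biquandle of $S_T(KI)$ is nontrivial as well, and in particular $S_T(KI)$ is a nontrivial virtual $2$-knot.

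The one point that needs a word of care is the claim that $S_T(KI)$ is the virtual knot ``of a torus,'' i.e.\ that the underlying $2$-manifold is $T^2$. This is immediate from the definition $S_T(F\times[0,1],K)=(F\times S^1\times[0,1],K\times S^1)$: since $K\cong S^1$, the embedded submanifold is $K\times S^1\cong T^2$. It also follows from the earlier remark that virtual equivalence preserves the diffeomorphism type of $L$, so the statement ``virtual $2$-knot of a torus'' is well posed. I would phrase the proof essentially as: apply the $S_T$-theorem; observe that the welded-triviality of $KI$ forces its group and quandle (hence those of $S_T(KI)$) to be those of the unknot; observe that the biquandle of $S_T(KI)$ is that of $KI$, which is nontrivial. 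No genuine obstacle arises here --- the corollary is a bookkeeping consequence of the preceding theorem and of the known properties of $KI$.

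\bpr
By the previous theorem, the knot group, knot quandle, and biquandle of $S_T(KI)$ are isomorphic to those of the Kishino knot $KI$. Since $KI$ is trivial as a welded knot (it can be unknotted using the forbidden move) and since the knot group and knot quandle are invariants of welded knots, the group and quandle of $KI$ --- and hence of $S_T(KI)$ --- agree with those of the unknot; in particular the knot group of $S_T(KI)$ is infinite cyclic. On the other hand, the biquandle of $KI$ is not isomorphic to the biquandle of the unknot, as shown in \cite{Kauff2, BF}, so the biquandle of $S_T(KI)$ is nontrivial and $S_T(KI)$ is a nontrivial virtual $2$-knot. Finally, by definition $S_T(KI)=(F\times S^1\times[0,1], K\times S^1)$ with $K\cong S^1$, so the underlying surface $K\times S^1$ is a torus.
\epr
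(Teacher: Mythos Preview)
Your proposal is correct and matches the paper's approach: the paper states this as a corollary with no separate proof, relying on exactly the ingredients you cite --- the preceding theorem that $S_T$ preserves group, quandle, and biquandle, together with the facts about $KI$ (welded-trivial, hence trivial group and quandle; nontrivial biquandle by \cite{Kauff2, BF}) recalled at the start of the section. Your added remark that $K\times S^1\cong T^2$ makes explicit the ``of a torus'' claim, which the paper leaves implicit in the definition of $S_T$.
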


We note that either $S_T(KI)$ provides an example of a nontrivial classical knotting of a torus with infinite cyclic knot group, or else it is a virtually knotted torus which is not realizable.

\begin{conjecture}
$S_T(KI)$ is not realizable.
\end{conjecture}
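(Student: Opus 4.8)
The plan is to use the biquandle as a separating invariant and argue by contradiction. First, recall that by Corollary~\ref{2Kish}, combined with the theorem that $S_T$ preserves the group, quandle, and biquandle and with the invariance of these objects under virtual equivalence, $S_T(KI)$ has infinite cyclic knot group, trivial knot quandle, but a biquandle $B(S_T(KI))\cong B(KI)$ which is \emph{not} trivial. Suppose, toward a contradiction, that $S_T(KI)$ is realizable, say $S_T(KI)\cong(S^3\times[0,1],T)$ with $T$ an embedded torus, equivalently $T\subset S^4$. Then $\pi_1(S^4-T)=\Z$, the fundamental quandle $Q(S^4,T)$ is trivial (indeed $\pi_1=\Z$ already forces this: the meridian then generates $\pi_1$, so the peripheral subgroup is all of $\pi_1$), while $B(S^4,T)\cong B(KI)$ is nontrivial. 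Thus it suffices to rule out a classical torus in $S^4$ with trivial fundamental quandle but nontrivial fundamental biquandle.

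The key step is a purely classical lemma: for a classical $n$-link $L$ in $S^{n+2}$, Carrell's fundamental biquandle $B(S^{n+2},L)$ is isomorphic to the biquandle obtained from the fundamental quandle $Q(S^{n+2},L)$ by setting all sideways operations equal to the identity. The reason to expect this is that in a broken surface diagram of an honest embedding the over-sheet at each double-point curve passes through unbroken; grouping the sheets of the diagram into faces---reconnecting across over-crossings, which is exactly the face decomposition of Section~\ref{JR}---and relabelling each face by a single generator converts Carrell's relation $c=b_a$ into $b_a=b$ and leaves only the quandle relation $d=a^b$. Granting this lemma, the trivial quandle $Q(S^4,T)$ forces $B(S^4,T)$ to be trivial, contradicting $B(S^4,T)\cong B(KI)$; hence $S_T(KI)$ is not realizable.

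The hard part will be the reduction lemma itself: one must verify that the proposed face-relabelling is globally consistent with Carrell's relations not just along smooth double-point curves but also at triple points and, most delicately, at the branch points of the broken surface diagram, where the local model requires separate attention. Should the reduction fail in general---that is, should some classical surface in $S^4$ have trivial quandle but nontrivial biquandle---this route collapses, and one would instead seek a finer invariant: a natural candidate is the $\Z[t,t^{-1}]$-module $H_2$ of the infinite cyclic cover of $h(F\times S^1)-K\times S^1$ together with its Blanchfield-type pairing, computed directly from the product structure defining $S_T(KI)$ and played off against the duality constraints satisfied by every classical torus in $S^4$ with infinite cyclic complement group, in the spirit of Boyle's obstruction used above to show that $Spt_1(T)$ is not ribbon.
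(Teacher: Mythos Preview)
The paper offers no proof of this statement: it is posed as a conjecture, and the sentence immediately preceding it explicitly leaves open the alternative that $S_T(KI)$ \emph{is} realizable (in which case it would be a nontrivial classical torus in $S^4$ with infinite cyclic knot group). There is thus no ``paper's own proof'' to compare against.

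Your proposal rests entirely on the ``key lemma'' that for a classical link $L\subset S^{n+2}$ the fundamental biquandle $B(L)$ is isomorphic to $Q(L)$ made into a biquandle with trivial lower operations. As stated this is false already for the unknot $U$: the quandle $Q(U)$ has a single element, so $Q(U)$-as-biquandle is the one-point biquandle, whereas $B(U)$ is the free (strong) biquandle on one generator. The latter is not a single element---for instance it surjects onto the sub-biquandle of any Alexander biquandle generated by one element, where $a_a=sa\neq a$ whenever $s\neq 1$. Your heuristic of identifying the over-sheets belonging to a common face constructs only the natural \emph{surjection} $B(L)\twoheadrightarrow Q(L)$-as-biquandle; it does not produce an inverse, and this surjection exists equally for virtual links, so it cannot by itself separate the classical case from the virtual one.

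What your argument actually needs is the weaker claim that a classical torus in $S^4$ with trivial fundamental quandle must have fundamental biquandle isomorphic to that of the unknotted torus. This is not known, and establishing it would amount to proving the conjecture itself; you have relocated the difficulty rather than resolved it, as you partly acknowledge. The fallback via $H_2$ of the infinite cyclic cover and Blanchfield-type pairings is a reasonable direction to explore, but as written it is only a suggestion, with no computation carried out for $S_T(KI)$ and no specific obstruction identified.
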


\section{Gauss Codes for Higher Dimensions}
\label{s-gauss-higher-dim}
%

For a classical link, a Gauss code consists of a space $X$ of disjoint copies of $S^{1}$, together with ordered triples $(x,y,z)$, where $x,y$ are a pair of points in $X$ and $z$ is a $+$ or $-$ sign. Any link diagram with $n$ components gives rise to a Gauss code by letting $X$ be the space consisting of $n$ copies of the circle, and introducing a triple $(x,y,z)$ for each crossing in the diagram, where $x$ is the point of the overcrossing, $y$ is the point of the undercrossing, and $z$ is the sign of the crossing. These may be shown on a diagram by drawing an arrow from $x$ to $y$ and labeling the head of the arrow with the sign of the crossing. A Reidemeister move on the diagram changes the Gauss code in a local way, by introducing an isolated triple (1st Reidemeister move), introducing two parallel triples of opposite sign (2nd Reidemeister move), or rearranging three triples (3rd Reidemeister move).

By allowing arbitrary Gauss diagrams modulo arbitrary changes of the form determined by the Reidemeister moves, we obtain the virtual links of Kauffman, \cite{Kauff}.

\begin{figure}
		\centering
			\includegraphics[scale=0.3]{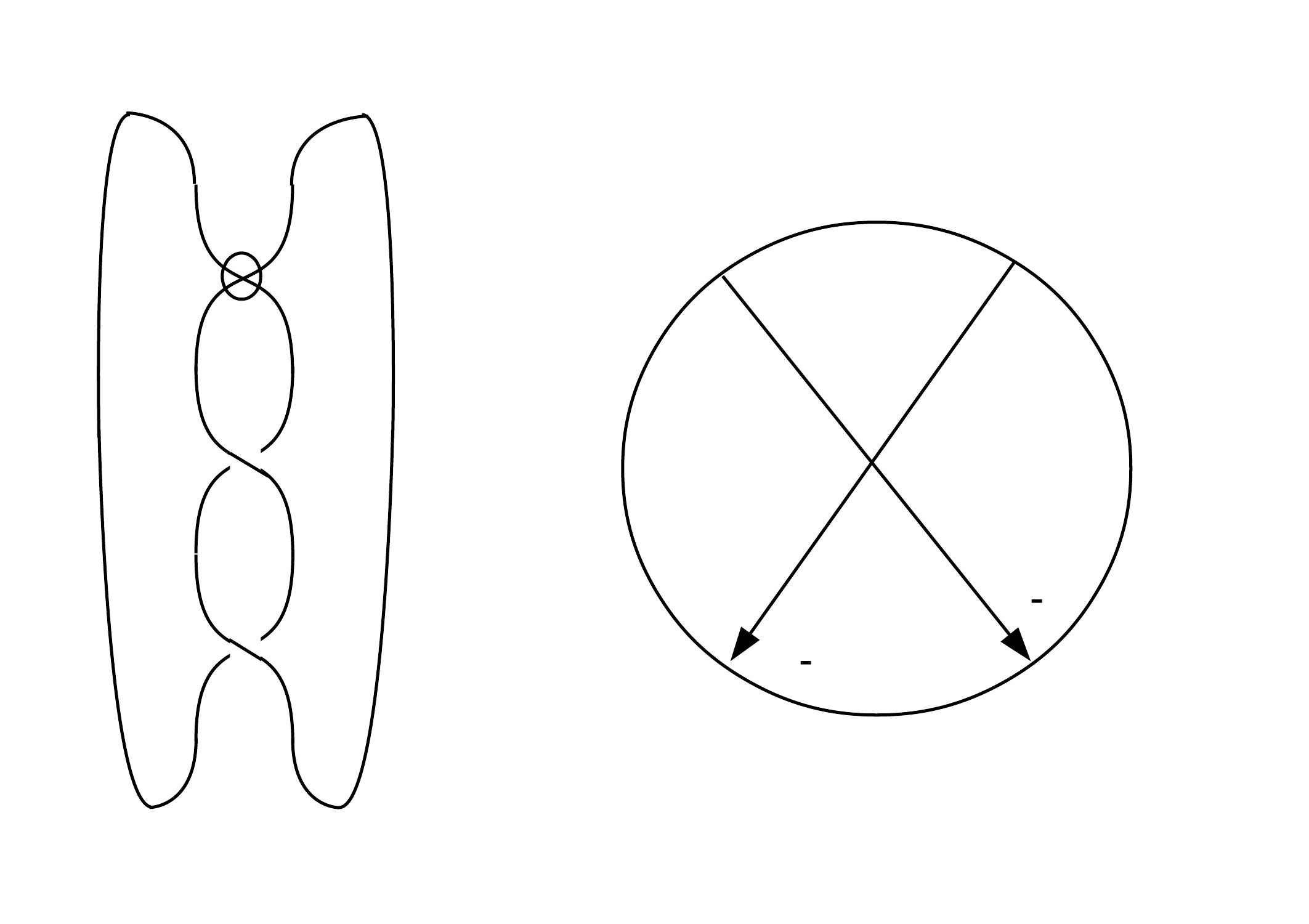}
		\caption{A virtual knot diagram and its Gauss code.}
		\label{gaussex}
\end{figure}

We will now generalize this to $2$-links. Let $L$ be a $2$-link of $X$ in $F\times [0,1]$, in general position with respect to the canonical projection $\pi:F\times [0,1]\rightarrow F$. Then by labeling $\pi(L)$, we can produce a broken surface diagram on $F$. Corresponding to each labeled double point curve in the broken surface diagram, there will be two curves on $X$, and we may pair up these curves, marking which is the over-crossing and which is the under-crossing. We must also mark in which direction the normal vectors of the surfaces point, as indicated in Fig. \ref{gausssurface}. There will also be cusp points and triple points. The triple points will lift to places on $X$ where double point curves cross one another, and if two double point curves cross there must be a third double point curve which shares that triple point. A surface $X$ together with curves marked in this manner gives us an object analogous to a Gauss code for a classical link. Changing the broken surface diagram by a Roseman move will cause a local change to the marked curves on $X$.

Locally, any double point curve may be indicated by noting which sheets meet at the double point curve, which is the over-crossing and which is the under-crossing, and marking in which direction the normal vectors to each sheet are pointing.

\begin{figure}
		\centering
			\includegraphics[scale=0.5]{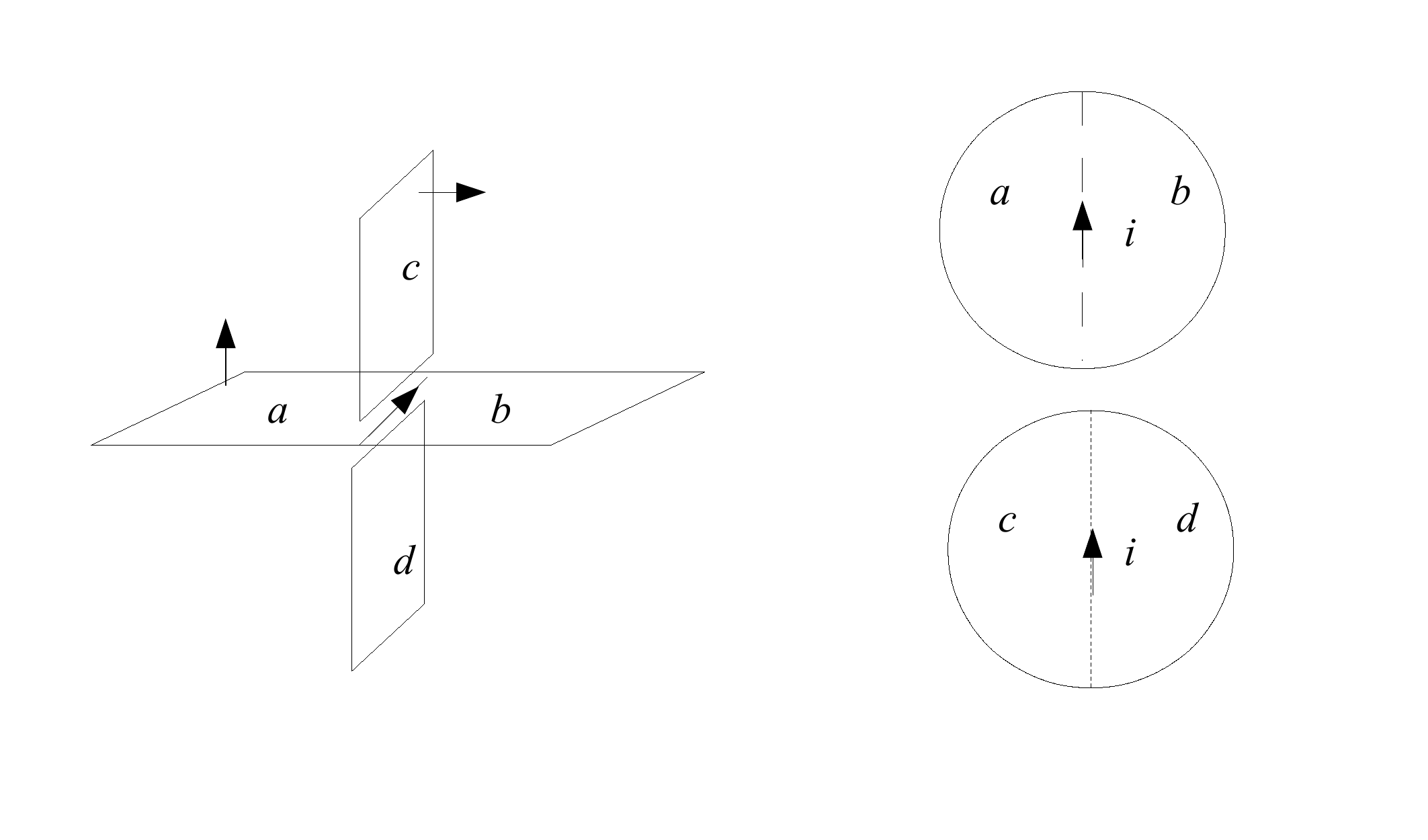}
		\caption{On the surface $X$ we mark the curves where the double point curve lifts. One curve comes from the over-crossing part and one from the under, and they are labeled as coming from the same double point curve with the letter \emph{i}. The overcrossing is marked with the larger dashed curve. Arrows are placed on each curve to determine which direction to pair them together, with the convention that the triple consisting of the normal vector to the overcrossing sheets, normal vector to the undercrossing sheets, and the arrow on the curves, should form a positive basis.}
		\label{gausssurface}
\end{figure}

Suppose now that instead of starting with a broken surface diagram of a knotted surface in four dimensions and lifting the double point curves, we instead consider simply a surface $X$ together with a collection of curves which \emph{locally} could be the lift of the double point curves of a broken surface diagram of a knotted surface. That is, we consider $X$ with a collection of curves which are paired together, with one curve in each pair marked as an \emph{overcrossing}, with the rule that curves may only terminate in cusps and that if two double point curves cross, there is a third curve which shares the triple point. The surface $X$ together with these marked curves will be termed a \emph{(virtual surface) Gauss code}. Although \emph{locally} such a diagram may be obtained as the lift of a neighborhood of a broken surface diagram on a disk, it may not be possible to obtain it in this manner globally.

\begin{definition}
A virtual surface Gauss code is a closed surface $X$ with a set $D$ of marked curves obeying the following conditions.

1. Each curve is closed or terminates in a cusp.

2. Each curve in $D$ is paired with a unique second curve in $D$, and one curve in the pair is marked as \emph{over}. Arrows are placed on each curve in the pair. The normal vectors of the two surfaces are also marked as in Fig. \ref{gausssurface}.

3. Two curves that terminate in a single cusp are paired together, with arrows both pointing toward or both pointing away from the cusp.

4. If two curves cross, then the curves they are paired with cross as well (that is, triple points appear three times on $X$).
\end{definition}

This may be succinctly summarized by stating that $X$ is marked with curves in the set $D$ such that \emph{locally} the set $D$ is the lift of some neighborhood of a broken surface diagram on a disk. Alternatively, $X$ is the surface Gauss code for a broken surface diagram on an arbitrary $3$-manifold.

For broken surface diagrams, there exists a set of seven moves, the Roseman moves, which suffice to generate any isotopy of the surface. Each move creates a localized change to the Gauss code. Our \emph{virtual Gauss code surface link} may thus be taken modulo such moves, where we allow such moves to be performed regardless of whether they would be ``realizable'' in the broken surface diagram. Such changes to a Gauss code will be termed \emph{Gauss-Roseman moves}.

Note that these Gauss-Roseman moves are in fact all changes to the Gauss code which take a neighborhood that is locally the lift of some broken surface diagram for a knotted surface and change that neighborhood by a Roseman move.

%

\section{Broken Surface Diagrams for Virtual Surface Links}
\label{s-broken-surf}
%

At this point we will restrict our consideration to $V_{2}$.
Given a $2$-dimensional surface embedded in $F^{3}\times [0,1]$, it is possible to develop a diagrammatic expression for this embedding analogous to a classical link diagram by labeling a projection of the surface onto $F$, in the manner described by Roseman, \cite{Roseman1, CKS}. Such diagrams are called \emph{broken surface diagrams}, which allow the study of surfaces in four dimensions to be studied via combinatorial methods. We will review the basics of such diagrams for classical $2$-links, and then discuss how such diagrams may be generalized to incorporate virtual $2$-links.

\begin{remark}
J. Schneider has also considered generalizations of broken surface diagrams allowing virtual crossings, \cite{Schneider}. His use of virtual broken surface diagrams is as a combinatorial generalization of broken surface diagrams. This reference also considers an equivalence relation consisting only of virtualized versions of the Roseman moves, and does not appear to allow the \emph{graph changes} which we also permit. Therefore, this appears to be a distinct approach to generalizing $2$-links to virtual $2$-links. There does not appear to be an a priori reason to expect these two approaches to be equivalent.
\end{remark}

In order to form a broken surface diagram for a $2$-link $L$ in $F^{3}\times [0,1]$, consider the canonical projection coming from the Cartesian product $\pi:F^{3}\times [0,1]\rightarrow F^{3}$. By performing an arbitrarily small isotopy, we may place $L$ in general position with respect to $\pi$.

\begin{theorem}\cite{Roseman1}
Given a $2$-link $L$ in general position in $F^3\times [0,1]$, the projection $\pi(L)$ will be embedded except at finitely many umbrella points, finitely many triple points, and double point curves, by. The double point curves are either closed or terminate in umbrella points.
\end{theorem}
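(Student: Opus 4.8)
The plan is to analyze the smooth map $g=\pi|_{L}\colon L\to F^{3}$ of the compact surface $L$ into the $3$-manifold $F^{3}$, reading off the local and global structure of the image from the general-position data already at hand. First I would invoke Theorem \ref{Rthm} to isotope $L$ into general position with respect to $\pi$. In the case $n=2$, the general-position conditions say precisely that the branch set $B$ is a closed $0$-dimensional submanifold of $L$, that the double point set $D$ is a compact $1$-dimensional submanifold immersed in $L$ with normal (hence transverse) self-crossings whose locus is the set $N$, and that each branch point $b\in B$ has a disk neighborhood $U\subseteq D$ with $U-B=U_{0}\sqcup U_{1}$ and $\pi(U_{0})=\pi(U_{1})$. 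Since $L$ is compact, $B$ is a finite set of points; these are the umbrella points. Since $D$ is a compact $1$-manifold immersed in a surface with transverse self-crossings, $N$ is finite; its image $\pi(N)\subset F$ is exactly the set of triple points (each triple point has three preimages in $L$, one on each of the three local sheets, and each such preimage lies on two local branches of $D$, i.e.\ is a point of $N$), so there are finitely many triple points.

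Next I would localize the failures of $g$ to be an embedding. By the definitions of $D^{*}$ and $D=\pi^{-1}(D^{*})$, the map $g$ is injective on $L-D$, and by the definition of $B$ it is an immersion on $L-B$; since $B\subseteq D$, on the open set $L-D$ the map $g$ is an injective immersion of an open surface, hence an embedding onto $\pi(L)-D^{*}$. Thus the only points of $\pi(L)$ near which $g$ is not already an embedding lie in $D^{*}=\pi(D)$, which is a union of immersed curves (the double point curves, over which $g$ is generically two-to-one), together with the finitely many triple points $\pi(N)$ (three-to-one) and the finitely many umbrella points $\pi(B)$, at which $g$ fails to be an immersion and carries the Whitney-umbrella local model supplied by general position.

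It remains to describe the double point curves themselves. Since $D$ is a compact $1$-manifold, it is a disjoint union of circles, so $\pi(D)$ is a union of immersed closed curves and arcs; a circle component of $D$ disjoint from $B$ projects to a closed double point curve. For a component of $D$ meeting $B$, I would use the local condition above: near $b\in B$ the two arcs $U_{0},U_{1}\subset D$ are folded by $\pi$ onto a single arc of $D^{*}$ whose endpoint is $\pi(b)$, so in the image the double point curve terminates at the umbrella point $\pi(b)$. Hence every double point curve of $\pi(L)$ is either closed or an arc whose endpoints are umbrella points, as claimed.

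The genuinely hard analytic content — that an arbitrarily small isotopy achieves general position, including the Whitney-umbrella normal form at branch points and the transversality making $B$, $N$, and the triple-point set $0$-dimensional — is exactly Theorem \ref{Rthm}, which we may assume; so the remaining work is bookkeeping. The one step requiring real care is the last: verifying that at a branch point the image double curve actually terminates rather than passing through, which is where one must invoke the explicit description of the neighborhood $U\subseteq D$ of $b$ in the general-position definition, not merely the dimension count $\dim B=0$.
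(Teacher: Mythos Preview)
The paper does not supply its own proof of this statement; it is quoted from Roseman \cite{Roseman1} and used as input for the subsequent discussion of broken surface diagrams. Your argument is exactly the natural way to recover it from the material already in the paper: specialize the general-position axioms to $n=2$ and read off that $B$ is a finite set (the umbrella points), that $N$ is finite (yielding finitely many triple points), and that the folding condition in axiom (3) of the general-position definition forces the image curves in $D^{*}$ to terminate precisely at the images of branch points. This is correct, and as you note, all the analytic content has been absorbed into Theorem~\ref{Rthm}.

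Two small points. First, the hypothesis already assumes $L$ is in general position, so the opening appeal to Theorem~\ref{Rthm} is superfluous. Second, your sentence ``a circle component of $D$ disjoint from $B$ projects to a closed double point curve'' slightly obscures the fact that $\pi|_{D}$ is generically two-to-one over $D^{*}$ (one preimage on each sheet), so a single component of $D$ need not map bijectively onto a double point curve in $F$. It is cleaner to argue directly in the image: away from triple points and umbrella points, $D^{*}$ is locally the transverse intersection of two embedded sheets in $F^{3}$, hence a smooth $1$-manifold; the folding description near $B$ then shows that the only boundary points of these curves are the umbrella points, and otherwise they close up. This avoids having to track how circles in $D$ pair off under $\pi$.
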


\begin{definition}
A \emph{broken surface diagram} on $F^{3}$ is a surface in $F^{3}$ which is embedded except at finitely many umbrella points, finitely many triple points, and a set of double point curves, where each double point curve is marked with over/under information.
\end{definition}

These broken surface diagrams are analogous to classical link diagrams on surfaces. They are also the same as the diagrams defined by Roseman, \cite{Roseman2}, in dimension $4$.

The usefulness of broken surface diagrams is not only in allowing us to express surfaces in four dimensions through diagrams in three dimensions, but also in the existence of a complete set of moves on these diagrams called the \emph{Roseman moves}, which are analogous to the Reidemeister moves for classical link diagrams. These moves are shown in Fig. \ref{Rose}. Their completeness is proved in \cite{Roseman}, with a more general proof applicable to any dimension found in \cite{Roseman2}.

\begin{figure}
		\centering
			\includegraphics[scale=0.5]{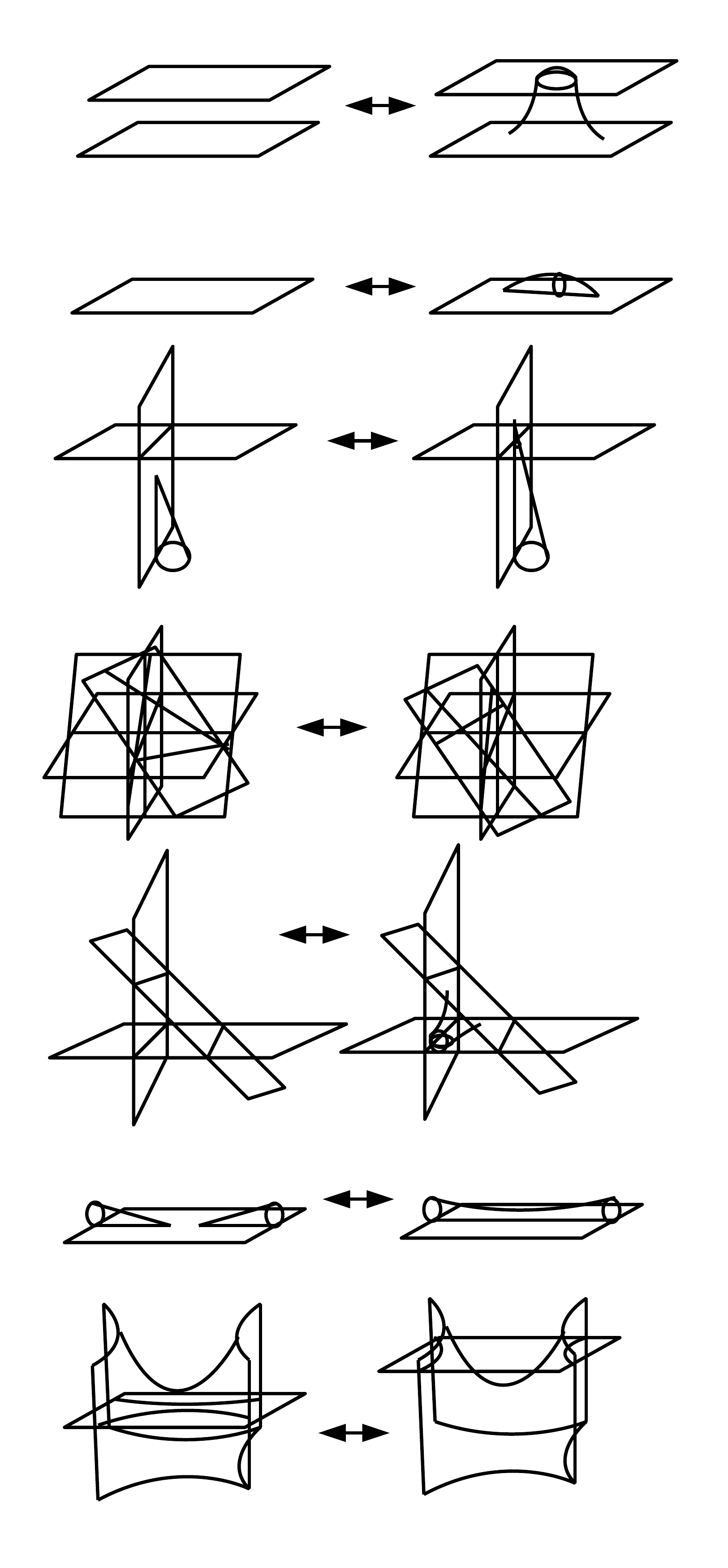}
		\caption{The seven Roseman moves for broken surface diagrams. Over/under and virtual markings on the double point curves must be consistent before and after the move.}
		\label{Rose}
\end{figure}

Now, for $V_{1}$, it is possible to extend classical link diagrams on $S^{2}$ to allow us to describe arbitrary virtual $1$-links. Given a pair $(F^{2}\times [0,1], L)$, we may represent $L$ by a link diagram on $F$. For each crossing in $\pi(L)$, we put a crossing of the same type in a diagram on $S^2$. Then we join together the ends of the segments around each crossing so that we get the same set of arcs as we have in $\pi(L)$, allowing the curves joining the crossings to pass through one another only in virtual crossings. This algorithm involves many choices: we must choose the position for each of the crossings in the diagram, and we can choose different arcs to connect them. However, it is shown in \cite{KamaKama} that the results will always be equivalent under virtual Reidemeister moves. In brief, the proof requires two observations: first, given two choices for locating the crossings on $S^2$, we may isotope these crossings so that their locations agree. This is easy to do since each crossing can be thought of as lying inside a small disk on $S^2$, and we need only slide and rotate the disks so that they agree. On the other hand, if the crossings are located in the same places, then any set of arcs joining the crossings are related by some sequence of virtual replacements. It is furthermore shown that two virtual link diagrams which represent equivalent elements of $V_1$ are related by a finite sequence of local Reidemeister moves together with allowing a semi-arc with only virtual crossings to be replaced by another semi-arc with the same endpoints and only virtual crossings.

We may generalize broken surface diagrams on $S^{3}$ by allowing for double point curves which are marked as virtual instead of being marked with over/undercrossing information. Such diagrams are \emph{virtual broken surface diagrams}, which we may abbreviate for convenience as \emph{VBSDs}. We consider these diagrams modulo the following moves. Given a broken surface diagram $K$, let $\Sigma$ be a subsurface (with boundary) of $K$ such that all the double point curves on $\Sigma$ are virtual crossings or meet $\Sigma$ transversely. Then we may replace $\Sigma$ by another surface $\Sigma'$ with the same genus and boundary as $\Sigma$ and replace $K$ by $K'=(K-\Sigma)\cup \Sigma'$, where all double point curves which meet $F'$ are marked as virtual. Such an adjustment will be called a \emph{virtual replacement}. In addition we will allow a local neighborhood of the diagram involving only classical crossings (that is, crossings marked with over/under information) to be changed by a classical Roseman move.

Instead of considering virtual replacements, one may instead wish to find a local set of moves on virtual crossings which results in the same set of equivalences between diagrams. To do so, we will define the \emph{virtual Roseman moves} to be any local modification of the diagram in which we perform a classical Roseman move, except that one of the disks involved in the classical Roseman move has only virtual crossings with the other disks, both before and after the move is performed.

\begin{theorem}
Two virtual broken surface diagrams are equivalent modulo Roseman moves and virtual replacements iff they are equivalent modulo the virtual and classical Roseman moves.
\end{theorem}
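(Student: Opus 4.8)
The plan is to show that the two equivalence relations on VBSDs have interchangeable generating sets. Write $\mathcal{R}$ for the equivalence relation generated by classical Roseman moves (performed in a ball meeting the diagram only in classical crossings) together with virtual replacements, and $\mathcal{R}'$ for the one generated by classical Roseman moves together with virtual Roseman moves. Since classical Roseman moves belong to both generating sets, it suffices to prove (a) every virtual Roseman move is an $\mathcal{R}$-equivalence, and (b) every virtual replacement is an $\mathcal{R}'$-equivalence.

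Part (a) is routine. A virtual Roseman move is, by definition, a classical Roseman move performed in a ball in which one of the disks participating in the move has only virtual crossings with the other disks, both before and after. To realize it within $\mathcal{R}$, first apply a virtual replacement to slide that virtual disk out of the ball (legitimate, since all of its crossings are virtual), then perform the now all-classical Roseman move on the remaining disks, and finally apply another virtual replacement to reinsert the virtual disk in the position it occupies after the move. Hence every virtual Roseman move lies in $\mathcal{R}$.

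Part (b) is the substance, and is the two-dimensional analogue of the fact that the virtual ``detour move'' for classical link diagrams is generated by the virtual Reidemeister moves. Fix a VBSD $K$ in $S^{3}$ and a virtual replacement that replaces a subsurface $\Sigma$ by $\Sigma'$ with $\partial\Sigma=\partial\Sigma'=:\gamma$, of the same genus, keeping $K_{0}:=\overline{K\setminus\Sigma}$ fixed; all double point curves incident to $\Sigma$ or $\Sigma'$ are marked virtual. Since $\Sigma$ and $\Sigma'$ are embedded surfaces in $S^{3}$ agreeing on $\gamma$ and abstractly diffeomorphic rel boundary, and since $S^{3}$ is $2$-connected, they are homotopic rel $\gamma$ (the successive obstructions lie in cohomology with coefficients in $\pi_{1}(S^{3})=0$ and $\pi_{2}(S^{3})=0$). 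Choose such a homotopy $\{\Sigma_{t}\}_{t\in[0,1]}$ from $\Sigma$ to $\Sigma'$ and perturb it, keeping $K_{0}$ fixed, to be generic as a one-parameter family of maps $\Sigma\to S^{3}$. Then for all but finitely many $t$ the image $\Sigma_{t}\cup K_{0}$ is a broken surface diagram, and at the exceptional parameters it undergoes one of the standard codimension-one transitions -- birth or death or a saddle of a double curve, a double curve passing through a triple point, a tangency of two double curves, creation or annihilation of a branch point, and so on -- which are exactly the transitions governed by the seven Roseman moves. Each such transition is local and involves at least one sheet of $\Sigma_{t}$; since every sheet of $\Sigma_{t}$ meets the rest of the diagram, and meets itself, only in virtual double curves, that sheet plays the role of a disk with only virtual crossings, so the transition is a virtual Roseman move, all curves touching $\Sigma_{t}$ being kept marked virtual throughout. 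Letting $t$ run from $0$ to $1$ thus converts $K_{0}\cup\Sigma$ into $K_{0}\cup\Sigma'$ by a finite sequence of virtual Roseman moves, which establishes (b) and hence $\mathcal{R}=\mathcal{R}'$.

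The main obstacle I anticipate is making the genericity step in (b) rigorous: one must verify that a generic homotopy of an embedded surface in $S^{3}$, with a fixed complementary complex, really does decompose into the finite list of local transitions underlying the Roseman moves -- the surface analogue of putting a homotopy of arcs into general position -- and that each such transition, once one observes that a piece of $\Sigma_{t}$ always supplies a ``purely virtual'' disk, coincides on the nose with a virtual Roseman move. The delicate cases are those involving branch or umbrella points and triple points in which two of the three sheets come from $\Sigma_{t}$, or in which a sheet of $\Sigma_{t}$ sweeps across a fixed (possibly classical) double curve of $K_{0}$.
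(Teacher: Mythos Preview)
Your proof is correct and follows essentially the same approach as the paper's. For direction (b) both arguments run a homotopy of the replaced subsurface, put it in general position, and invoke Roseman's classification of the non-generic instants to exhibit the replacement as a sequence of virtual Roseman moves; you are simply more explicit than the paper about why such a homotopy rel boundary exists (using $\pi_1(S^3)=\pi_2(S^3)=0$) and about which local transitions deserve extra care. For direction (a) the paper calls it ``immediate'' without further comment, while you spell out the slide-out/move/slide-back mechanism --- a harmless elaboration.
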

\bpr It is immediate that any two diagrams which are equivalent under the virtual and classical Roseman moves must be equivalent under the classical Roseman moves together with virtual replacements. It remains to show that any virtual replacement can be accomplished using the virtual Roseman moves. The two diagrams differing by a virtual replacement are related by a homotopy which slides the disk that undergoes virtual replacement from its old position to its new position. This homotopy will leave the immersion of the broken surface generic except at a finite collection of times, by the same argument which shows that a generic isotopy of a surface in $\mathbb{R}^4$, projected into three dimensions, leaves a generic immersion in $\mathbb{R}^3$ except at a finite collection of times, \cite{Roseman, Roseman2}. By Roseman's argument for the completeness of the Roseman moves, we may assume that such non-generic immersions are of the sort encountered in the Roseman moves. The only difference will be that in our case, one of the disks involved will have only virtual crossings instead of marked crossings. \epr

Now we wish to establish a relationship between our $V_2/\cong$ and these virtual broken surface diagrams. Let us consider the approach taken for $1$-links by Kamada and Kamada \cite{KamaKama} as summarized above. The first step is to show that there is a way to represent virtual $2$-links by using VBSDs, after which we may consider whether this map respects virtual equivalence. Therefore, let $(F\times [0,1], K)$ be a virtual $2$-link. Consider the broken surface diagram $d$ on $F$. This will have a collection of double point curves which either form closed circles or line segments that terminate in umbrella points, \cite{Roseman, Roseman2, CKS}. We wish to form a VBSD on $S^3$ with the same collection of double point curves and umbrella points. Placing a double point curve in $S^3$ amount to placing a framed link and some framed curves with ends; the framing determines how the sheets and faces intersect at the double point curve. To see this it suffices to consider that, for a point on a double point curve that is not an umbrella point: in a ball neighborhood of that point, two surfaces intersect one another. Triple points correspond to places where two of these framed curves cross one another transversely. Once we have established where the double point curves lie, we need only place surfaces connecting the appropriate double point curves together. The choices involved in placing the surfaces are all related by virtual replacements.

At this point we encounter an ambiguity in our choices which cannot be resolved by an isotopy in $S^3$ or a virtual replacement. The problem, which we do not encounter in the case of $1$-links, is that we have a choice in how to place the double point curves in $S^3$. The set of double point curves really defines a framed graph $\Gamma \subset S^3$, and there are many choices of framings and of isotopy classes of the graph. There is no reason to expect that we can pass between these choices using virtual Roseman moves. Therefore, if we wish to unambiguously use VBSDs to represent virtual $2$-links, we need to allow one more type of equivalence on top of the virtual Roseman moves. We must also allow ourselves to change the framing of a double point curve that is induced by the surfaces intersecting along the curve, and to pass double point curves through one another.

\begin{definition}
A \emph{graph change} to a VBSD is a change to the diagram which does not change the Gauss code of the double point curves, but which either passes a double point curve through another double point curve (or through itself), or changes the framing induced by the two intersecting surfaces along the double point curve.
\end{definition}

\begin{conjecture}
Virtual broken surface diagrams modulo the surface diagram Roseman moves, virtual Roseman moves, and graph changes represent virtual $2$-links.
\end{conjecture}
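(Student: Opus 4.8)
\noindent\emph{Proposed approach.} The plan is to prove the conjecture exactly as Kamada and Kamada proved the corresponding statement for $n=1$: construct a pair of maps $\phi$ and $\psi$ between $V_2/\!\cong$ and the set $\mathcal{D}$ of virtual broken surface diagrams modulo the three families of moves, and show that they are mutually inverse bijections. The map $\psi\colon\mathcal{D}\to V_2/\!\cong$ is the more straightforward one. Given a VBSD $D$ on $S^3$, take a closed regular neighborhood $U$ of the underlying immersed surface together with its double point curves, triple points and umbrella points. Near each \emph{virtual} double point curve the pair $(U,D)$ is a thickened transverse intersection of two sheets, and we replace this local picture by two disjoint slabs, each carrying one of the sheets --- this is the one-dimension-higher analogue of the archification of Fig.~\ref{arch}, now performed along a curve instead of at a point. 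Doing this along every virtual double point curve turns $(U,D)$ into a genuine broken surface diagram on a compact $3$-manifold $F$, hence a $2$-link $L\subset F\times[0,1]$, and we set $\psi(D)=[F\times[0,1],L]$. The map $\phi$ goes the other way: pick a general-position representative $(F\times[0,1],K)$ by Theorem~\ref{Rthm}, let $d=\pi(K)$ be its broken surface diagram on $F$, record the framed spatial graph $\Gamma\subset F$ formed by the double point curves of $d$ (with their triple and umbrella points and the framing induced by the intersecting sheets), re-embed $\Gamma$ in $S^3$, and fill in the sheets and faces of $d$ over this re-embedding, marking as virtual every double point curve created during the filling; then $\phi[F\times[0,1],K]$ is the class of the resulting VBSD in $\mathcal{D}$.

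The core of the argument is the well-definedness of each map, which breaks into the same steps as for $n=1$ but must be re-run one dimension up. For $\psi$: (i) a virtual replacement of a subsurface $\Sigma$ alters $(U,D)$ only in its virtual part, which after archification consists of disjoint slabs carrying no new double points or faces, so $\psi$ is unchanged --- the exact analogue of the identity $g(C)\cong g(C')$ used when $n=1$; combining this with the preceding theorem (equating ``Roseman plus virtual replacements'' with ``classical plus virtual Roseman moves'') gives invariance under virtual Roseman moves. (ii) A classical Roseman move is supported inside an embedded broken surface diagram on a disk, whose archified neighborhood is untouched away from the move and whose modification is realized by an isotopy of $L$ in the fiber direction, so $\psi$ descends over classical Roseman moves. (iii) A graph change --- passing a double point curve through another (or through itself) or altering the framing induced along it --- should likewise be realized by an isotopy of $L$ in $F\times[0,1]$, exploiting the room in the $[0,1]$-factor and, if necessary, enlarging $F$ (a virtual equivalence); this requires a careful local model near the double point curves, triple points and umbrella points in play.

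For $\phi$ one checks independence of: the general-position isotopy of $K$ (this is precisely completeness of the Roseman moves); the choice of representative within the $\cong$-class (an embedding $F\hookrightarrow F'$ or an isotopy of $L$ changes $d$ only by ambient isotopy in $F'$ and by adding or deleting components of $F$ disjoint from $\pi(K)$, none of which affect $\Gamma$); the isotopy class and framing used when re-embedding $\Gamma$ in $S^3$ (different choices differ by graph changes); and the sheet-and-face filling over a fixed copy of $\Gamma$ (different fillings differ by virtual replacements, hence by virtual Roseman moves). That $\psi\circ\phi=\mathrm{id}$ follows because archifying the reconstructed VBSD yields a $3$-manifold deformation-retracting onto $\pi(K)$, i.e. the abstract $2$-link of $K$, which represents $[F\times[0,1],K]$ by Remark~\ref{abstractknot}; and $\phi\circ\psi=\mathrm{id}$ because the VBSD built from $\psi(D)$ has the same double point curves, triple points, umbrella points, sheets and faces as $D$, so reconstructing it in $S^3$ returns $D$ up to graph changes and virtual Roseman moves.

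The main obstacle, and the reason this is stated as a conjecture, is the framed-graph ambiguity with no analogue in dimension $1$: one must establish both that \emph{every} graph change is induced by a virtual equivalence of abstract $2$-links (so that $\psi$ truly descends, step (iii) above) and --- the harder direction --- that the three families of moves are \emph{complete}, meaning any two VBSDs whose associated $2$-links are virtually equivalent are joined by a finite sequence of classical Roseman moves, virtual Roseman moves, and graph changes. For $n=1$ completeness is proved by sliding the finitely many crossings into a standard configuration and then connecting the intervening arcs by virtual replacements; here the ``crossings'' are replaced by a framed spatial graph $\Gamma\subset S^3$ whose isotopy type and framing are genuine extra data, and an isotopy of $L$ upstairs can drag double point curves through one another and through triple and umbrella points in ways whose projection is not visibly a composite of the listed local moves. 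Controlling this projection --- equivalently, deciding whether graph changes alone suffice or whether further moves tying the triple-point and umbrella-point structure to the graph structure are also required --- is where I expect the real difficulty to lie.
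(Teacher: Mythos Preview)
The paper does not prove this statement: it is explicitly labeled a conjecture and left open. There is therefore no paper proof to compare your proposal against. What the paper does provide is exactly the heuristic you have reconstructed --- construct a VBSD from $(F\times[0,1],K)$ by placing the framed double-point graph in $S^3$ and filling in sheets, note that the placement of $\Gamma$ is ambiguous up to isotopy and framing, and introduce graph changes precisely to absorb that ambiguity --- and then stops, recording the representation claim as a conjecture together with the companion conjecture that graph changes are genuinely new moves.

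Your outline is a faithful lift of the Kamada--Kamada argument to $n=2$, and you correctly locate the obstruction: step~(iii) for $\psi$ (that every graph change is induced by a virtual equivalence) and, more seriously, completeness (that an isotopy of $L$ in $F\times[0,1]$ projects to a finite composite of the three listed move families). Neither of these is established in the paper, and your final paragraph is an accurate diagnosis of why. One point to be cautious about in your step~(iii): a graph change that passes one \emph{classical} double point curve through another is a change to the projected image $\pi(L)$ in $F$, not obviously to $L$ itself via the $[0,1]$-factor; realizing it may require not just enlarging $F$ but genuinely re-embedding the abstract link in a different $F'$, and verifying that the resulting $(F'\times[0,1],L')$ is virtually equivalent to the original is itself nontrivial. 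This is consistent with the paper's decision to leave the statement as a conjecture.
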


\begin{conjecture}
Graph changes cannot in general be obtained using Roseman moves and virtual Roseman moves.
\end{conjecture}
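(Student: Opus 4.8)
\medskip
We sketch a possible approach to this conjecture. The plan is to attach to a virtual broken surface diagram $D$ on $S^{3}$ the \emph{framed spatial graph of its double point set}: let $\Gamma(D)\subset S^{3}$ be the union of the classical (non-virtual) double point curves of $D$, regarded as a graph with vertices at the triple points and with umbrella points as univalent ends, and decorate each edge with the framing induced by the two sheets meeting along it. We consider $\Gamma(D)$ up to ambient isotopy of $S^{3}$ together with a list of \emph{stabilization moves} --- adding or deleting a split, $0$-framed, unknotted circle, and the local modifications that a single classical Roseman move produces on $\Gamma$ inside its supporting ball (a triple-point rearrangement, a branch-point cancellation, and so on). Denote the resulting equivalence class by $[\Gamma(D)]$. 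One would then try to prove (a) that $[\Gamma(D)]$ is invariant under the classical and virtual Roseman moves, and (b) that there is a graph change $D\rightsquigarrow D'$ with $[\Gamma(D)]\neq[\Gamma(D')]$; together these give the conjecture, since a graph change that realizes such a $D'$ cannot then be reproduced by Roseman and virtual Roseman moves.

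For (a), the virtual moves are the easy case: by the theorem preceding this conjecture, a virtual Roseman move is a classical Roseman move one of whose disks meets the others only in virtual crossings, so it changes only virtual double point curves and leaves $\Gamma(D)$ fixed up to ambient isotopy; the same applies to virtual replacements. For the seven classical Roseman moves one proceeds move by move: each is supported in a ball $B$, the portion of $\Gamma(D)$ outside $B$ is untouched, and inside $B$ one must verify that the double point arcs and their induced framings before and after are related by an isotopy rel $\partial B$ or by a birth/death of a $0$-framed unknotted circle. The ``finger'' move furnishes exactly the split-unknot stabilization; the triple-point moves rearrange three arcs through $B$ in the manner of the third Reidemeister move; the branch-point moves create or cancel a pair of univalent ends of an arc.

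For (b) one wants a VBSD in which some double point curve can be re-embedded or re-framed non-trivially without disturbing the Gauss code. This requires the over- and under-sheets along that curve to carry enough topology: a double point curve lying in the interior of a disk sheet is forced to be unknotted with trivial induced framing. One therefore looks to the spinning construction of Section \ref{spinning} and, more generally, to virtual surfaces of positive genus, where a double point curve can run around a handle; for such a $D$, a graph change that links two double point curves, or that alters the framing along one of them, should change $[\Gamma(D)]$, since split unions with a Hopf link (or circles of differing framing) ought to remain inequivalent to split unions with an unlink even after the stabilization moves. Carrying out this step rigorously --- exhibiting the surface, the curve, and the graph change, and checking that the change genuinely survives stabilization --- is itself a nontrivial piece of the argument.

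The main obstacle, however, is step (a), and specifically the triple-point and branch-point moves: it is not clear that these preserve induced framings, and, more seriously, one must rule out the possibility that an intricate \emph{sequence} of classical and virtual Roseman moves, running through intermediate diagrams with many triple and umbrella points, could combine to re-embed or re-frame a double point curve in a way that no single move can. Choosing the stabilization equivalence coarse enough that every move preserves it, yet fine enough that it still separates, say, an unlink of double point circles from a Hopf link of them, is the crux, and is why this is stated as a conjecture rather than proved.
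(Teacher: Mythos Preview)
The paper does not prove this statement: it is explicitly labeled a \emph{conjecture} and is left open, with no argument or sketch provided. There is therefore no ``paper's own proof'' against which to compare your proposal.

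What you have written is not a proof either, and you say as much in your final paragraph. It is a reasonable outline of a possible strategy --- define a framed-spatial-graph invariant of VBSDs, check it is preserved by the (virtual) Roseman moves, and exhibit a graph change that alters it --- but both steps remain genuinely open. In particular, your own discussion identifies the real difficulty: the stabilization equivalence on $\Gamma(D)$ must be coarse enough to absorb every local effect of the seven Roseman moves (including the branch-point and triple-point moves, whose effect on framings you do not verify) yet fine enough to distinguish a Hopf-linked pair of double point circles from an unlinked pair. You have not established that such an equivalence exists, nor have you produced a concrete VBSD and graph change realizing step~(b). As a research program this is sensible and goes beyond what the paper offers, but it should be presented as a heuristic toward the conjecture, not as a proof.
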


Note that if true, these conjectures would imply that our approach to virtual $2$-links gives a distinct theory from that given by Schneider in \cite{Schneider}, as the latter definition does not involve graph changes. Thus, our geometric generalization of virtual links to virtual $2$-links would be distinct from Schneider's combinatorial generalization. On the other hand if this conjecture is false, this would imply that the purely combinatorial definition given by Schneider is equivalent to our geometric definition. In either case, a deeper understanding of these graph changes would be beneficial to further research on representing virtual $2$-links with virtual broken surface diagrams.

%
\section{Fox-Milnor Movies}
\label{s-Fox-Milnor}
%

For surface links in $\mathbb{R}^{4}$, it is useful to consider descriptions in terms of a foliation of $\mathbb{R}^4$ into $\mathbb{R}^3 \times \mathbb{R}$. Such a description can be applied to a surface in a $4$-sphere by deleting one point from the sphere. Suppose that we consider $\mathbb{R}^4$ to have coordinates $(x, y, z, t)$, and suppose that the function $h(x,y,z,t)=t$ is a Morse function when restricted to a given surface link $L$. Then for generic values of $t$, the intersection of $L$ with the hyperplane of constant $t$ will be a classical link in three dimensions. At critical points of the Morse function, we will see the birth or death of a circle, or a crossing of two lines corresponding to a saddle point for the surface. A \emph{Fox-Milnor movie} of $L$ is the sequence of links and singular links ordered by their $t$-values and marking which arcs are involved in the births, deaths, and saddle point crossings. There will be a finite set of links and singular links, since the Morse function has finitely many critical levels. From such a movie, it is possible to reconstruct $L$. For such movies, Carter and Saito give a complete set of moves relating any two Fox-Milnor movies of isotopic links, \cite{CS}.

We may generalize this notion to elements of $V_2$ in a straightforward way by considering, for any pair $(F\times [0,1], L)\in V_2$, a Morse function $h_0$ on $F$, which defines a Morse function $h:F\times [0,1]\rightarrow \R$ by $h(x, t)=h_0 (x)$ for any $(x, t)\in F\times [0,1]$. If we perturb this Morse function so that its level sets have at most one critical point, then generically, the level sets of $h$ will be elements of $V_1$. Critical levels will correspond to either a change in $L$, during which we add or remove a trivial component to the element of $V_1$ or encounter a saddle point on the diagram $\pi(L)$, or will correspond to a change in the topology of the slice of $F$. Such objects, although their use as a generalization of virtual links in higher dimensions was not studied, were used in \cite{StableEq} to study notions of cobordisms and link homology for virtual links. These definitions admit straightforward generalizations to arbitrary dimensions.

\begin{definition}
Let $(F_i\times [0,1], L_i)$, $i=1, 2$, be elements of $V_n$. These elements are \emph{(virtually) link homologous} iff there exists an element $(F\times [0,1], L)\in V_{n+1}$ with a Morse function $h:M\rightarrow \R$ such that $(F_i\times [0,1], L_i)=h^{-1}(i)$. If in addition $L$ is diffeomorphic to $L_i\times [0,1]$, these elements are \emph{(virtually) link concordant}.
\end{definition}

\begin{theorem}
Let $(F_i\times [0,1], L_i)$, $i=1, 2$, be virtually equivalent elements of $V_n$. Then they are virtually link concordant.
\end{theorem}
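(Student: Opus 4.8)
The plan is to decompose a virtual equivalence into its two generating moves, realise each by an explicit concordance movie, and compose. Recall that $\cong$ on $V_n$ is generated by (i) ambient isotopy of $L$ inside a fixed $F\times[0,1]$ and (ii) the relation $\sim$: the existence of a codimension-zero embedding $f\colon F_1\hookrightarrow F_2$ with $(f\times\mathrm{id}_{[0,1]})(L_1)=L_2$. First I would record that virtual link concordance is an equivalence relation on $V_n$: reflexivity is the product movie, symmetry is reversal of the movie, and transitivity is stacking two movies along their common level set $\cong(F\times[0,1],L)$ and smoothing the two Morse functions across it --- inserting trivial product pieces if needed so that each gluing occurs along a full boundary face of the ambient $\mathcal F\times[0,1]$, hence is a boundary-collar modification preserving the product form. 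Since stacking glues a copy of $L\times[0,1]$ to a copy of $L\times[0,1]$ along a copy of $L$, the concordance condition $\mathcal L\cong L_i\times[0,1]$ survives composition. Thus it suffices to build, for each of the two moves, a concordance whose link cross-section is constantly $\cong L_i\times[0,1]$.

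For an isotopy move, write $L_2=\Phi_1(L_1)$ for an ambient isotopy $\{\Phi_t\}_{t\in[0,1]}$ of $F\times[0,1]$ with $\Phi_0=\mathrm{id}$ supported away from the boundary. I would take $\mathcal F:=F\times[0,1]$, $M:=\mathcal F\times[0,1]_t$, and let $\mathcal L$ be the trace $\{(\Phi_t(x),t):x\in L_1,\ t\in[0,1]\}$, an embedded $(n+1)$-submanifold of $M$ diffeomorphic to $L_1\times[0,1]$. The rescaled projection $M\to[1,2]$, $(y,t)\mapsto 1+t$, is a Morse function with no critical points whose level sets at $1$ and $2$ are exactly $(F\times[0,1],L_1)$ and $(F\times[0,1],L_2)$, so this is the required concordance.

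The $\sim$ move is the substantive case, since the level sets must now pass from $F_1\times[0,1]$ to the possibly non-diffeomorphic $F_2\times[0,1]$. After enlarging $F_2$ by an exterior collar (a harmless further $\sim$ move) I may assume $f(F_1)=:F_1'$ is a codimension-zero submanifold of $\mathrm{int}\,F_2$ with $F_1'\cong F_1$; then $F_2$ is obtained from $F_1'$ by attaching a finite sequence of handles in $\overline{F_2\setminus F_1'}$, while the link stays within $F_1'\times[0,1]$. Composing concordances, it is enough to realise a single such handle attachment $F\leadsto F^+:=F\cup_\phi H$, with $L\subset F\times[0,1]_s$. Choose a smooth $\psi\colon F^+\to[3/2,2]$ with $\psi\equiv3/2$ on $F$ and $\psi>3/2$ on the interior of $H$, and a smooth $g\colon[3/2,2]\to[0,1)$ with $g(3/2)=0$ and $g>0$ on $(3/2,2]$. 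Set $\mathcal F:=F^+\times[0,1]_s$, $M:=\mathcal F\times[0,1]_t$, $\mathcal L:=L\times[0,1]_t\cong L\times[0,1]$ (the link is never moved), and $h\colon M\to[1,2]$, $h(y,s,t):=1+t+(1-t)\,g(\psi(y))$. Then $\partial_t h=1-g(\psi)>0$ everywhere, so $h$ has no critical points --- it is vacuously Morse --- while one checks $h^{-1}(1)=F\times[0,1]_s\times\{0\}\cong(F\times[0,1],L)$ and $h^{-1}(2)=F^+\times[0,1]_s\times\{1\}\cong(F^+\times[0,1],L)$, the topology change of the intermediate level sets being carried entirely by the corners of $M$. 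As $\mathcal L$ meets these two level sets in copies of $L$, this realises one handle attachment; composing over the handles of $F_2$ completes the $\sim$ case and, with it, the proof.

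I expect the $\sim$ step to be the crux: one must interpolate between two non-diffeomorphic level surfaces while leaving the knotted piece untouched, which is exactly what the above $h$ does, by flushing the handle-attachments of $F_2$ over $F_1$ into a region the link never enters. The remaining points --- that the corners of $F_2\times[0,1]$ cause no trouble (per the conventions of Section~\ref{s-geom-def-virt}), and that stacking movies keeps us within $V_{n+1}$ --- are routine.
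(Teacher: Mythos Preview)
Your proposal is correct and follows the same strategy as the paper's proof: decompose the virtual equivalence into its generating moves (isotopy of $L$ and the relation $\sim$) and realise each by an explicit concordance. The paper's own proof is a one-sentence sketch (``any virtual equivalence can be broken down into a series of surgeries, excisions, and gluings on the underlying manifold $M_i$ together with isotopy of $L_i$''), so your argument supplies the constructions the paper omits.

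One small remark on bookkeeping: in your $\sim$-step you declare the bundle direction of the $V_{n+1}$ element to be $t$, but your Morse function $h(y,s,t)=1+t+(1-t)g(\psi(y))$ is independent of $s$ and depends on $t$. In the spirit of the paper's Fox--Milnor discussion (where the Morse function is pulled back from the base $F$, so that level sets inherit the product structure $F_i\times[0,1]$ with the \emph{same} $[0,1]$ factor), it is cleaner to take $s$ as the bundle direction and $\mathcal F=F^+\times[0,1]_t$ as the base; then $h$ factors through $\mathcal F$ as intended and the level sets $h^{-1}(c)$ are automatically of the form $A_c\times[0,1]_s$. This is only a relabelling and does not affect the substance of your argument.
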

\bpr The proof for $V_1$ may be found in \cite{StableEq}. In general, any virtual equivalence can be broken down into a series of surgeries, excisions, and gluings on the underlying manifold $M_i$ together with isotopy of $L_i$.\epr

It is also possible to express virtual broken surface diagrams in $\R^3$ by using Fox-Milnor movies of virtual $1$-link diagrams, together with a foliation of $\R^3$ by planes. See also the discussion in \cite{Kauffv}.

\section{Yoshikawa Diagrams}
\label{s-Yoshikawa}
%

In this approach to describing knotted surfaces in $4$-space, we consider Yoshikawa diagrams, sometimes called ch-diagrams or hyperbolic splittings, which are link diagrams where in addition to over and undercrossings, we can also have transverse intersections with an A-smoothing direction marked on them, such that the links obtained by performing all marked A (or B) smoothings results in an unlink. We will define a \emph{virtual Yoshikawa surface link} to be a Yoshikawa diagram (allowing virtual crossings) for which the A and B smoothed links are virtual unlinks. We may take these diagrams modulo Yoshikawa's moves and virtual replacement (which for a virtual link means to replace a segment of a curve with only virtual crossings by a different segment with the same endpoints and only virtual crossings) to define virtual Yoshikawa surface links.

A Yoshikawa diagram defines a (perturbed) Fox-Milnor movie by and, hence, an embedding of a surface into $\mathbb{R}^{4}$ by interpreting the A smoothing as the $t=0$ level set, the B smoothing as the $t=2$ level set, and the knotted graph itself as the $t=1$ level set, assuming that all saddle points occur at $t=1$. To make this into a true Fox-Milnor movie it is necessary to then perturb the saddle points slightly to ensure that they occur at different levels so that $t$ is in fact a Morse function. It is possible to express every knotted surface using a Yoshikawa diagram, \cite{CKS}.

Yoshikawa introduced a set of moves for Yoshikawa diagrams which were conjectured to be complete, meaning that two Yoshikawa diagrams are conjectured to represent the same surface iff they are related by some finite sequence of these moves. These moves consist of the usual Reidemeister moves together with those shown in Fig. \ref{Ymoves}.

\begin{figure}
		\centering
			\includegraphics[scale=0.3]{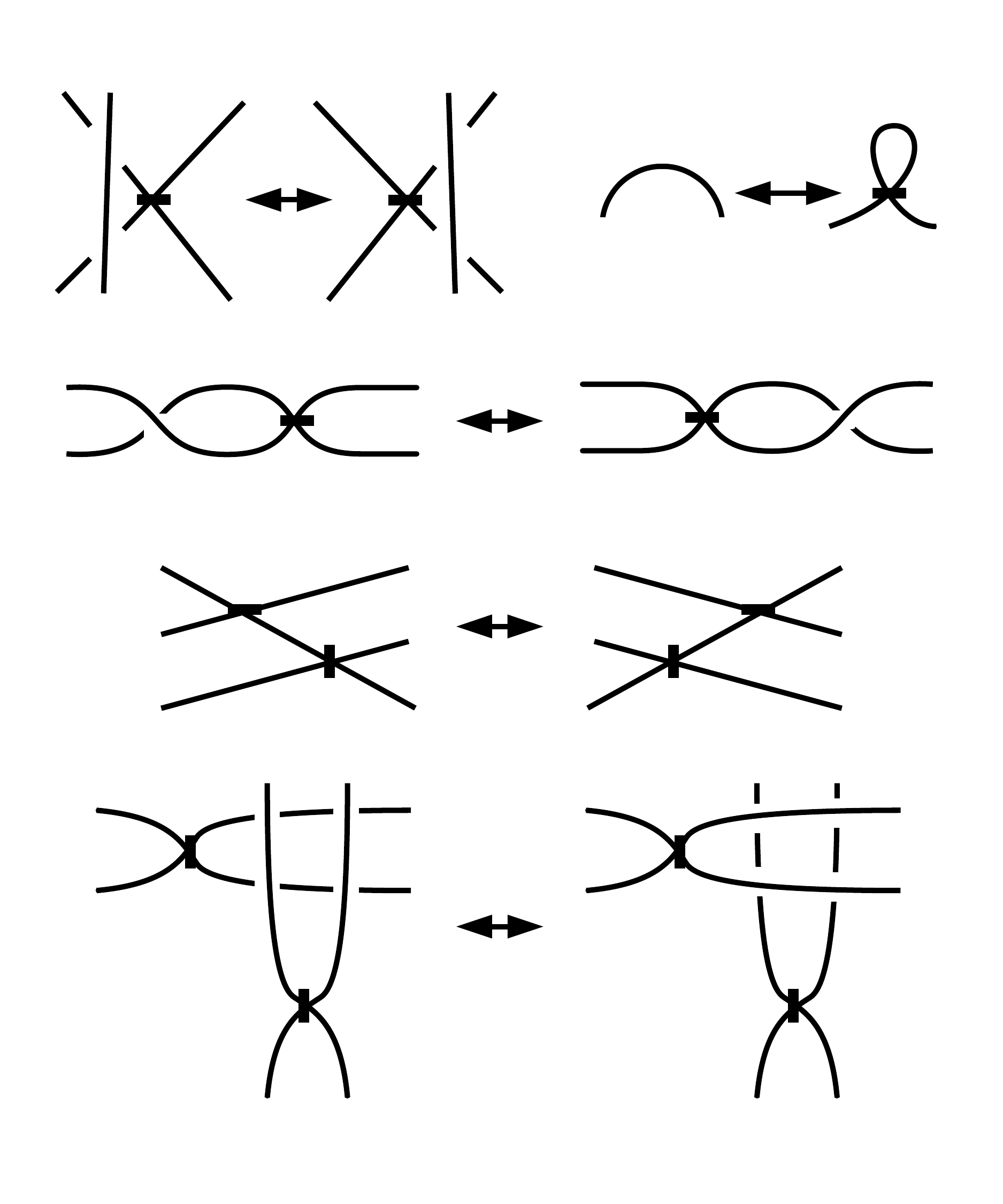}
		\caption{The additional moves permitted for a Yoshikawa diagram.}
		\label{Ymoves}
\end{figure}

By allowing virtual link diagrams to have marked crossings, and allowing all the virtual Reidemeister moves together with the moves shown in Fig. \ref{Ymoves}, we may use such diagrams to encode virtual Fox-Milnor movies for virtual surfaces, and thus to represent virtual broken surface diagrams in $\R^3$. These diagrams may be regarded as easier to utilize for some purposes, since they consist simply of singular virtual links with marked singularities. However, it is not clear whether these moves are complete, i.e. whether two virtual Yoshikawa diagrams which represent the same virtual $2$-link will necessarily be related by some sequence of these moves. Since these moves do not take into account the graph changes to the double point sets, it seems unlikely that they would suffice. Nor is it clear how graph changes could be incorporated into this formalism. Further study on such diagrams for virtual surfaces would be very helpful in understanding whether they could be a useful tool.

\begin{remark}
Since the combinatorial equivalence relation on virtual broken surface diagrams given by Schneider in \cite{Schneider} does not involve graph changes, it seems more likely that the Yoshikawa moves might provide a complete set of moves up to Schneider's stricter equivalence relation on VBSDs.
\end{remark}
%
\section{Realizability of Surface Gauss Codes}
%

Given a (classical) Gauss code, it is possible to determine whether or not this is the Gauss code of some knot diagram on the plane.

By a \emph{graph diagram} we mean a link diagram for a link whose components may be $4$-valent graphs instead of circles.

\begin{theorem}
Let $X$ be a surface Gauss code. Then $X$ is the surface Gauss code of some classical broken surface diagram in $\mathbb{R}^{3}$ iff $X$ admits a Morse function such that every level set has a Gauss code which is realizable as a graph diagram on the plane.
\end{theorem}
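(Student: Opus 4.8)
The plan is to prove the two implications by comparing horizontal slices of a broken surface diagram in $\R^{3}=\R^{2}\times\R$ with level sets of a Morse function on the abstract surface $X$.

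For the forward implication, suppose $X$ is the surface Gauss code of a broken surface diagram, realized by an immersion $\iota\colon X\to\R^{3}$ with normal crossings carrying the pairing, co-orientation, cusp and triple-point data of $(X,D)$ together with the over/under marking. Writing $t$ for the last coordinate on $\R^{3}=\R^{2}\times\R$, a small isotopy of $\iota(X)$ makes $t|_{\iota(X)}$ a Morse function and makes $t$ generic with respect to every stratum of $\iota(X)$ (the double point curves, the triple points, and the umbrella points); these are finitely many transversality conditions. Set $g:=t\circ\iota\colon X\to\R$. For each $c$ we have $g^{-1}(c)=\iota^{-1}(\R^{2}\times\{c\})$, and $\iota$ restricts to a generic immersion of this compact $1$-complex into the plane $\R^{2}\times\{c\}$ --- an ordinary link diagram for regular $c$, and a $4$-valent graph diagram at the exceptional (saddle, triple-point, or umbrella) values. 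By construction the classical Gauss code of this planar diagram is exactly the Gauss code carried by $g^{-1}(c)$: the pairing of the points of $g^{-1}(c)\cap D$ and their signs and co-orientations are read off from the self-intersections of $\iota$ along $D$, as in Fig.~\ref{gausssurface}. Hence every level set of $g$ has a Gauss code realizable as a graph diagram in the plane.

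For the converse we reverse this construction: given a Morse function $g$ on $X$ with the stated property, we build a broken surface diagram in $\R^{3}$ level set by level set. After a small perturbation of $g$ --- which does not change the combinatorial Gauss code of any regular level and hence does not affect the hypothesis --- we may assume $g$ has pairwise distinct critical values and that the critical points of $g$ restricted to each curve of $D$, the triple points of $D$, and the cusps of $D$ lie at regular, pairwise distinct values. This exhibits $(X,D)$ as a movie: finitely many regular level sets, each a $1$-manifold carrying a realizable classical Gauss code, joined by elementary transitions --- births and deaths of plain circles (index $0$ and $2$ critical points of $g$ off $D$), saddles of $X$ (index $1$), and Reidemeister-type moves, cusp creations and triple-point passages on the $1$-dimensional Gauss code (from the remaining exceptional values). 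Realizing each regular level in a horizontal plane $\R^{2}\times\{c\}$ using the hypothesis, taking the product diagram over each regular interval, and splicing in, over a short interval, a standard local model for each transition --- a spherical cap for a birth or death, a band for a saddle, and the elementary surface pieces that realize the Reidemeister, cusp and triple-point moves --- produces an immersed surface in $\R^{3}$ whose source surface is $X$ and whose double point curves recover $D$ with its pairing, co-orientations, cusps and triple points; that is, its surface Gauss code is $X$.

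The delicate point --- and the place where the hypothesis, specifically the allowance of \emph{graph} diagrams rather than only link diagrams, does real work --- is coherence at the critical levels. Building upward, each saddle merges two arcs of the level set below, and the local model for that saddle fits inside a planar disk only if those two arcs are drawn adjacently; likewise the cap, cusp and triple-point pieces must be insertable without forcing the slice diagram to cross itself in a way not already recorded in $D$, which would enlarge the double point set. To arrange this one uses two facts. First, planar isotopy of a slice diagram is ``free'': it is realized by an ambient isotopy of $\R^{2}\subset\R^{3}$ whose track over a short interval is an embedded surface carrying no new double curves, so within a regular interval the slice may be moved about at will. Second, the realizability of the \emph{exceptional} level set $g^{-1}(c_{0})$ as a graph diagram, which holds by hypothesis, already displays the four local branches at a saddle vertex in a cyclic order, and hence --- after smoothing that vertex --- exhibits a planar realization of the level just below in which the two arcs to be merged are co-facial; pushing this realization down through the regular interval below by a free planar isotopy makes the band local. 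Checking that the cap and band pieces can be pushed off in the $\R^{2}$-direction so that they carry no crossing beyond those demanded by $D$, verifying compatibility with the over/under markings, and tracking isotopy classes of the planar realizations so that two consecutive exceptional levels never impose incompatible demands, is the bulk of the work in the converse; the rest is Morse theory on $X$ together with the definition of the surface Gauss code.
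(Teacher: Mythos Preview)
Your argument is correct and follows essentially the same route as the paper: both directions proceed via Fox--Milnor movies, using the height coordinate on $\R^{3}=\R^{2}\times\R$ for the forward implication and reassembling the broken surface diagram slice by slice for the converse. Where the paper's proof is a two-line sketch, you have supplied considerably more detail, in particular identifying and addressing the coherence issue at critical levels that the paper passes over in silence.
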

\bpr Suppose that $X$ is the Gauss code for some classical broken surface diagram in the $3$-hyperplane. Then a Fox-Milnor movie will define the required Morse function. On the other hand given such a Morse function $f$ on $X$, $f$ will define a Fox-Milnor movie each of whose frames is a classical link or graph. This Fox-Milnor movie thus defines a broken surface diagram in $\mathbb{R}^{3}$.\epr

In fact since every realizable broken surface diagram admits a self-indexing Morse function, it follows that a surface Gauss code is realizable iff it admits a self-indexing Morse function whose level sets for $t\neq 1$ are realizable links and which is a realizable knotted graph for $t=1$.

%

%

\section{Welded Links and Links in Higher Dimensions}
\label{s-welded}
%

In this section we will discuss how it may be possible to extend the definition of welded links to higher dimensions. However, we will not give a formal definition here. Instead, this discussion merely indicates some possible avenues for extending the definition.

Recall that, by the work of Satoh, \cite{SS}, welded $1$-links may be interpreted as ribbon embeddings of tori in $S^4$. We review his construction and main results here. Let $W$ be a welded link diagram. From this welded link we can construct a (classical) broken surface diagram by following the prescription illustrated in Fig. \ref{SatohTube}. The collection of tori in $S^4$ corresponding to this broken surface diagram will be denoted $Tube(W)$.

\begin{figure}
		\centering
			\includegraphics[scale=0.8]{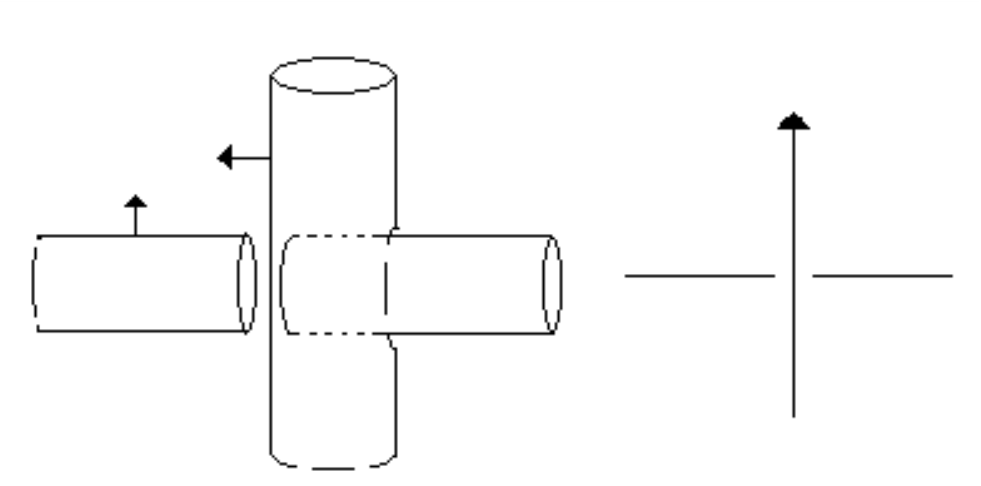}
		\caption{The definition of Satoh's $Tube$ map.}
		\label{SatohTube}
\end{figure}

The following two theorems are shown in \cite{SS}.

\begin{theorem}
If $W$ and $W'$ are welded link diagrams which are welded equivalent (i.e. related by welded Reidemeister moves), then $Tube(W)$ is isotopic to $Tube(W')$.
\end{theorem}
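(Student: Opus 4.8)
The plan is to verify the statement move-by-move. Welded equivalence is generated by a finite list of local moves: the three classical Reidemeister moves, the three virtual Reidemeister moves, the mixed virtual/classical version of the third virtual move from Fig.~\ref{vReidemeister}, and the welded over-commute ``forbidden move'' from Fig.~\ref{forbidden}. Since the $Tube$ construction of Fig.~\ref{SatohTube} is defined locally --- each arc of $W$ is replaced by a tube and each classical crossing by one standard broken-surface piece --- a modification of $W$ supported in a disk changes $Tube(W)$ only inside a ball in $S^4$. Hence it suffices to exhibit, for each generating move, an ambient isotopy of $S^4$ supported in that ball carrying the ``before'' broken surface diagram to the ``after'' one. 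Because Roseman moves generate all such local isotopies of broken surface diagrams, \cite{Roseman2}, it is equivalent, and more convenient, to realize each change by a finite sequence of classical Roseman moves (Fig.~\ref{Rose}).

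First I would dispose of the virtual moves. Under $Tube$ a virtual crossing becomes a configuration in which one tube passes the other with \emph{no} double-point curve at all (the tubes are disjoint, one detouring around the image of the other). Consequently VR1, VR2, VR3 and the mixed move translate into isotopies of \emph{embedded} surfaces near the crossing region, realized by planar isotopy of the tubes together with, at worst, the Roseman move creating or cancelling an unknotted pair of double-point curves. The classical R1, R2, R3 are next: the tube over a single classical crossing is Satoh's standard annulus-through-annulus piece, and one checks picture-by-picture that R1 induces the Roseman move for a double-point curve bounding a disk, R2 induces the double-point-curve cancellation move, and R3 induces the Roseman move involving a triple point. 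Throughout, the co-orientation/framing data marked on the double-point curves (the normal-vector markings of Fig.~\ref{gausssurface}) must be tracked so that over/under information is preserved; for R1--R3 and the virtual moves this bookkeeping is routine.

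The main obstacle, and the heart of Satoh's theorem, is the forbidden over-commute move: two over-strands sharing a common under-strand are interchanged, and under $Tube$ this amounts to sliding one tube across the region where another tube has been ``cut.'' I would handle this by writing the two broken surface diagrams explicitly in a ball neighborhood --- a tube $T_1$ through which a band of $T_2$ and a band of $T_3$ pass --- and showing the two resulting immersed surfaces differ by pushing the $T_2$-band past the $T_3$-band, a composition of the triple-point Roseman move and a double-point-curve passing move. The point is that the cut tube $T_1$ leaves an open product region along its core through which the band can be slid without obstruction, and the co-orientations of the four relevant sheets stay consistent; checking this last point carefully is the delicate part. Finally, since each welded Reidemeister move from $W$ to $W'$ is realized by Roseman moves, and Roseman moves on a broken surface diagram correspond to ambient isotopy of the represented surface in $S^4$, composing over the finite sequence yields an ambient isotopy from $Tube(W)$ to $Tube(W')$, as claimed.
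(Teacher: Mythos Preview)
Your proposal is correct and follows essentially the same approach as the paper, which simply records that ``this theorem is proved by simply checking each welded Reidemeister move, and showing that there is a corresponding isotopy of the surface in $S^4$'' with a citation to Satoh, \cite{SS}. Your plan elaborates that one-line sketch by proposing to realize each local isotopy via Roseman moves on the broken surface diagram, which is exactly how Satoh's verification is carried out in practice; the only quibble is that your description of the forbidden-move case (``a tube $T_1$ through which a band of $T_2$ and a band of $T_3$ pass'') slightly obscures the actual picture --- the point is that the over-strand becomes the \emph{outer} tube, so it can be slid freely past the nested (virtual) configuration --- but the strategy is sound and matches the cited argument.
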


This theorem is proved by simply checking each welded Reidemeister move, and showing that there is a corresponding isotopy of the surface in $S^4$.

\begin{theorem}
For all ribbon tori $K$ in $S^4$, there is a welded knot $W$ such that $Tube(W)=K$.
\end{theorem}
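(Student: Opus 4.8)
The plan is to read a welded knot diagram directly off a ribbon presentation of $K$. Recall that $K$ being a ribbon torus means it bounds an immersed solid torus $V\cong S^1\times D^2$ in $S^4$ whose singularities are ribbon singularities: the double-point set of the immersion, pulled back to the domain, is a finite disjoint union of pairs of disks $(\delta_i^0,\delta_i^1)$ with $\delta_i^0$ properly embedded ($\partial\delta_i^0\subset S^1\times\partial D^2$) and $\delta_i^1$ embedded in the interior of $V$. Geometrically each such pair records a ``finger'' $B_i\subset V$ that is pushed through a meridional disk of $V$ and back out the other side.

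The first and principal step is to isotope $V$ --- and hence $K=\partial V$ --- into a normal form adapted to the projection $p\colon\R^4=\R^3\times\R\to\R^3$. Shrinking $V$ toward its core and straightening the fingers, I would arrange that away from the fingers $p(V)$ is an embedded annular tube running along an arc, while at each finger the tube passes transversally through another part of $p(V)$ in exactly the local picture that appears at a crossing of Satoh's $Tube$ map, with the over/under decoration of the broken surface diagram there dictated by the two $\R$-coordinates. In this normal form $p(K)$ has no branch (umbrella) points, and all of its double-point curves and triple points are of this ``tube-through-tube'' type.

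Granting the normal form, the remainder is essentially bookkeeping. The core of the tube, together with the finitely many points where the tube dives through itself (each decorated by the relevant $\R$-heights) and the remaining double points of a generic planar projection of that core, is by construction the data of a welded diagram $W$: the diving points become classical crossings and the accidental double points of the projection become virtual crossings. Comparing local models --- annular tubes along arcs away from crossings, the Satoh crossing model at each diving point, and, at virtual crossings, disjoint tubes whose projections merely overlap --- shows that $Tube(W)$ is the broken surface diagram one started from, so $Tube(W)$ is isotopic to $K$; since $V$ is connected, $W$ is a welded knot rather than a link.

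The obstacle is thus concentrated entirely in the normal-form step, and that is where the ribbon hypothesis is indispensable. One exploits that each interior disk $\delta_i^1$ is innermost --- no clasps or plumbings interlock the fingers --- to slide each ribbon singularity along $V$ until its finger is short and pierces a single meridional disk transversally, and one uses the irreducibility of the handlebody $V$ to arrange, after this combing, that $V$ is a thin regular neighborhood of its core. This is a general-position and finger-move argument on the immersed $3$-manifold $V$, of the type developed by Yajima and Yanagawa for ribbon $2$-knots and adapted here to the genus-one case; along the way one must track the Euler characteristic to be certain that the closed surface produced really is a torus and not one of higher genus. Everything after the normal form is the routine matching of finitely many local pictures against the definition of $Tube$.
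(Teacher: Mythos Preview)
The paper does not give its own proof of this theorem: it simply states the result and attributes it to Satoh, \cite{SS}. There is therefore nothing in the paper to compare your argument against directly.

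That said, your outline is essentially Satoh's own argument. Satoh phrases it in terms of a base--and--band ribbon presentation rather than the immersed solid torus with ribbon disks, but these are equivalent data, and the heart of his proof is exactly the normal-form step you identify: comb the ribbon singularities so that the broken surface diagram of $K$ consists of tubes along arcs, with each ribbon singularity contributing one tube-through-tube crossing of the kind appearing in Fig.~\ref{SatohTube}. Once that is achieved, reading off the welded diagram is immediate. Your invocation of the Yajima--Yanagawa type finger-move argument is appropriate here and is what Satoh relies on as well.

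One small correction: your final remark about ``tracking the Euler characteristic to be certain that the closed surface produced really is a torus'' is unnecessary. You are isotoping the given torus $K$ throughout; its diffeomorphism type cannot change. An Euler-characteristic check is only needed in the \emph{other} direction --- verifying that $Tube$ applied to a welded knot yields a torus --- which is not what is being proved here.
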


One of the open questions in \cite{SS} is: do there exist welded knots $W, W'$ such that $W$ and $W'$ are not welded equivalent, but $Tube(W)$ is isotopic to $Tube(W')$? There is an infinite family of pairs of such examples given by the author in \cite{BW}. Let $W^\dagger$ denote the welded knot diagram obtained by reflecting the diagram of $W$ across some axis in the plane. Let $K^*$ denote the reflection of a surface in $S^4$. We will use $-$ to indicate reversing the orientation.

\begin{lemma}
For any welded knot $W$, $Tube(-W^\dagger)\cong -Tube(W)^*$.
\end{lemma}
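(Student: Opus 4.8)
The plan is to verify the identity $Tube(-W^\dagger) \cong -Tube(W)^*$ by tracking, step by step, how each of the three operations involved (orientation reversal of the welded knot, reflection of the diagram, and the $Tube$ construction) interacts. The key observation is that $Tube$ is defined locally, crossing-by-crossing, via the prescription in Fig.~\ref{SatohTube}, so it suffices to understand the effect of $\dagger$ and of orientation reversal on each local piece of a welded knot diagram and then compare the resulting local pieces of the broken surface diagram with the corresponding pieces of the reflected, orientation-reversed surface $-Tube(W)^*$.

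First I would set up conventions carefully: fix the reflection axis in the plane used to define $W \mapsto W^\dagger$, and fix the reflection of $S^4$ (say, across a hyperplane $\R^3$ containing the $\R^3$ in which broken surface diagrams live) used to define $K \mapsto K^*$. The natural choice is to take the $S^4$-reflection so that, under the projection $\pi$ used for broken surface diagrams, reflecting the surface corresponds to reflecting its broken surface diagram in $\R^3$, which in turn (for a $Tube$ image, which is a thickening of a link diagram in a plane) corresponds to reflecting the underlying welded diagram. With these choices, reflecting the diagram of $W$ and reflecting the surface $Tube(W)$ are compatible: $Tube(W^\dagger) \cong Tube(W)^*$ up to the question of orientations and over/under data. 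The point of including the orientation reversal and the $-$ on both sides is precisely to correct for the fact that reflection reverses the normal co-orientation of the tube (which encodes the over/undercrossing information in the broken surface diagram) and also reverses the orientation of the core link; reversing the orientation of $W$ swaps the roles consistently so that the over/under markings match up.

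The key steps, in order, are: (1) record how $W \mapsto W^\dagger$ acts on an individual classical crossing of the welded diagram — it preserves the crossing sign if we also account for the reversal of the ambient orientation of the plane, but reverses it as an abstract crossing; (2) record how $W \mapsto -W$ acts on a crossing — reversing the orientation of both strands through a crossing preserves the over/under data and also preserves the sign; (3) combine (1) and (2) to see that $-W^\dagger$ has, at each crossing, the same over/under datum as the mirror-reflected-then-orientation-reversed picture demands; (4) apply the local $Tube$ prescription to both sides and check that the local broken surface pieces agree, including the co-orientation data that records over/under for the surface; (5) check the virtual crossings, which are easier since $Tube$ sends them to simple non-intersecting tube passings and reflection/orientation-reversal act transparently; (6) assemble the local matchings into a global diffeomorphism of surfaces in $S^4$ respecting the broken surface structure, hence (by Theorem, the completeness of Roseman moves, or simply by direct construction) an isotopy $Tube(-W^\dagger) \cong -Tube(W)^*$.

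The main obstacle I expect is bookkeeping of orientations and co-orientations: there are three independent sign-type data in play (the orientation of the ambient plane/$S^3$/$S^4$, the orientation of the link/surface, and the over/under co-orientation of each crossing/double-point curve), and one must check that reflection flips exactly the right subset and that orientation reversal of $W$ compensates for precisely those flips that would otherwise spoil the over/under matching in the $Tube$ image. In particular, one must be careful that the $-$ on the left side (reversing orientation of $W$, a $1$-dimensional object) and the $-$ on the right side (reversing orientation of $Tube(W)^*$, a $2$-dimensional surface) are the correct counterparts of one another under the $Tube$ map; verifying this compatibility of orientation conventions is where the substance lies, whereas the local surface-matching is then a routine inspection of the finitely many local models in Fig.~\ref{SatohTube}.
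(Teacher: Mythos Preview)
Your proposal is correct and is essentially the same approach as the paper's: the paper simply states that the lemma ``is proved in \cite{BW} by checking the resulting diagrams,'' and your plan is precisely such a crossing-by-crossing verification of the local $Tube$ prescription under reflection and orientation reversal. Your outline is in fact considerably more detailed than what appears here, but the underlying method---a direct local comparison of broken surface pieces, with the orientation/co-orientation bookkeeping you flag as the main issue---is exactly what is intended.
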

This is proved in \cite{BW} by checking the resulting diagrams.

\begin{corollary}
Let $W$ be a welded knot which is not $(-)$ amphichiral, i.e. for which $W$ is not welded equivalent to $-W^\dagger$. Then $W, -W^{\dagger}$ provides an example of a pair of welded knots which are not welded equivalent, but whose corresponding ribbon tori under the $Tube$ map are isotopic.
\end{corollary}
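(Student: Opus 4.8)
The statement to prove has two parts: that $W$ and $-W^{\dagger}$ are not welded equivalent, and that $Tube(W)$ and $Tube(-W^{\dagger})$ are isotopic in $S^{4}$. The first part is nothing but the hypothesis unwound: $W$ being not $(-)$ amphichiral means, by definition, that $W$ is not welded equivalent to $-W^{\dagger}$. So the entire content lies in the second part, and the plan is to route it through the preceding Lemma. Since $Tube(-W^{\dagger})\cong -Tube(W)^{*}$, it suffices to establish that $Tube(W)\cong -Tube(W)^{*}$ for every welded knot $W$ — that is, that any ribbon torus produced by Satoh's $Tube$ construction is isotopic to the orientation-reverse of its mirror image. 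Non-amphichirality plays no role in this step; it is needed only to guarantee that the resulting pair is a genuine witness to the non-injectivity of $Tube$ (the converse implication, that welded equivalent diagrams have isotopic tubes, is Satoh's theorem recalled above).

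To prove $Tube(W)\cong -Tube(W)^{*}$ I would exploit the meridional circle built into the $Tube$ construction. By construction $Tube(W)$ is assembled from one annulus $(\text{arc})\times S^{1}$ per arc of $W$, glued along the classical crossings so that the $S^{1}$-factors match, which for a knot exhibits $Tube(W)$ as a (trivial) circle bundle over the resolved underlying curve with a distinguished meridional $S^{1}$-direction. The idea is to produce an ambient diffeomorphism $\tau$ of $S^{4}$ that (a) reverses the orientation of $S^{4}$, so that $\tau(Tube(W))$ is a model for the mirror $Tube(W)^{*}$, while (b) carrying $Tube(W)$ to itself as a set by reflecting each meridional $S^{1}$, and hence (c) reversing the orientation of the torus $Tube(W)$. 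Granting $\tau$, one gets $Tube(W)^{*}\cong\tau(Tube(W))=-Tube(W)$ as oriented surfaces, hence $-Tube(W)^{*}\cong Tube(W)$, which closes the argument. In the model case where $W$ is a classical knot $K$, one has $Tube(K)=Spun(K)=K\times S^{1}$ inside the open-book $S^{4}=(D^{3}\times S^{1})\cup(\partial D^{3}\times D^{2})$, and $\tau$ is the diffeomorphism that conjugates the $S^{1}$ page-parameter on $D^{3}\times S^{1}$ and reflects the $D^{2}$ on the binding side; these agree on the overlap, $\tau$ is orientation-reversing, fixes $K\times S^{1}$ setwise, and reverses its orientation, so (a)--(c) hold visibly. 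The general welded case is the same reflection applied to Satoh's tube, once one records that construction in open-book form. An alternative, purely diagrammatic route: mirroring a surface in $S^{4}=S^{3}\times\mathbb{R}$ amounts to switching every over/under label on the double point curves, so one would instead show that Satoh's broken surface diagram for $W$, with every marking switched and the surface orientation reversed, can be carried back to the original by a finite sequence of Roseman moves, tracking Satoh's recipe crossing by crossing.

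The main obstacle is exactly the construction and verification of $\tau$ (equivalently, the Roseman sequence in the diagrammatic approach): this is a genuine isotopy argument rather than the bookkeeping the Lemma required, and it must genuinely use the tube structure, since a general ribbon torus need not be isotopic to the reverse of its mirror. The care lies in (i) making the meridional $S^{1}$-structure precise when the underlying curve is knotted, so that ``reflect each meridian'' is realized by an honest ambient diffeomorphism of $S^{4}$ and not merely a self-homeomorphism of the torus; (ii) tracking the separate actions of $-$ and of $*$ on the two circle factors to confirm that $\tau$ reverses the surface's orientation; and (iii) the minor point that any two orientation-reversing ambient diffeomorphisms of $S^{4}$ yield isotopic mirror images, so that $\tau(Tube(W))$ may legitimately be identified with $Tube(W)^{*}$. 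Once these are in place, combining $Tube(W)\cong -Tube(W)^{*}$ with the Lemma gives $Tube(W)\cong Tube(-W^{\dagger})$, while the hypothesis supplies the failure of welded equivalence, yielding the stated example.
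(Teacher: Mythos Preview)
The paper provides no proof here, treating the Corollary as immediate from the Lemma, with both results imported from \cite{BW}. You have correctly isolated the piece that is not literally contained in the Lemma: from $Tube(-W^\dagger)\cong -Tube(W)^*$ one still needs $Tube(W)\cong -Tube(W)^*$ for every welded $W$, and your meridional-reflection idea is exactly how this goes. In fact your $\tau$ can be written down uniformly, bypassing the open-book apparatus and the crossing-by-crossing Roseman alternative you sketch: Satoh's broken surface diagram for $Tube(W)$ sits in $\mathbb{R}^3$ symmetrically about the plane $\{z=0\}$ containing the diagram of $W$ (each arc is replaced by the boundary of its normal disk, and the local picture at a classical crossing --- two such tubes meeting --- is mirror-symmetric about that plane, over/under labels included since those record the fourth coordinate). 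Hence $(x,y,z,t)\mapsto(x,y,-z,t)$ is an orientation-reversing diffeomorphism of $\mathbb{R}^4$ (extending to $S^4$) that preserves $Tube(W)$ setwise while sending each meridional circle to itself with reversed orientation, so it reverses the surface orientation. This dispatches your obstacles (i)--(iii) simultaneously. Your analysis is therefore correct and amounts to supplying what the paper defers to the reference.
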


Since there are many welded knots which are not $(-)$ amphichiral, this provides a large collection of examples. However, these welded knots are related by a fairly simple symmetry. Therefore, we may modify the question in \cite{SS} to the following form:

\begin{question}
Are there welded knots $W, W'$ such that $W$ and $W'$ are not welded equivalent \emph{and are not related by symmetries of reversal and mirror images}, but $Tube(W)$ is isotopic to $Tube(W')$?
\end{question}

For classical knots $K, K'$, it is known that $Tube(K)\cong Tube(K')$ iff $K\cong K'$ or $K\cong -K^\dagger$, \cite{Liv, BW}

Satoh's construction suffices to present not only ribbon tori using welded knots, but links whose components are ribbon tori by using welded knots.

\begin{remark}
Satoh constructs a link whose components are ribbon tori in $S^4$ for any welded link. However, the construction can be generalized to define a map $Tube_n$ on welded knots which assigns a ribbon presentation $(B, H)$ for an $n$-knot such that $|B|=|H|$ for any welded knot. Specifically, we let $Tube_n(W)=(B_W, H_W)$ be the ribbon presentation for $Tube(W)$, letting each arc stand for a base, and connecting these bases with handles in the obvious manner. The above results may easily be seen to apply to $Tube_n$ provided $n\geq 2$. $Tube_1$ is of course not well-defined.
\end{remark}

Satoh's work relating welded knots to ribbon knottings of $S^1\times S^{n-1}$ in $S^{n+2}$, $n\geq 2$, provides us with a geometric heuristic for defining higher-dimensional welded knots. As welded knots represent knotted tori in $\R^4$, it is reasonable to attempt to generalize this notion to higher dimensions by considering a Fox-Milnor movie of welded $1$-links. This will present a knotted $3$-manifold in $\R^5$. At singular levels, in which two arcs in the welded link are joined by a $1$-handle, we interpret the singularity as shown in Fig. \ref{Tube2}.

\begin{figure}[htbp]
\begin{centering}
\includegraphics[scale=.5]{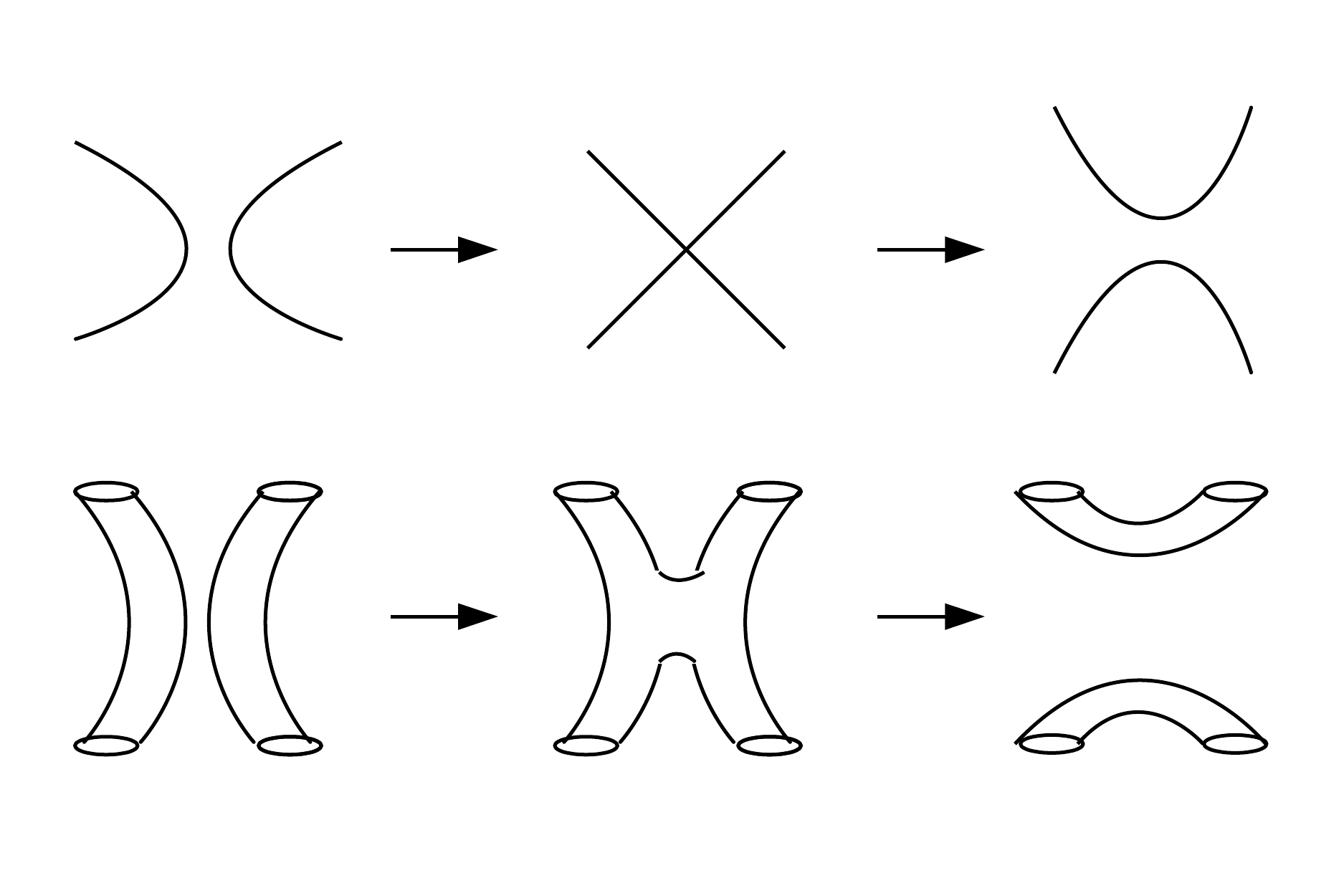}
\caption{The Fox-Milnor movie of a welded $1$-link $K$ shown in a neighborhood of a singular level, and the corresponding interpretation as a Fox-Milnor movie for a $3$-knot, the latter presented as a Fox-Milnor movie of broken surface diagrams.}
\label{Tube2}
\end{centering}
\end{figure}

A Fox-Milnor movie of welded knots can also be interpreted as a kind of broken surface diagram with classical and welded crossings. However, unlike the virtual broken surface diagrams, these welded broken surface diagrams will have triple points at which two classical double point curves meet a welded double point curve. Recall that for virtual broken surface diagrams, every triple point is either the meeting of three classical double point curves or one classical double point curve with two virtual double point curves. Therefore, not every Fox-Milnor movie of welded links will define a Fox-Milnor movie of virtual links. It is also not clear whether the generalized graph changes to a virtual broken surface diagram can be interpreted as isotopies of a $3$-knot in $\R^5$. As such, if we wish to extend the definition of welded knots to higher dimensions in such a way that welded $n$-links still represent $(n+1)$-links with certain geometric properties, it does not appear that welded links can be defined as a simple quotient of virtual links. The fact that for $1$-links, welded links are a quotient of virtual links would then appear as a special case.

It may be the case, then, that in higher dimensions, the geometric generalization of virtual knots which we have given will not be directly related to a generalization of welded knots that respects Satoh's geometric interpretation of welded $1$-links. More work is needed to determine how best to formally extend the notion of a welded knot to higher dimensions.

\section{Conclusion and Further Problems}
\label{s-conc}
%

We have explored a method for generalizing virtual links and tangles to higher dimensions, using the geometric definition for virtual links given in \cite{StableEq}. From this we have established that such higher-dimensional virtual knots have many invariants which are extensions of invariants for classical links. However, it is not true in higher dimensions that classical links embed into virtual links: we have given examples of virtually knotted tori which are classically distinct but virtually equivalent.

There are a number of questions which can be asked about these virtual $n$-links and tangles. We list here a few which may be of potential interest.

\begin{question}
Generalize Langford's and Baez's, \cite{Lang, BL}, categorical description of $2$-tangles to virtual $n$-tangles.
\end{question}

Langford and Baez, \cite{Lang, BL}, have given a description of classical $2$-tangles as a certain kind of $2$-category. It is reasonable to expect that a similar categorical description would apply to virtual $2$-tangles. In addition to being an interesting problem in itself, we may expect that such a categorical description might be useful in studying Khovanov homology of virtual knots. More generally, it should be possible to describe $n$-links and virtual or welded $n$-links as some type of $n$-category.

For $1$-links, the study of virtual links through tangles can be used to define certain quantum invariants of virtual links. These may relate to the case of higher $n$ as well.

\begin{question}
Find an example of a pair of knotted $n$-spheres in $D^{n+1}\times [0,1]$ which are not isotopic but which are virtually equivalent, or prove that no such examples exist.
\end{question}

The examples which we have given all involved knotted tori in $S^4$. We conjecture, however, that there should be examples involving knotted $2$-spheres as well. As mentioned in our discussion of the examples of tori, the knotted sphere examples given by Gordon in \cite{Gor} may provide a good starting place for investigating this conjecture. However, since many invariants of classical knots extend to invariants of virtual knots in all dimensions, finding such examples is nontrivial.

\begin{question}
Find examples of non-trivial virtual $n$-knots for which all homotopy groups $\pi_i(h(F)-K)$, $i=1... n$, are isomorphic to those of the $h(D^{n+1})-U_n$ (where $U_n$ denotes the trivial knot).
\end{question}

\begin{question}
Extend the definition of biquandle invariants to links of all dimensions.
\end{question}

Given the usefulness of biquandles for virtual $1$- and $2$-links (see e.g. Corollary \ref{2Kish}), it is reasonable to think that such an invariant may have similar applications for virtual $n$-links in general. We conjecture that these invariants might be able to be extended inductively to higher dimensions by using a technique like Fox-Milnor movies. Alternately, it may be possible to check the Roseman moves directly, although since the number of Roseman moves increases rapidly with $n$, it may not be possible to use this approach to give a proof for general $n$.

\begin{question}
Give a formal definition of welded links that applies in any dimension.
\end{question}

\section*{Acknowledgments}

The author wishes to thank Adam Sikora, Jason Manning, and William Menasco for their support and advice.

\end{document}